\newtheorem{assumption}{Assumption}
\newtheorem{lemma}{Lemma}[section]
\newtheorem{theorem}{Theorem}[section]
\newtheorem{prop}{Proposition}[section]
\theoremstyle{definition}
\newtheorem{remark}{Remark}
\theoremstyle{plain}
\newtheorem*{theorem*}{Theorem}
\numberwithin{equation}{section}
\begin{document}
 \title{\textbf{Stochastic homogenization of deterministic control problems}}
 \date{\today}
 \author{Alexander Van-Brunt  \thanks{alexvb@kurims.kyoto-u.ac.jp}}
 \affil{ \textit{Research Institute for Mathematical Sciences},\\
\textit{Kyoto University}}
\maketitle
\begin{abstract}
In this paper we study homogenization of a class of control problems in a stationary and ergodic random environment. This problem has been mostly studied in the calculus of variations setting in connection to the homogenization of the Hamilton-Jacobi equations. We extend the result to the control problems with fairly general state dynamics and macroscopically inhomogeneous Lagrangians. Moreover, our approach proves homogenization under weaker  growth assumptions on the Lagrangian even in the well-studied calculus of variations setting. 
 \end{abstract}

\tableofcontents

\section{Introduction}

We study homogenization of deterministic optimal control problem in random, ergodic, environments with state dynamics beyond the calculus of variations and macroscopically inhomogeneous Lagrangians. The control problem we will be considering is as follows (we refer the reader to \cite{fleming2006controlled} and references therein for general theory on the optimal control problems). Let $(\Omega, \mathcal{F}, P)$ be a probability space and 
\begin{equation}
L\colon \mathbb{R}\times\mathbb{R}^d\times\mathbb{R}^d\times\mathbb{R}^d\times\Omega \to \mathbb{R}
\end{equation}
be a random field, which we call the \emph{Lagrangian}. The Lagrangian $L$ will be assumed to be stationary and ergodic with respect to the translation in the third argument, see Section $1.4$ below for the precise formulation. 
Fix an $x \in \mathbb{R}^{d}$, $t\in (0, T]$ for some terminal time $T>0$.  For a given $\omega \in \Omega$, an $\epsilon > 0$ a terminal cost function $\psi(\cdot)$, define the cost functional
 \begin{equation}
 J_{\epsilon} (t,x,u, \omega):=\int_{t}^{T} L \left(  s,x(s), \frac{x(s)}{\epsilon}, u(s), \omega  \right) ds + \psi(x(T))
\label{cost-intro}
 \end{equation}
over $ u \in \mathcal{U}:=L^{\infty} \left(  [0,T] \rightarrow \mathbb{R}^{d} \right)$, where $x(\cdot)$ is defined by the so-called \emph{state dynamics}
 \begin{equation}
\begin{cases}
\frac{dx}{ds}(s) =f(x(s),u(s)) &\textrm{ for }s\in (t,T],\\
x(t)=x
\end{cases}
\label{statedynamics}
\end{equation}
for a suitable $f\colon \mathbb{R}^d\times\mathbb{R}^d\to\mathbb{R}^d$.
The control problem studies the minimal cost which is called the \emph{value function}:
 \begin{equation}
 V_{\epsilon}(t,x,\omega) := \inf_{u \in \mathcal{U}} J_{\epsilon} (t,x,u,\omega).
 \end{equation}
For a fixed $\epsilon>0$, this value function is random depending on $\omega$. But if we let $\epsilon\to 0$, then quite often some ``averaging'' takes place and $V_{\epsilon}(t,x,\omega)$ converges to a non-random quantity. Proving this convergence, together with the identification of the limit, is called the problem of \emph{homogenization}. 
Thus our objective is to prove, under appropriate assumptions, the existence of an \emph{effective Lagrangian} $\widetilde{L}$ such that
\begin{equation}
V_{\epsilon}(t,x) \rightarrow \widetilde{V} (t,x)\textrm{ as }\epsilon\to 0
\label{hom-intro}
\end{equation}
uniformly on compact sets, where $\widetilde{V}$ is the value function corresponding to the cost functional
 \begin{equation}
 \widetilde{J} (t,x,u)=\int_{t}^{T} \widetilde{L}  \left ( s,x(s), u(s) \right ) ds + \psi(x(T))
 \end{equation}
with the same state dynamics \eqref{statedynamics} as before. 

Under suitable coercivity assumptions, this result automatically implies homogenization of the associated Hamilton-Jacobi equation. Indeed if we define the Hamiltonians
\begin{align}
\mathcal{H} \left(t,x,\frac{x(s)}{\epsilon}, p, \omega\right)
&=\sup_{v \in \mathbb{R}^{d}} \left\{ - f(x,v) \cdot p - L\left( t,x,\frac{x}{\epsilon}, v, \omega \right) \right\},\label{intro-H}\\
 \widetilde{\mathcal{H}} (t,x,p)&= \sup_{v \in \mathbb{R}^{d}} \left\{ - f(x,v) \cdot p - \widetilde{L}(t,x,v) \right\},\label{intro-Hhom}
\end{align}
then it is a well known fact that, under our assumptions presented later, the value functions $V_{\epsilon}$ and $\widetilde{V}$ are the unique viscosity solutions to the following Hamilton-Jacobi equations: 

\begin{align}
-\frac{\partial V_{\epsilon} }{\partial t} +\mathcal{H} \left(t,x,\frac{x(s)}{\epsilon}, D_{x} V_{\epsilon}, \omega\right)&=0,\\
-\frac{\partial \widetilde{V}}{\partial t} +\widetilde{\mathcal{H}} (t,x, D_{x} \widetilde{V})&=0\label{homogenHJB}
\end{align}
subject to the terminal conditions $\widetilde{V}(T,x)=V(T,x)=\psi(x)$. Therefore, the convergence \eqref{hom-intro} implies homogenization of the Hamilton-Jacobi equation. 

 \subsection{Background and earlier results}
 
As mentioned in the introduction, our problem is naturally related to the homogenization of Hamilton-Jacobi equations.  There has been a substantial interest in this topic. The applications of which include large deviations of diffusion process in random environments, front propagation in random media and first passage percolation theory. 
%
 Such a result has been obtained by several authors. In periodic setting it has been studied via reducing it to a cell problem, as first carried out by~\cite{lions1986homogenization} for temporary homogeneous Hamiltonians. The result was later extended to almost periodic case with general Hamiltonians in~\cite{Ish99} by using the so called \emph{perturbed test function method} intiated in~\cite{Eva89}. In the stochastic case, these approaches are no longer possible since the corrector does not exist in general~\cite{LS03}. This case was first addressed in~\cite{souganidis1999stochastic} and~\cite{Rezakhanlou2000} where the authors applied the subadditive ergodic theorems to the control theoretic interpretation of the solution. It is worth mentioning that in order to find an associated Lagrangian, the Hamiltonian is assumed to be convex. In the non-convex setting, it is known that homogenization does not occur in general~\cite{Zil17,feldman2017homogenization}. Later these approaches are further extended to the second order equations with a vanishing viscosity by~\cite{lions2005homogenization} and~\cite{CPA:CPA20137}. There is much recent progress in the homogenization of Hamilton-Jacobi equations and we refer the reader to the introduction in ~\cite{JST17}. 

In this paper, we are interested in homogenization of control problems with general state dynamics. Such a problem naturally appears in financial and engineering applications. All the above works mainly focus on the Hamilton-Jacobi equations and the associated control problems are assumed to follow the simplest state dynamics $f(x,u)=u$, a problem sometimes called the \emph{calculus of variations}. The control problem is only in the background and hence there is no reason to consider other complicated state dynamics. 
Needless to say, one can use the above results to show the convergence of $V_\epsilon$ to $\widetilde{V}$ provided the Hamiltonian~\eqref{intro-H} satisfies the  appropriate conditions. However, through this procedure, the effective Hamiltonian is not necessarily given in the form of the right hand side of~\eqref{intro-Hhom}. If $f$ is non-linear in $v$, then it leaves a highly nontrivial problem of recovering the effective Lagrangian associated with the original state dynamics. 
Another disadvantage of this procedure is that we will end up with a rather implicit assumptions on $L$ and $f$, saying that the Hamiltonian~\eqref{intro-H} satisfies certain conditions. It may not be hard to find a reasonable sufficient conditions but it may well not be the most natural conditions. 

For these reasons, we develop a direct approach to prove the homogenization of control problems with general state dynamics. This allows us to obtain the effective Lagrangian and also leads to explicit assumptions in terms of $L$ and $f$. 
In addition, our approach has two technical merits. First, even in the calculus of variations setting, our assumptions are seen to be less restrictive than the ones in~\cite{souganidis1999stochastic} and~\cite{Rezakhanlou2000}. Also, it gives a straightforward way to include macroscopic inhomogeneity to the Lagrangian. Both~\cite{souganidis1999stochastic} and~\cite{Rezakhanlou2000} assumed the Hamiltonian (and hence Lagrangian) to have the form $H(x/\epsilon,\omega)$. There is a way to extend the results to the macroscopically inhomogeneous cases by using the viscosity solution framework, as carried out in~\cite{lions1986homogenization,lions2005homogenization}, but our approach gives an alternative elementary way. 

We also hope that the approach of this paper can be extended beyond the scope of current setting, such as vanishing viscosity case. 

\subsection{Outline of the proof}

In the general setting, we will make a number of assumptions on the Lagrangian $L$ and state dynamics $f$. In order to motivate them, we shall explain a brief outline of the argument.\\

For simplicity, we shall assume that $\psi(x)=0$ and there exists a Lipschitz continuous optimal control $u^*$. The proof of our result is based on a discretization scheme. Specifically to homogenize the control problem, we wish to use a certain ergodic theorem on the object

\begin{equation}
V_{\epsilon}(t,x,\omega) =\int_{t}^{T} L\left(s ,x(s), \frac{x(s)}{\epsilon}, u^{*}(s), \omega\right) ds .
\end{equation}
However, this is not straightforward. One immediate reason is that, since we have no a priori information on $u$, the object is not stationary in the ``fast variable'' $x(\cdot)/\epsilon$ where we expect the averaging to occur. Another issue is that the stationarity and ergodicity of $L$ is assumed only for fixed $(s,x(s))$.
We may  attempt to cope with this by introducing a discretization. For $\tau>0$ let $t_{0}=t$ and $t_{i}=t_{i-1}+\tau \wedge T$ until the first $i$ such that $t_{i}=T$. This sequence $\{ t_{i} \}_{i=1}^{N}$ defines a partition of $[t,T]$.  Define $\bar{x}(\cdot)$ as the piecewise linear function with $\bar{x}(t_{i})=x(t_i)$ and consider
\begin{equation}
\sum_{i=1}^N \int_{t_i}^{t_{i+1}}L\left(t_{i},x(t_{i}), \frac{\bar{x}(s)}{\epsilon},u^{*}(t_{i}),\omega\right) ds.\label{naive}
\end{equation}
Since $\bar{x}(\cdot)$ is a straight line segment on $[t_i,t_{i+1})$ and the other variables of $L$ are frozen, there is no problem in applying the ergodic theorem on each $[t_i,t_{i+1})$. If there were no fast variable, it is an easy exercise to show that~\eqref{naive} approximates the value function $V_\epsilon$ well. 

%
%
%

If however we applied this reasoning to our case of interest we would only conclude
\begin{equation}
V_{\epsilon}(t,x, \omega) = \sum_{i=1}^N
\int_{t_i}^{t_{i+1}} L\left(t_{i},x(t_{i}),\frac{\bar{x}(s)}{\epsilon},u^{*}(t_{i}),\omega\right) ds +\mathcal{O}(1). 
\label{outline-fail}
\end{equation}
Then we see such an approximation would not be useful. The issue is that no matter how small $|x(s)-\bar{x}(s)|$ is, it will be magnified by $\epsilon^{-1}$ and hence we cannot conclude that the values of the Lagrangian are close. This is not counterintuitive since we expect the control to fluctuate more wildly as the trajectory attempts to navigate through the random environment. 

To address this concern, we instead discretize the control problem in a way which makes no attempt to approximate the controls on the microscopic level, but rather approximates the controls on a macroscopic level and implicitly optimizes over the microscopic environment. This corresponds to introducing the following objects:
\begin{enumerate}
\item For some $i < N$ and  $u\in\mathcal{U}$, let $\widehat{x}(\cdot)$ follow the \emph{frozen} dynamics $\frac{d\widehat{x}(s)}{ds}=f(x(t_{i}),u(s))$ for $s\in[t_i,t_{i+1})$ with $\widehat{x}(t_i)=x(t_i)$.
\item Suppose there exists $\widetilde{u}_{i}$ such that
\begin{equation}
f(x(t_{i}),\widetilde{u}_{i})=\frac{x(t_{i+1})-x(t_{i})}{\tau}
\label{tildeu}
\end{equation}
and let
\begin{equation}
\Pi_{t_{i},x(t_{i}),\tau,\widetilde{u}_{i}}=\left\{ u \in \mathcal{U} \: : \:\int_{t_{i}}^{t_{i+1}} f(x(t_{i}),u(s))ds= \tau f(x(t_{i}),\widetilde{u}_{i}) \right\}. \label{outline-Pi}
\end{equation}
\item Let
\begin{equation} \label{Macrocostfunction}
L_{\tau,\epsilon}(t_{i},x(t_{i}),\widetilde{u}_{i},\omega): = \inf_{u \in \Pi_{t_{i},x(t_{i}),\tau,\widetilde{u}_{i}}}  \int_{t_{i}}^{t_{i+1}} L\left(t_{i},x(t_{i}),\frac{x(s)}{\epsilon}, u(s),\omega \right) ds.
\end{equation}
\end{enumerate}
This definition requires some explanation. The $\widetilde{u}_i$ is superfluous since by substituting its defining equation to \eqref{outline-Pi}, we see that the above is simply the minimal cost to get from $x(t_{i})$ to $x(t_{i+1})$. Its role will become clear later. 
In the special case of the calculus of variations, this $\widetilde{u}_i$ is given simply as $\tau^{-1}\int_{t_{i}}^{t_{i+1}} u^{*}(s)ds$, i.e., the average direction of the trajectory over the time $\tau$. 
In the general case, we will need a convexity assumption on the image of $f(x(t_i),\cdot)$ to guartanee the existence of such an $\widetilde{u}_{i}$. 

Now if we could show that the use of frozen dynamics has little effect to the cost, i.e., 
 \begin{equation}
L_{\tau,\epsilon}(t_{i},x(t_{i}),\widetilde{u}_{i},\omega) \sim \int_{t_{i}}^{t_{i+1}} L\left(s,x^{*}(s), \frac{x^{*}(s)}{\epsilon}, u^{*}(s)\right)ds, 
 \end{equation} 
then we obtain the following improvement of~\eqref{outline-fail}:
\begin{equation}  \label{heuristicapproximation}
 V_{\epsilon}(t,x,\omega)=
 \sum_{i=1}^N L_{\tau,\epsilon}(t_{i},x(t_{i}),\widetilde{u}_{i},\omega) +o(1).
\end{equation}
Thanks to the frozen dynamics, we can show that $L_{\tau,\epsilon}$ is a stationary subadditive process. 
This allows us to use the subadditive ergodic theorem to deduce the almost sure existence of the limit
\begin{equation} \label{convergenceofsubaddergotheorem}
\lim_{\epsilon \rightarrow 0} L_{\tau,\epsilon}(t_{i},x(t_{i}),\widetilde{u}_{i},\omega)=  \widetilde{L}_{\tau}(t_{i},x(t_{i}),\widetilde{u}),
\end{equation}
which is positively homogeneous in $\tau$. 
We can then define an \emph{effective Lagrangian} by 
\begin{equation}
 \widetilde{L}(t_{i},x(t_{i}),\widetilde{u}_{i}) =\frac{\widetilde{L}_{\tau}(t_{i},x(t_{i}),\widetilde{u}_{i})}{\tau}.
\end{equation}
If we know that $\widetilde{L}$ is jointly continuous, combined with the assumed ``regularity'' of $u^*$, we can proceed as
\begin{equation} \label{heurstichomogenization}
 V_{\epsilon} (t,x,\omega) 
\sim \sum_{i=1}^N (t_{i+1}-t_i) \widetilde{L}(t_i,x(t_i),u^*(t_i))
\sim \int_{t}^{T} \widetilde{L}(s,x(s),u^*(s))ds
\ge \widetilde{V}(t,x).
\end{equation}
The proof of the other direction is done by reserving the above procedure. \\

Let us comment on several key points in the above outline. First of all, the use of subadditive ergodic theorem is not new in this context. It is used in~\cite{souganidis1999stochastic,Rezakhanlou2000} and many later works also rely on it. So the additional difficulties in the generality that we present here is the regularity issues: the existence of ``regular'' approximate optimal control, the stability of $\widetilde{L}$ in modification (freezing) of the state dynamics, and the continuity of $\widetilde{L}$. 
For the first issue, we will simply show that there is a nearly step function control that approximates our value function. 
The second and third issues are more subtle and closely related. The difficulty is highlighted by the fact that we will want to prove that the effective Lagrangian $\widetilde{L}$ is continuous via proving a equicontinuity result on $\widetilde{L}_{\tau, \epsilon}$. As we have seen in~\eqref{outline-fail}, the cost functional 
\begin{equation}
\int L\left(s,x(s),\frac{x(s)}{\epsilon},u(s),\omega\right)ds
\end{equation}
is highly unstable in $x(\cdot)$. Thus in order to compare the two costs with different starting points, we cannot use a simple continuity argument, we have to construct controls for which one trajectory essentially traces out the other one exactly and hence acquires no error in the microscopic variable. 
Certainly, this type of argument requires some flexibility to the state dynamics. In addition to this, since we will assume the continuity of $L$ only locally uniform in the control variable, it is also important to introduce a type of \emph{effective compactness} of the control space which ensures that the control does not get too large. This requires a coercivity assumption that roughly states that, as the control variable gets large, the cost grows faster than the available speed. 

 \subsection{Assumptions and main results}
 
 We first recall the definition of stationarity and ergodicity. Let $ \{ \varphi_{x} \colon \Omega\to\Omega\}_{x\in \mathbb{R}^d}$ be a group of measure preserving transformations on the probability space $(\Omega, \mathcal{F}, P)$, that is  for any $x,y\in \mathbb{R}^d$, $\varphi_{x+y}=\varphi_{x}\circ\varphi_{y}$ and $P\circ\varphi_{x}^{-1}=P$.  The transformation group $\varphi $ is said to be ergodic if $P(A) \in \{ 0,1 \}$  for all $A \in \mathcal{F}$ such that $\varphi_{y}^{-1} A = A$ up to a null-set for all $y \in \mathbb{R}^{d}$. \\\\
 
   \begin{assumption} \label{stationaryergodicassumption} 
For every fixed $t_{0},x_{0},u$ the process $L(t_{0},x_{0},\cdot, u, \omega)$ is stationary ergodic  in $\Omega$. That is to say there is an ergodic transformation group $ \{ \varphi_{x} \colon \Omega\to\Omega\}_{x\in \mathbb{R}^d}$ such that, for every $x,y$ and $t_{0},x_{0},u$ fixed

 \begin{equation}
 L(t_{0},x_{0},x+y, u, \omega)=L(t_{0},x_{0},x, u,  \varphi_{y} \omega).
 \end{equation}
 \end{assumption}

For the remaining assumptions, we make the following definitions 
   \setlength{\jot}{18pt}
  \begin{align}
  U^{R}:&= \lbrace u \: : \: |u| \leq R \rbrace, \\
  \mathcal{U}^{R}& = \{ u \in \mathcal{U},  \: \: \: \|u\|_{\infty } \leq R \}.
  \end{align}
and use this to define the estimates on the cost function and state dynamics

  \begin{align}
  L^{*}(R)&: = \sup_{t,x,y, u \in U^{R}, \: \omega \in \Omega} |L \left( t,x,y, u,\omega \right) |,\\
  L_{\inf} (R)&: = \inf_{t,x,y, |u| \geq R, \:\omega \in \Omega } L \left( t,x,y, u,\omega \right),\\ 
  f^{*}(R)&: = \sup_{x, u \in U^{R} } |f(x,u)|.
  \end{align}

\begin{assumption} \label{finiteproblem}
For every $R>0$, the terms $L^{*}(R)$, $L_{\inf}(R)$ and $f^{*}(R)$ are finite.
\end{assumption}

 \begin{assumption}\label{Lip-f}
 $f$ is uniformly Lipschitz continuous on each $\mathbb{R}^{d} \times U^{R}$ with Lipschitz constant $\|f\|^{R}_{Lip}$. 
 \end{assumption}
 
  \begin{assumption}\label{Lip-L}
 
 For every $R>0$, there exists a constant $\|L\|_{Lip}^{R}$ and a continuous function $m_{L}^{R} : [0,\infty) \rightarrow [0,\infty)$ with $m_{L}^{R}(0)=0$ such that for every $x_{i}, y_{i} \in \mathbb{R}^{d}$ and $u_{i} \in U^{R}$, $t_{i} \in [0,T]$,  for $i \in \{1,2 \},$
 \begin{equation}
\big | L(t,x_{1},y_{1},u_{1},\omega) -  L(t,x_{2},y_{2},u_{2},\omega) \big |  \leq m_{L}^{R}(|x_{1}-x_{2}|+|y_{1}-y_{2}|) + \|L\|_{Lip}^{R} (|u_{1}-u_{2}|)
 \end{equation}
 
\noindent almost surely in $\Omega$. Furthermore we assume that there exists a $\|L\|_{Lip}$ such that
 \begin{equation}
|L \left( t_{1},x, y, u,\omega) - L(t_{2},x,y, u,\omega \right) | \leq \|L\|_{Lip} (|t_{1}-t_{2}|) 
 \end{equation}

\noindent for all $t,s \in [0,T] $ and $x,y,u \in \mathbb{R}^{d}$ a.s. in $\Omega$. 
  \end{assumption}
  
  \begin{assumption} \label{Terminalcostcont}
 $\psi$ is bounded and there exists a $m^{\psi} : [0,\infty) \rightarrow [0,\infty)$ with $m_{\psi}(0)=0$ such that
 
 \begin{equation}
\big | \psi(x) - \psi(y) \big | \leq m^{\psi}( | x-y|) 
 \end{equation}
 
  \end{assumption}
  
 \begin{assumption}  \label{zeroisinrange}
  For some $\widetilde{M} > 0$, for every $x \in \mathbb{R}^{d}$ there exists a  $u \in U^{\widetilde{M}}$ such that
\begin{equation}
f(x,u)=0. 
\end{equation}

\end{assumption}

 \begin{assumption} \label{Growthassumption}
 There exists an $L_{*}(u) \leq L_{\inf}( | u| )$  and $\lambda > 0$ such that
 \begin{equation} \label{part1GA}
 \frac{L_{*}(u )}{f^{*}(|u|+ \lambda)}=:\gamma(u) \rightarrow \infty \:\:\: \text{as} \:\: |u| \rightarrow \infty.
 \end{equation}

\noindent Furthermore there exists a function $\Theta : \mathbb{R}^{d}  \rightarrow \mathbb{R}^{d}$ such that, for every $x,u_{1},u_{2} \in \mathbb{R}^{d}$

\begin{equation} \label{part2GA}
L_{*}(u_{1})-L_{*}(u_{2}) \geq \Theta(u_{2}) \cdot \big [ f(x,u_{1}) - f(x,u_{2}) \big ] 
\end{equation}

 \end{assumption}

For $\eta >0$ we denote the open ball of radius $\eta$ around $x$ as $B_{\eta}(x)$.

\begin{assumption} \label{implicitfunctionassumption}

For any given $R > 0$ and every $ x_{0} \in \mathbb{R}^{d}$ and $u \in U^{R}$, there exists a $\eta(R) >0$ independent of $x_{0}$ and a Lipschitz continuous function

\begin{equation}
H:  B_{\eta(R)}(x_{0}) \times B_{\eta(R) } \big ( f(x_{0},u) \big ) : \rightarrow  B_{\eta(R) } (u)
\end{equation}

\noindent such that $H(x_{0},u)=u$ and for any $ (x,v) \in B_{\eta(R)}(x_{0}) \times B_{\eta(R) } \big ( f(x_{0},u) \big ) $ 

\begin{equation}
f( x, H(x,v)) = v.
\end{equation}
 This function $H$ depends on $x_{0}$ and $u$, which we omit in our notation. We assume this family of functions $H$ parametrised by elements of $ \mathbb{R}^{d} \times U^{R}$ is  Lipschitz equicontinuous, and denote this modulus of continunity by $\|H\|^{R}_{Lip}$.
\end{assumption}

\begin{assumption} \label{convexassumption}
For each $R>0$  and every $x \in \mathbb{R}^{d}$, $f(x,U^{R})$ is a convex set.
\end{assumption}

 Our main results concerning homogenization of control problems and  Hamilton Jacobi equations  are as follows.
 
\begin{theorem} \label{maintheorem1}

Under Assumptions 1-\ref{convexassumption} there exists an effective Lagrangian $\widetilde{L}$ such that
\begin{equation}
\lim_{\epsilon \rightarrow 0} V_{\epsilon}(t,x) = \widetilde{V} (t,x)
\end{equation}

\noindent uniformly on compact sets, where $\widetilde{V}$ is the value function corresponding to the cost functional 

 \begin{equation}
 \widetilde{J} (t,x,u)=\int_{t}^{T} \widetilde{L}  \left ( s,x(s), u(s) \right ) ds + \psi(x(T)) 
 \end{equation}
 
\noindent and state dynamics

 \begin{equation} 
\frac{dx(s)}{ds} =f(x(s),u(s)) .
\end{equation}

\end{theorem}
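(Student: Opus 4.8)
The plan is to prove the two one-sided bounds $\liminf_{\epsilon\to0}V_\epsilon(t,x,\omega)\ge\widetilde V(t,x)$ and $\limsup_{\epsilon\to0}V_\epsilon(t,x,\omega)\le\widetilde V(t,x)$ for a.e.\ $\omega$ and each fixed $(t,x)$, and then to upgrade pointwise a.s.\ convergence to locally uniform convergence. Three reductions come first. \emph{Effective compactness}: a truncation argument based on the coercivity \eqref{part1GA} of Assumption~\ref{Growthassumption} (together with \eqref{part2GA} and Assumption~\ref{zeroisinrange}) shows that near-optimal controls for $V_\epsilon$ may be taken in $\mathcal U^R$ for an $R$ depending only on the data, uniformly in $\epsilon$ and $\omega$, since a long excursion with $|u|$ large costs more per unit of displacement it produces than crawling slowly along essentially the same path; a density argument then lets us restrict further to step-function controls with finitely many values in $U^R$. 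Finally, under the \emph{frozen} dynamics the object $L_{\tau,\epsilon}$ of \eqref{Macrocostfunction} becomes, after the translation $\omega\mapsto\varphi_{x_0/\epsilon}\omega$ of Assumption~\ref{stationaryergodicassumption}, a stationary subadditive process; the subadditive ergodic theorem then gives, for each fixed $(t_0,x_0,\widetilde u,\tau)$ and a.e.\ $\omega$, the limit $L_{\tau,\epsilon}(t_0,x_0,\widetilde u,\omega)\to\tau\,\widetilde L(t_0,x_0,\widetilde u)$, which is deterministic and positively $1$-homogeneous in $\tau$, and Assumption~\ref{convexassumption} guarantees the $\widetilde u_i$ of \eqref{tildeu} exist.

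The technical core is a \emph{tracing lemma}. If $x(\cdot)$ is the true trajectory of a step control $u$ on an interval of length $\tau$ starting at $x(t_i)$, then for $\tau$ small the control $u'(s)=H\bigl(x(s),f(x(t_i),u(s))\bigr)$, with $H$ from Assumption~\ref{implicitfunctionassumption}, is admissible and has \emph{frozen} trajectory exactly $x(\cdot)$, while $\|u'-u\|_\infty\le\|H\|^R_{Lip}\|f\|^R_{Lip}f^*(R)\,\tau$. Thus the fast variable $x(\cdot)/\epsilon$ is reproduced identically — the $\epsilon^{-1}$ magnification responsible for \eqref{outline-fail} is sidestepped — and costs change only through the $m_L^R$-continuous macroscopic arguments of $L$ and, at rate $\|L\|^R_{Lip}$, the control variable. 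Running this in both directions shows that $L_{\tau,\epsilon}(t_i,x(t_i),\widetilde u_i,\omega)$ agrees up to $O(\tau^2)$ with the minimal cost of joining $x(t_i)$ to $x(t_{i+1})$ under the \emph{true} dynamics with $L$ frozen at $(t_i,x(t_i))$; the same device applied to a spatially translated path, with the resulting environment shift absorbed via Assumption~\ref{stationaryergodicassumption}, yields equicontinuity of $L_{\tau,\epsilon}$ in $(x_0,\widetilde u)$ uniform in $\epsilon$. By a standard argument combining this equicontinuity with the a.s.\ limit above and the equidistribution of the translates $x_0/\epsilon$, the convergence $L_{\tau,\epsilon}/\tau\to\widetilde L$ then holds a.s., uniformly over $(x_0,\widetilde u)$ in compacts; consequently $\widetilde L$ inherits a modulus of continuity and the bound $\widetilde L(t_0,x_0,\widetilde u)\ge L_*(\widetilde u)$ (from \eqref{part2GA} by a Jensen-type estimate), so the homogenized problem and its value $\widetilde V$ are well posed and $\widetilde V$ is continuous.

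Both bounds now follow from the discretization \eqref{heuristicapproximation}. For $\liminf\ge\widetilde V$: with a near-optimal step control $u^*$ for $V_\epsilon$ and trajectory $x^*$, the cost on $[t_i,t_{i+1})$ dominates, up to $\tau\,m_L^R((1+f^*(R))\tau)$, the minimal frozen-Lagrangian cost of joining $x^*(t_i)$ to $x^*(t_{i+1})$ under the true dynamics, hence, by the tracing lemma, the quantity $L_{\tau,\epsilon}(t_i,x^*(t_i),\widetilde u_i,\omega)$ up to $O(\tau^2)$; summing over $i$ and letting $\epsilon\to0$, using uniform convergence on the compact swept by $(x^*(t_i),\widetilde u_i)$, gives $V_\epsilon\ge\sum_i\tau\widetilde L(t_i,x^*(t_i),\widetilde u_i)+\psi(x^*(T))-o_\tau(1)-o_\epsilon(1)$, and the sum is $\ge\widetilde V(t,x)-o_\tau(1)$ because the homogenized control $v(s)=H(y(s),f(x^*(t_i),\widetilde u_i))$ steers the homogenized trajectory $y$ exactly through the points $x^*(t_i)$ at cost $\sum_i\tau\widetilde L(t_i,x^*(t_i),\widetilde u_i)+\psi(x^*(T))+o_\tau(1)$. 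Letting $\epsilon\to0$ then $\tau\to0$ concludes this side. For $\limsup\le\widetilde V$, reverse the procedure: take a near-optimal step control $\widetilde v$ for $\widetilde V$ with trajectory $\widetilde y$, form the $\widetilde u_i$ via \eqref{tildeu}, on each step pick a near-minimizer of the frozen-Lagrangian true-dynamics cost from $\widetilde y(t_i)$ to $\widetilde y(t_{i+1})$, and concatenate to obtain an admissible control for $V_\epsilon$ whose trajectory passes through the $\widetilde y(t_i)$; its cost is $\le\sum_i L_{\tau,\epsilon}(t_i,\widetilde y(t_i),\widetilde u_i,\omega)+o_\tau(1)$, and here $\epsilon\to0$ acts at the \emph{fixed} points $(\widetilde y(t_i),\widetilde u_i)$, producing the Riemann sum $\sum_i\tau\widetilde L(t_i,\widetilde y(t_i),\widetilde u_i)\to\int_t^T\widetilde L(s,\widetilde y(s),\widetilde v(s))\,ds\le\widetilde V(t,x)+o(1)$, using $\widetilde u_i\to\widetilde v(t_i)$ by continuity of $H$.

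For the locally uniform statement, one shows $\{V_\epsilon\}$ has a modulus of continuity in $(t,x)$ uniform in $\epsilon$ and $\omega$: from a displaced start $x'$, drive straight to $x^*(t+\delta)$ over a time $\delta\asymp|x-x'|^{1/2}$ with bounded controls (Assumptions~\ref{zeroisinrange} and \ref{implicitfunctionassumption}), then follow the near-optimal $u^*$, so that the trajectories — hence the fast variables — coincide exactly on $[t+\delta,T]$; this costs $o(|x-x'|)$, giving equicontinuity in $x$, and similarly in $t$. Being equibounded and equicontinuous, $V_\epsilon$ converges a.s.\ uniformly on compacts once it converges a.s.\ on a countable dense set, and $\widetilde V$ is deterministic by the $0$--$1$ law; the homogenization of the Hamilton--Jacobi equations then follows as in \eqref{homogenHJB}. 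The main obstacle throughout is the tracing lemma and the uniform-in-$\epsilon$ equicontinuity of $L_{\tau,\epsilon}$: the naive comparison \eqref{outline-fail} fails precisely because spatial errors are amplified by $\epsilon^{-1}$, so costs of nearby trajectories cannot be compared by continuity of $L$ alone and one must instead use the flexibility of the state dynamics (Assumption~\ref{implicitfunctionassumption}), kept usable by the coercive effective compactness, to make one trajectory reproduce another exactly in the fast variable; arranging all the error terms to vanish under the iterated limit $\epsilon\to0$ then $\tau\to0$, uniformly in $\omega$ and over the relevant compacts, is the delicate bookkeeping this demands.
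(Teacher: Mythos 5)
Your proposal is correct and follows essentially the same route as the paper: effective compactness via the coercivity assumption, reduction to step controls, the frozen-dynamics subadditive ergodic limit, the tracing construction via $H$ to reproduce the fast variable exactly, equicontinuity of $L_{\tau,\epsilon}$ and of $V_\epsilon$, and the two-sided discretization argument. The paper organizes the final step as a triangle inequality through the intermediate objects $V_{\tau,\epsilon}$ and $\widetilde{V}_\tau$ rather than as separate $\liminf$/$\limsup$ bounds, but the content is the same.
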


\begin{theorem} \label{maintheorem2}

Under Assumptions 1-\ref{convexassumption} there exists an effective  Hamiltonian $\widetilde{H}$ such that
\begin{equation}
\lim_{\epsilon \rightarrow 0} V_{\epsilon}(t,x) = \widetilde{V} (t,x)
\end{equation}

\noindent uniformly on compact subsets, where $\widetilde{V} (t,x)$ is the unique viscosity solution to the Hamilton Jacobi equation

\begin{equation}
- \frac{\partial \widetilde{V}(t,x)}{\partial t} + \widetilde{H}(t,x,D_{x} \widetilde{V} )=0, \:\:\:\: V(T,x)=\psi(x).
\end{equation}

\end{theorem}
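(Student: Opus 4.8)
The plan is to deduce Theorem~\ref{maintheorem2} from Theorem~\ref{maintheorem1} together with the classical identification of the value function of a deterministic optimal control problem as the unique viscosity solution of its Hamilton--Jacobi--Bellman equation. Theorem~\ref{maintheorem1} already gives $V_{\epsilon}(t,x)\to\widetilde V(t,x)$ locally uniformly, where $\widetilde V$ is the value function associated with the cost $\widetilde J$, the state dynamics $f$ and the terminal cost $\psi$. So all that remains is to show that this same $\widetilde V$ is the unique viscosity solution of
\[
-\partial_t \widetilde V + \widetilde{\mathcal H}(t,x,D_x\widetilde V)=0,\qquad \widetilde V(T,\cdot)=\psi,
\]
where $\widetilde{\mathcal H}(t,x,p):=\sup_{v}\{-f(x,v)\cdot p-\widetilde L(t,x,v)\}$, and then to set $\widetilde H:=\widetilde{\mathcal H}$.

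First I would collect the structural properties of $\widetilde L$ produced along the way in the proof of Theorem~\ref{maintheorem1}: joint continuity of $(t,x,v)\mapsto\widetilde L(t,x,v)$, with a modulus inherited from $m_L^R$, $\|L\|_{Lip}^R$, $\|L\|_{Lip}$ and the equicontinuity of the family $H$ in Assumption~\ref{implicitfunctionassumption}; the upper bound $|\widetilde L(t,x,v)|\le L^{*}(|v|)$; and the coercivity coming from Assumption~\ref{Growthassumption}, namely $\widetilde L(t,x,v)\ge L_{*}(v)$ with $L_{*}(v)/f^{*}(|v|+\lambda)\to\infty$. Combined with Assumptions~\ref{Lip-f}, \ref{Terminalcostcont} and \ref{zeroisinrange} on $f$ and $\psi$, these are exactly the ingredients that make the standard theory applicable (see \cite{fleming2006controlled}): $\widetilde V$ is bounded and uniformly continuous on $[0,T]\times\mathbb R^d$, it satisfies the dynamic programming principle, and hence it is a viscosity solution of the terminal value problem above. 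The coercivity of $\widetilde L$ in $v$ relative to the available speed $f^{*}$ is what lets one restrict the control problem to some $\mathcal U^{R}$ (the ``effective compactness'' alluded to in the outline), so that $\widetilde{\mathcal H}$ is finite and continuous, the supremum defining it is attained on a compact set depending only on $(t,x,p)$ on bounded sets, and $\widetilde{\mathcal H}(t,x,p)$ is coercive in $p$.

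For uniqueness I would invoke the comparison principle for $-\partial_t V+\widetilde{\mathcal H}(t,x,D_xV)=0$ in the class of bounded uniformly continuous functions with the prescribed terminal data. This rests on the regularity of $\widetilde{\mathcal H}$: the Lipschitz continuity of $f$ in $x$ (Assumption~\ref{Lip-f}) together with the local continuity and boundedness of $\widetilde L$ yields a structure condition of the form $|\widetilde{\mathcal H}(t,x,p)-\widetilde{\mathcal H}(t,y,p)|\le \omega\big(|x-y|(1+|p|)\big)$ on bounded sets, plus continuity in $t$; these are precisely the hypotheses under which the comparison principle holds. Chaining the three pieces --- convergence from Theorem~\ref{maintheorem1}, the identification of $\widetilde V$ as a viscosity solution, and uniqueness via comparison --- gives Theorem~\ref{maintheorem2} with $\widetilde H=\widetilde{\mathcal H}$.

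The main obstacle is not the viscosity-solution machinery, which is classical once the Hamiltonian is well behaved, but verifying that $\widetilde L$ genuinely inherits continuity, the growth lower bound, and the convexity-of-reachable-set structure \emph{uniformly} enough that $\widetilde{\mathcal H}$ meets the comparison-principle hypotheses: one must ensure the ``$\sup_v$'' in $\widetilde{\mathcal H}$ is effectively over a compact set depending only on $p$ (so $\widetilde{\mathcal H}$ is finite with the right modulus in $x$ and $t$) and is coercive in $p$. This is exactly where Assumption~\ref{Growthassumption} --- both the coercivity $\gamma(u)\to\infty$ in \eqref{part1GA} and the supporting-hyperplane inequality \eqref{part2GA} --- and the convexity Assumption~\ref{convexassumption} enter, and one has to check that $\widetilde L$ carries these over from $L$, which should already be available from the construction of $\widetilde L$ in the proof of Theorem~\ref{maintheorem1}.
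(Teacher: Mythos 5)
Your proposal is correct and follows essentially the same route as the paper: Section 2.7 deduces Theorem \ref{maintheorem2} directly from Theorem \ref{maintheorem1} by setting $\widetilde{H}=\widetilde{\mathcal{H}}$ and invoking the classical identification of $\widetilde{V}$ as the unique viscosity solution of its Hamilton--Jacobi--Bellman equation (citing \cite{fleming2006controlled}), noting in a footnote that the continuity of the Lagrangian, the state dynamics, and the effective compactness of the control space are the essential ingredients. Your write-up merely spells out in more detail the verification of those ingredients (continuity and coercivity of $\widetilde{L}$, the comparison principle), which the paper leaves to the cited reference.
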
 

Assumption $1$ is a standard stationary ergodic assumption. Assumption $2$ is a similarly standard assumption to ensure the solution to the integral equation

\begin{equation}
 x(t)-x(t_{0})=\int_{t_{0}}^{t} f(x(s),u(s)) ds
 \end{equation}
 
 \noindent is well defined for each $u \in \mathcal{U}$.\\
 
The continuity conditions in  Assumption $3$ are  used to prove effective boundedness as per Lemmas $2.2$ and $2.3$ below. In fact this is the only place that Lipschitz continunity is needed and the proof of Lemmas \ref{effectiveboundednessofvaluelemma} and \ref{Boundedness lemma2} extensively rely on it.  \\

The combination of Assumption \ref{zeroisinrange} and \ref{implicitfunctionassumption} imply that there exists a $\delta >0$ such that at each $x$, for every $v \in \mathbb{S}^{d}$ (the d-dimensional sphere) there is a $u \in U^{M}$ such that

\begin{equation} \label{hereisdelta}
f(x,u)=\delta v
\end{equation}

\noindent for $M:= \widetilde{M} + \eta(\widetilde{M}).$ This is one form of the `flexibility' mentioned in section 1.2  which is important to stipulate. The terms  $M$ and $\delta$ will be referred to throughout. \\

 Assumption \ref{Growthassumption} is analogous to various coercive assumptions in the Calculus of variations setting. The presence of the $\lambda > 0$ makes this a mildly stronger assumption, however is satisfied in ``many cases''.  For example, in the calculus of variations with $L_{*}(u) \geq c|u|^{\beta} -C$   for $\beta>1$ and appropiate constants $c$ and $C$.  The role of the second statement in this assumption is to prove that the homogenized Lagrangian satisfies the same coercive assumption as the original Lagrangian. It is essentially a generalization of convexity. In fact in the calculus of variations if we assume that $L_{*}(u)$ is a differentiable convex function, then the second statement of Assumption \ref{Growthassumption} is satisfied by setting $\Theta ( u): = \nabla L_{*} (u) $. This is discussed further in section 2.5. \\
 
%
 The implication of Assumption $5$ is that we can essentially compactify the control set. This is a key point in the analysis. Assumption $6$ gives us flexibility as in \eqref{hereisdelta} but it in fact implies more.
 For example, given any control, we can always find another control which follows the same trajectory but at a higher speed. Its formulation is motivated by an implicit function theorems. A natural setting where Assumption \ref{implicitfunctionassumption} holds is when $f$ is continuously differentiable in the control variable with an Jacobian whose inverse is uniformly bounded on  $\mathbb{R}^{d} \times U^{R}$ for each $R>0$. The function $H$ appearing in Assumption \ref{implicitfunctionassumption} will also be referenced throughout, with the dependence on $x_{0},u$ omitted - they will always be clear from the context. Similarly to Assumption $3$, the Lipschitz continuity of $H$ is necessary for the proof of Lemmas \ref{effectiveboundednessofvaluelemma} and \ref{Boundedness lemma2}. \\
 
 Assumption \ref{implicitfunctionassumption} and \ref{convexassumption} will imply the existence of the $\widetilde{u}$ as in equation \eqref{tildeu}, which is essential to preserve the state dynamics. Although restrictive, Assumption \ref{convexassumption}, is a common assumption in the theory of optimal control. A typical result is that under Assumption \ref{convexassumption}, suitable regularity conditions, and compactness (or effective compactness as per Lemma \ref{effectiveboundednessofvaluelemma}) of the control set, an optimal control exists.   We refer the reader to \cite{a.w.j.stoddart1967}, \cite{a.f.filippov1962} and \cite{roxin1962} for more details, though we emphasise that the existence of an optimal control does not play a role in the homogenization process. \\

\begin{remark}
Let us compare our assumptions to those in~\cite{souganidis1999stochastic,Rezakhanlou2000} in their setting. Thus we focus on the case of calculus of variations, which makes Assumptions~\ref{Lip-f},~\ref{zeroisinrange} and~\ref{implicitfunctionassumption} trivial. Assume that $L$ is independent of the first and second arguments. 
Then Assumption~\ref{Growthassumption} is equivalent to a mean coercivity condition which states that $L_{*}(u)$ can be taken as a convex function satisfying $\lim_{|u| \to \infty} L_*(u)/|u|=\infty$, which both~\cite{souganidis1999stochastic} and~\cite{Rezakhanlou2000} assume. But in fact additional polynomial growth conditions are assumed in~\cite{souganidis1999stochastic} (upper and lower bounds) and~\cite{Rezakhanlou2000} (upper bound), which we do not require. 
Concerning Assumptions~\ref{Lip-L}, note first that in this case, we may assume that $L$ is convex in $u$ since its convex biconjugate leads to the same Hamiltonian and hence same value function. Combined with the above polynomial upper bounds, the local Lipschitz continuity in $u$ follows. Hence it is also implicitly assumed in~\cite{souganidis1999stochastic} and~\cite{Rezakhanlou2000}. Therefore our regularity assumption is no stronger than theirs, and~\cite{Rezakhanlou2000} moreover assumes continuous differentiability of $L$ in $u$. 
\end{remark}

The assumptions we impose on the state dynamics are strong however is more general then the calculus of variations. For example,  we may wish to consider when the maximum speed is bounded by a constant $C$. This is fundamentally different to the calculus of variations which places no such restrictions. To this end consider the following state function

\begin{equation}
f(x,u) = \frac{ C u} {\sqrt{|u|^{2}+1} }
\end{equation}

Observe that

\begin{equation}
|f(x,u)|    \leq C.
\end{equation}

For $v=f(x,u)$ the inverse of this is

\begin{equation} \label{inverse}  
u =  \frac{ v} { \sqrt{ 1 - \frac{ |v|^{2}}{C^{2}}} } 
\end{equation}

 The state dynamics satisfy Assumptions $3,6,8$ and thus may be considered to fall under our framework. Suppose that $L_{*}(u) = |u|^{\beta} -C$ for some $C > 0 $. Then the first statement in Assumption \ref{Growthassumption}, as given in equation \eqref{part1GA}, is true whenever $\beta >0$.  However we shall find in section $2.5$ that the  second statement is satisfied only if $\beta \geq 1$. \\

 \section{Proof of Main results}
 
 \subsection{Approximation scheme}
 In this subsection we define the approximation scheme which is the key construction in the proof of main result. The discretization is based on defining a discrete cost function by locally optimizing over a given terminal condition.  For $\tau>0$ let $t_{0}=t$ and $t_{i}=t_{i-1}+\tau \wedge T$ until the first $N$ such that $t_{N}=T$. This sequence $\{ t_{i} \}_{i=1}^{N}$ defines a partition of $[t,T]$. We recall the definition
 
  \begin{equation} 
L_{\tau, \epsilon}(t_{0},x_{0},\widetilde{u},\omega): = \inf_{u(s) \in \Pi_{t_{0},x_{0},\tau,\widetilde{u}}}  \int_{t_{0}}^{t_{0}+\tau} L\left(t_{0},x_{0},\frac{x(s)}{\epsilon}, u(s),\omega \right) ds
\end{equation}

\noindent where

\begin{equation}
\Pi_{t_{0},x_{0},\tau,\widetilde{u}}=\left\{ u \in \mathcal{U} \: : \:\int_{t_{0}}^{t_{0}+\tau} f(x_{0},u(s))ds= \tau f(x_{0},\widetilde{u}) \right\}. 
\end{equation} 

 Note that in these objects, all the macroscopic variables are frozen. This is to ensure stationarity  in preparation  of utilizing the sub-additive ergodic theorem.\\ 
 
  Still keeping $t_{0},x_{0}$ fixed, consider any sequence finite sequence control vectors $\lbrace \widetilde{u}_{i}  \rbrace_{i=1}^{N} \in \mathbb{R}^{d}$ and the uniquely defined $x_{i}$.

\begin{equation}
x_{i+1}=x_{i} + \tau f(x,\widetilde{u}_{i}) 
 \end{equation}

   Let $\mathcal{U}_{\tau}$ denote the set of all such sequences $\lbrace \widetilde{u}_{i}  \rbrace_{i=1}^{N} \in \mathbb{R}^{d}$.  We may now use $\mathcal{U}_{\tau}$ as an approximation to our control problem. 
For $\widetilde{u}  \in \mathcal{U}_{\tau}$ define an  approximate effective cost functional

 \begin{equation} \label{Approximatecostfunction}
 J_{\tau,\epsilon}(t,x,\widetilde{u}, \omega) = \sum_{i=0}^{N-1} L_{\tau, \epsilon}(t_{i},x_{i}, \widetilde{u}_{i}, \omega) + \psi(x_{N}) 
 \end{equation}

\noindent and approximate effective value function

  \begin{equation} \label{Approximatevaluefunctionsum}
 V_{\tau, \epsilon}(t,x, \omega) = \inf_{\widetilde{u} \in \mathcal{U}_{\tau}}  J_{\tau, \epsilon}(t,x,\widetilde{u}, \omega).
 \end{equation}
 
We also define the non-stationary version of $L_{\tau,\epsilon}$ as
 
 \begin{equation} \label{nonstationaryone}
\widehat{L}_{\tau, \epsilon}(t_{0},x_{0},\tau,\widetilde{u},\omega) := \inf_{u(s) \in \widehat{\Pi}_{t_{0},x_{0},\tau,\widetilde{u}}}  \int_{t_{0}}^{t_{0}+\tau} L \left( s,x(s),\frac{x(s)}{\epsilon}, u(s),\omega \right) ds,
\end{equation}

\noindent where

 \begin{equation}
\widehat{\Pi}_{t_{0},x_{0},\tau,\widetilde{u}} = \left\{ u \in \mathcal{U} \: : \:\int_{t_{0}}^{t_{0}+\tau} f(x(s),u(s))ds= \tau f(x_{0},\widetilde{u}) \right\}. 
\end{equation} 
A crucial point is that if $u \in \Pi_{t_{0},x_{0},\tau,\widetilde{u}}$, then, provided that $\tau$ and $f(x_{0},\widetilde{u})$  are sufficiently small, namely $|\tau f(x_{0},\widetilde{u})| \leq \eta(|\widetilde{u}|)$, then by Assumption \ref{implicitfunctionassumption},  there will exist a $\widehat{u} \in  \widehat{\Pi}_{t_{0},x_{0},\tau,\widetilde{u}}$ which traces out the same trajectory as u . Hence if we are in this regime, $\widehat{L}_{\epsilon}$ is always well defined.

 \subsection{Technical Lemmas}

	Throughout the proof of the main theorem, we will require several technical lemmas. The proof of these is relegated to Section~\ref{Tech}. The first of these is regarding the use of approximating controls with piecewise constant dynamics.\\
 
  We consider a general control problem $\bar{f}(x,u)$ and Lagrangian $\bar{L}(t,x,u)$. For this control problem we define $\mathcal{S}$ be the set of controls such that $\bar{f}(x(s),u(s))$ is a step function. That is, $\mathcal{S}$ is the set of controls such that for some intervals $\{ [t_{i},t_{i+1} ) \}_{i=1}^{N} $ partitioning $[t_{0},\bar{T}]$ and values $\{ v_{i} \}_{i=1}^{N} $,

\begin{equation}
 \bar{f}(x(r),u(r))=\sum_{i=0}^N v_{i} 1_{[t_{i}, t_{i+1})}(r).
\label{step-f}
\end{equation}
 
  However, we always assume that $\bar{f}$ satisfies Assumption \ref{implicitfunctionassumption} and the bounds $f^{*}$, $L^{*}$ as given in the assumption. The following lemma will be applied to multiple control problems.
 
 \begin{lemma} \label{StepfunctionLemma}
Let $\bar{J}$ and $\bar{V}$ be the cost and value function of a control problem on time $[0,\bar{T}]$ with Lagrangian $\bar{L}$ and state dynamics $\bar{f}$. Assume that $\bar{L}$ and $\bar{f}$ satisfy Assumptions $2,3$ and $6$.  
 Then for every $t \in [0,\bar{T}]$ and $x \in \mathbb{R}^{d}$ 
\begin{equation}
\bar{V}(t,x):=\inf_{u \in \mathcal{U}} \bar{J}(t,x,u) = \inf_{u \in \mathcal{S}} \bar{J}(t,x,u).
\end{equation}

\noindent The same is true for the control problem with terminal constraint $x(\bar{T})=x$.
\end{lemma}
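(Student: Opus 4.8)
The plan is to show the two value functions are equal by a two-sided inequality; the inequality $\inf_{u\in\mathcal S}\bar J \ge \inf_{u\in\mathcal U}\bar J$ is trivial since $\mathcal S\subseteq\mathcal U$, so the content is to approximate an arbitrary control $u\in\mathcal U$ by one whose associated velocity $\bar f(x(\cdot),u(\cdot))$ is a step function, without increasing the cost by more than a prescribed $\delta>0$. First I would fix $u\in\mathcal U$, reduce to a bounded control by noting that we may take $\|u\|_\infty\le R$ for some $R$ after paying an arbitrarily small amount in cost (this uses that $\mathcal U^R$ controls are cost-nearly-optimal; if one does not want to invoke effective compactness here one can simply work with an a priori fixed $u$ and hence a fixed $R=\|u\|_\infty$, since the statement quantifies over all $u\in\mathcal U$). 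Let $x(\cdot)$ be the trajectory driven by $u$, set $v(s):=\bar f(x(s),u(s))$, which is bounded by $f^*(R)$ and, because $x(\cdot)$ is Lipschitz and $\bar f$ is uniformly Lipschitz on $\mathbb R^d\times U^R$ (Assumption 3), is itself Lipschitz in $s$; in particular $v$ is uniformly continuous on $[0,\bar T]$.

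Next I would discretize: partition $[0,\bar T]$ into intervals $[t_i,t_{i+1})$ of mesh $h$ and set $v_i:=\frac{1}{t_{i+1}-t_i}\int_{t_i}^{t_{i+1}}v(s)\,ds$, the average velocity over the block, so that the step-function velocity field $\tilde v(s)=\sum_i v_i\mathbf 1_{[t_i,t_{i+1})}(s)$ produces a new trajectory $\tilde x(\cdot)$ with $\tilde x(t_i)=x(t_i)$ for every $i$ (the averaging is exactly what preserves the endpoints of each block, and hence the global terminal point, which is what we need for the terminal-constraint version). Uniform continuity of $v$ gives $\|v-\tilde v\|_\infty\le\omega_v(h)\to0$, hence $\sup_s|x(s)-\tilde x(s)|\le \bar T\,\omega_v(h)$, which is small. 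Now I must realize $\tilde v$ as $\bar f(\tilde x(s),\tilde u(s))$ for an admissible control $\tilde u$: on each block, $\tilde v(s)\equiv v_i$ and $\tilde x(s)$ stays within $\bar T\,\omega_v(h)$ of $x(t_i)$, while $v_i$ is within $\omega_v(h)$ of $v(t_i)=\bar f(x(t_i),u(t_i))$; so for $h$ small enough that both these are below $\eta(R)$, Assumption 6 (the implicit-function assumption) furnishes $\tilde u(s):=H(\tilde x(s),v_i)$ with $\bar f(\tilde x(s),\tilde u(s))=v_i$, and $\tilde u$ is bounded (values in $B_{\eta(R)}(u(t_i))$, so $\|\tilde u\|_\infty\le R+\eta(R)=:R'$) and measurable. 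Finally, estimate the cost difference $\bar J(t,x,\tilde u)-\bar J(t,x,u)=\int_0^{\bar T}\big[\bar L(s,\tilde x(s),\tilde u(s))-\bar L(s,x(s),u(s))\big]ds$ using the modulus-of-continuity bound of Assumption 3 on $U^{R'}$: the $x$-argument differs by $O(\omega_v(h))$, and the $u$-argument by $|H(\tilde x(s),v_i)-u(t_i)|+|u(t_i)-u(s)|$ — here I would control the first term by Lipschitz equicontinuity of $H$ (modulus $\|H\|^{R'}_{Lip}$) applied to $|\tilde x(s)-x(t_i)|+|v_i-\bar f(x(t_i),u(t_i))|$, both $O(\omega_v(h))$. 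The term $|u(t_i)-u(s)|$ is the one genuinely dangerous spot, since $u$ need not be continuous; this is dealt with by Lusin's theorem / density of continuous functions in $L^1$: replace $u$ at the outset by a uniformly continuous control agreeing with it off a set of measure $<\delta$ (its cost changes by at most $2L^*(R)\delta$), and carry out the whole argument for that regularized control. Letting $h\to0$ and then $\delta\to0$ yields $\inf_{u\in\mathcal S}\bar J(t,x,u)\le \bar V(t,x)$, completing the proof; the terminal-constraint case is identical since $\tilde x(\bar T)=x(\bar T)$ by construction.

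The main obstacle is precisely the discontinuity of a generic $u\in\mathcal U$: the cost functional is only continuous in the control through the $\|L\|^{R}_{Lip}|u_1-u_2|$ term with no smallness available pointwise, so one cannot naively freeze $u$ on blocks — one must regularize $u$ in $L^1$ first (paying $O(\delta)$ in cost via the uniform bound $L^*(R)$) and only then discretize its induced velocity. A secondary technical point requiring care is that the block-averaging must be performed on the velocity $v(s)=\bar f(x(s),u(s))$, not on $u$ itself, because it is the velocity that must be matched to preserve trajectory endpoints and to stay in the range where Assumption 6 applies; and one must verify that the resulting step-velocity is small enough (mesh $h$ small) that $H$ is defined along the entire perturbed trajectory, which is where the $x_0$-independence of $\eta(R)$ in Assumption 6 is essential.
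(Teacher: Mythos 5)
Your proposal is correct and follows essentially the same two-stage route as the paper: first approximate the measurable control by a regular one (the paper uses the Royden step-function approximation where you use Lusin, with the same Gronwall stability estimate and the same small cost penalty), then block-average the velocity $v(s)=\bar f(x(s),u(s))$ and invoke Assumption 6 to realize the resulting step velocity as an admissible bounded control along the piecewise-linear trajectory, preserving the block endpoints. The only slip is your opening claim that $v$ is Lipschitz for a general $u\in\mathcal U$ --- it is not, since $u$ is merely measurable --- but you correctly identify and repair this by performing the regularization before discretizing, so the argument stands.
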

 
 The second lemma argues that we can, without loss of generality, restrict the control space to a compact set. To formalize this we consider the cost functions obtained by restricting the admissible controls to $\mathcal{U}^{R}$ for a fixed $R>0$ and hence define
  
   \begin{equation}
    V^{R}_{ \epsilon}(t,x,\omega):= \inf_{u \in \mathcal{U^{R}}} J_{\epsilon} (t,x,u, \omega)
   \end{equation}
   
\noindent and the objects

\begin{align} 
L^{R}_{\tau, \epsilon} (t_{0},x_{0},\widetilde{u},\omega): &= \inf_{u(s) \in \Pi_{t_{0},x_{0},\tau,\widetilde{u}} \cap \mathcal{U}^{R}}  \int_{t_{0}}^{t_{0}+\tau} L \left( t_{0},x_{0},\frac{x(s)}{\epsilon}, u(s),\omega\right) ds
\end{align}
and
\begin{align} 
\widehat{L}^{R}_{\tau, \epsilon} (t_{0},x_{0},\widetilde{u},\omega): &= \inf_{u(s) \in \widehat{\Pi}_{t_{0},x_{0},\tau,\widetilde{u}} \cap \mathcal{U}^{R}}  \int_{t_{0}}^{t_{0}+\tau} L \left( t_{0},x_{0},\frac{x(s)}{\epsilon}, u(s),\omega\right) ds
\end{align}

Similarly, for the discrete control problem we define 

\begin{equation}
\mathcal{U}^{R}_{\tau} = \{ \widetilde{u} \in \mathcal{U}_{\tau} \: : \:  \forall i \leq N, \widetilde{u}_{i}  \leq R\}
\end{equation}

\noindent and the value function 

\begin{equation}
V^{R}_{\tau}(t,x,\omega) = \inf_{\widetilde{u} \in \mathcal{U}^{R}_{\tau}} \widetilde{J}_{\tau, \epsilon}(t,x,u,\omega)
\end{equation}
 
\begin{lemma} \label{effectiveboundednessofvaluelemma}
There exists an $K >0$ such that for all $t \in [0,T]$ and $x \in \mathbb{R}^{d}$, $\epsilon>0$ and almost every $\omega$,

\begin{equation} \label{effectiveboundednessofvalue}
V_{\epsilon}^{K}(t,x,\omega)=V_{\epsilon}(t,x,\omega)
\end{equation}

and 

\begin{equation} \label{effectiveboundednessofvaluediscrete}
V_{\tau, \epsilon}^{K}(t,x,\omega)=V_{\tau,\epsilon}(t,x,\omega).
\end{equation} 

\end{lemma}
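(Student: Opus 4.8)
The plan is to exploit Assumption \ref{Growthassumption} (part \eqref{part1GA}): large controls produce cost that grows faster than the available speed, so a trajectory using a large control can always be ``slowed down and redirected'' more cheaply. First I would fix a competitor control by the following device. Starting from any admissible $u$, at each time $s$ where $|u(s)|$ exceeds a threshold $R_0$ to be chosen, I want to replace $u(s)$ by a control of bounded norm that produces the \emph{same velocity direction but smaller (or comparable) displacement}, while parametrising time so that the total trajectory is still traversed. Concretely, using Assumption \ref{implicitfunctionassumption} together with Assumption \ref{zeroisinrange} — which by the remark after Theorem \ref{maintheorem2} gives, for every unit vector $v$, a control $u \in U^{M}$ with $f(x,u) = \delta v$ — one can realise any velocity direction with a control of norm at most $M$ at speed exactly $\delta$. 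So the strategy is: wherever $u$ is large, excise that portion of the trajectory and re-traverse the same arc using the $U^M$-controls at speed $\delta$; this takes more time, but since $L^*(M)$ is finite (Assumption \ref{finiteproblem}) and during the excised times the original cost was at least $L_{\inf}(|u(s)|) \ge L_*(u(s))$, the coercivity ratio $\gamma(u) \to \infty$ guarantees that for $R_0$ large enough the replacement is not more expensive — the extra running time $\times\, L^*(M)$ is dominated by the cost saved, because the original control covered displacement at rate $\le f^*(|u(s)|)$ while paying $\ge L_*(u(s)) = \gamma(u(s)) f^*(|u(s)|+\lambda) \ge \gamma(u(s)) f^*(|u(s)|)$ per unit time.

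The technical heart is to make the time-reparametrisation rigorous: if I slow the trajectory down I push the terminal time past $T$, which is not allowed. The clean fix is to run the argument in reverse — compare $V_\epsilon$ with $V_\epsilon^K$ by showing that \emph{any} control can be approximated, at no greater cost, by one in $\mathcal{U}^K$ possibly after first invoking Lemma \ref{StepfunctionLemma} to reduce to step controls. For a step control with values $v_i$ on $[t_i,t_{i+1})$: on any subinterval where the corresponding $|u_i|$ exceeds $R_0$, note the displacement over that subinterval is $(t_{i+1}-t_i) f(x(t_i),u_i)$, of length at most $(t_{i+1}-t_i) f^*(|u_i|)$. Keeping the \emph{same} subinterval length but replacing $u_i$ by the bounded control realising the same displacement direction requires that direction to be achievable within the subinterval at speed $\le \delta^{-1}|f(x(t_i),u_i)| \cdot (t_{i+1}-t_i)^{-1}$... which again may be too fast. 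So one instead subdivides: the key observation is that it is only the \emph{length} of the displacement, not the shape, that forces speed, and one can absorb the time cost by chaining with the zero-control ($f=0$, Assumption \ref{zeroisinrange}) — but that still changes the trajectory. I expect the actual argument to instead fix $K$ a priori from $\gamma$ via an inequality of the form: choose $K$ so that $L_*(u) \ge L^*(M) \cdot \frac{f^*(|u|)}{\delta} + C_\psi/T$ for all $|u| > K$, where $C_\psi = 2\|\psi\|_\infty$; then show by a direct exchange argument on each piece that using $|u_i| > K$ is strictly suboptimal, because the same net motion can be accomplished by a $U^M$-control in the \emph{same or less} time (since $f^*(|u_i|) \ge \delta$ once $|u_i|$ is large, the bounded control at speed $\delta$ is indeed not slower in covering a displacement of length $(t_{i+1}-t_i)|f(x(t_i),u_i)| \ge (t_{i+1}-t_i)\delta$... wait, that's backwards) — the correct sign comes from comparing displacement \emph{magnitudes}: a large control moves \emph{far}, so re-covering that far arc at the fixed speed $\delta$ takes \emph{longer}, and the surplus time is paid for out of the cost gap $L_*(u_i) - L^*(M)$, which by \eqref{part1GA} is eventually larger than $L^*(M)$ times the time-dilation factor $f^*(|u_i|)/\delta$.

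Hence the ordering of steps I would follow is: (i) invoke Lemma \ref{StepfunctionLemma} to reduce to step controls both for $V_\epsilon$ and for the discrete problem $V_{\tau,\epsilon}$; (ii) define the threshold $K$ explicitly in terms of $\gamma$, $L^*(M)$, $\delta$, $\|\psi\|_\infty$ and $T$, using that $\gamma(u)\to\infty$; (iii) given any step control with some $|u_i|>K$, construct the modified control on that piece using the $U^M$-directional controls of Assumption \ref{zeroisinrange}/\ref{implicitfunctionassumption} at speed $\delta$, re-traversing the same arc, and pad the remaining global time budget with the $f=0$ control (permissible since adding a stationary segment changes neither the endpoint nor — up to the $L^*$ bound over a controlled extra time — the cost, and this extra time is exactly what the cost gap affords); (iv) check the cost does not increase, using $L_{\inf}(|u_i|) \ge L_*(u_i)$, the coercivity inequality, and finiteness of $L^*(M)$; (v) iterate over all bad pieces to land in $\mathcal{U}^K$, giving $V_\epsilon^K \le V_\epsilon$, and since trivially $V_\epsilon \le V_\epsilon^K$, equality follows; (vi) observe the identical argument applies verbatim to the frozen discrete problem, since $L_{\tau,\epsilon}$ and $V_{\tau,\epsilon}$ are built from the same $L$, $f$ with only macroscopic variables frozen, giving \eqref{effectiveboundednessofvaluediscrete}. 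The main obstacle — and the step I expect to need the most care — is step (iii): managing the global time budget when slowing down large controls, i.e.\ ensuring the reparametrised competitor still satisfies $x(T) $ at terminal time $T$ (respectively the discrete endpoint constraints), which is precisely where the $\lambda$ in \eqref{part1GA} and the ``flexibility'' from Assumption \ref{implicitfunctionassumption} must be spent.
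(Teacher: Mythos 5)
Your overall strategy is the paper's: reduce to step controls via Lemma \ref{StepfunctionLemma}, use the coercivity \eqref{part1GA} to show that any piece where the control exceeds a threshold can be re-traversed at the fixed speed $\delta$ from \eqref{hereisdelta} using controls in $U^M$ at a cost dominated by the cost saved, and iterate over the bad pieces. The exchange inequality you write down (choose $K$ so that $L_*(u)\ge L^*(M)\,f^*(|u|)/\delta+\dots$ for $|u|>K$) is essentially the inequality that closes Step 6 of the paper's Lemma \ref{bdd-control}.

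However, there is a genuine gap exactly at the point you flag as the main obstacle, and your two proposed fixes do not close it. Re-covering the arc swept out on a bad piece $[t_j,t_{j+1})$ at speed $\delta$ takes time $(t_{j+1}-t_j)|v_j|/\delta$, which exceeds $t_{j+1}-t_j$ whenever $|v_j|>\delta$; the modified control then overshoots the terminal time $T$ (respectively the sub-interval length $h$ in the discrete problem). Padding with the $f=0$ control of Assumption \ref{zeroisinrange} only \emph{adds} time, so it handles the opposite (harmless) case $|v_j|<\delta$; and ``paying for the surplus time out of the cost gap'' cannot work because the terminal time is a hard constraint, not a term in the cost. The paper resolves this with a global time change $\sigma$ (Steps 1--5 of Lemma \ref{bdd-control}): the trajectory is sped up by a factor $(1-\beta)^{-1}$ on the set where $|u|\le N$, with $\beta$ chosen as in \eqref{def-beta} so that the total duration is exactly preserved. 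Two quantitative inputs make this admissible: the coercivity bound shows the displacement accumulated on the bad piece satisfies $(t_{j+1}-t_j)|v_j|\le Wh/\gamma(R-\lambda)$, hence the required $\beta$ is small for $R$ large; and the set $\{|u|\le N\}$ has measure at least $h/2$ once the total cost is bounded by $Wh$. The cost of the slight speed-up is then controlled through Assumption \ref{implicitfunctionassumption} and the Lipschitz continuity of $H$ and $L$ (this perturbation is also why one needs the oscillation of $u$ on each step piece to be at most $\lambda$, which is where the $\lambda$ of \eqref{part1GA} is actually spent --- not in the padding step). Without some such compensating reparametrisation, your step (iii) does not produce an admissible competitor, and the lemma does not follow.
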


\begin{lemma} \label{Boundedness lemma2}

For $\tau >0$ and $\widetilde{u} \in \mathbb{R}^{d}$ such that $\tau  f(x_{0},\widetilde{u}) \leq \eta(|\widetilde{u}|)$, there  exists a $K=K(\widetilde{u})$ such that for all $t_{0} \in [0,T]$ and $x_{0} \in \mathbb{R}^{d}$, $\epsilon>0$ and almost every $\omega$,

\begin{align} 
L_{\tau, \epsilon}(t_{0},x_{0},\widetilde{u},\omega)&=L^{K}_{\tau,\epsilon}(t_{0},x_{0},\widetilde{u},\omega) \label{effectiveboundednessofLagrange} 
\end{align}
and
\begin{align} \widehat{L}_{\tau, \epsilon}(t_{0},x_{0},\widetilde{u},\omega)&=\widehat{L}^{K}_{\tau,\epsilon}(t_{0},x_{0},\widetilde{u},\omega).
\end{align}
\end{lemma}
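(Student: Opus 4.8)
The plan is to show that the infimum defining $L_{\tau,\epsilon}(t_0,x_0,\widetilde u,\omega)$ is unchanged if the competing controls are restricted to $\mathcal U^{K}$, for a $K$ depending only on $\widetilde u$ (and $\tau$) and \emph{not} on $t_0,x_0,\epsilon,\omega$; the identity for $\widehat L_{\tau,\epsilon}$ then follows either by running the same construction while keeping the trajectory — hence the unfrozen arguments $(s,x(s))$ of the Lagrangian and the unfrozen dynamics $f(x(s),\cdot)$ — unchanged away from the region we modify, or via the correspondence between $\Pi$- and $\widehat\Pi$-controls tracing the same trajectory recorded after \eqref{nonstationaryone}, which is available precisely because $\tau f(x_0,\widetilde u)\le\eta(|\widetilde u|)$. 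Since $\Pi_{t_0,x_0,\tau,\widetilde u}\cap\mathcal U^{K}\subseteq\Pi_{t_0,x_0,\tau,\widetilde u}$ already gives $L^{K}_{\tau,\epsilon}\ge L_{\tau,\epsilon}$, it suffices, for every $\delta'>0$, to produce a $\bar u\in\Pi_{t_0,x_0,\tau,\widetilde u}\cap\mathcal U^{K}$ whose cost exceeds $L_{\tau,\epsilon}(t_0,x_0,\widetilde u,\omega)$ by at most $C\delta'$.

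\textbf{Step 1 (a uniform a priori bound) and Step 2 (coercivity concentrates the control).} The constant control $u\equiv\widetilde u$ lies in $\Pi_{t_0,x_0,\tau,\widetilde u}$ and its frozen trajectory is a segment, so $L_{\tau,\epsilon}(t_0,x_0,\widetilde u,\omega)\le\tau L^{*}(|\widetilde u|)$ uniformly. Fix a $\delta'$-optimal $u\in\Pi_{t_0,x_0,\tau,\widetilde u}$. Using $L(t_0,x_0,\cdot,u(s),\omega)\ge L_{\inf}(|u(s)|)$ pointwise and the fact that $L$ is bounded below by $-c_1$ for some $c_1<\infty$ (a consequence of $L_{*}\le L_{\inf}$ and the finiteness of $L_{*}(0)$ in Assumption \ref{finiteproblem}), one gets $\int_{t_0}^{t_0+\tau}L_{\inf}(|u(s)|)\,ds\le\tau L^{*}(|\widetilde u|)+\delta'+c_1\tau=:C_1$, a bound independent of $t_0,x_0,\epsilon,\omega$. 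Since $L_{\inf}$ is non-decreasing with $L_{\inf}(R)\to\infty$, Chebyshev gives $|\{s:|u(s)|>R\}|\le C_1/L_{\inf}(R)$, and, by Assumption \ref{Growthassumption} in the form $f^{*}(|u|)\le f^{*}(|u|+\lambda)=L_{*}(u)/\gamma(u)\le L_{\inf}(|u|)/\gamma(u)$, also $\int_{\{|u|>R\}}|f(x_0,u(s))|\,ds\le C_1/\inf_{|w|>R}\gamma(w)$; splitting at any fixed $R$ shows in addition $\int_{t_0}^{t_0+\tau}|f(x_0,u(s))|\,ds\le\mathcal F$ for a constant $\mathcal F=\mathcal F(\widetilde u,\tau)$ that, because the mass of $u$ at large levels is small, is bounded away from the ``saturation speed'' of $f(x_0,\cdot)$ by a fixed margin.

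\textbf{Step 3 (surgery — the crux).} Fix $R=R(\widetilde u,\tau)$ large and then $\ell=\ell(\widetilde u,\tau)$ small; averaging over the position of a length-$\ell$ sub-interval of $[t_0,t_0+\tau]$ yields a window $A=[\alpha,\alpha+\ell]$ on which both $|A\cap\{|u|>R\}|$ and $v_A:=\int_{A}f(x_0,u(s))\,ds$ are at most $\tfrac{\ell}{\tau-\ell}$ times their totals over $[t_0,t_0+\tau]$; in particular $|v_A|/\ell$ stays, for $\ell$ small, strictly below the speed attainable with uniformly bounded controls. Define $\bar u$ to equal $u$ off $A$ and, on $A$, to realize the constant frozen dynamics $v_A/\ell$: by the margin just noted together with Assumptions \ref{zeroisinrange}, \ref{implicitfunctionassumption}, \ref{convexassumption}, this control can be chosen with $\|\bar u\|_{\infty}$ on $A$ bounded by a constant $K_0=K_0(\widetilde u,\tau)$, and since $\int_{A}f(x_0,\bar u)=v_A$ the frozen trajectory of $\bar u$ coincides with that of $u$ on all of $[t_0,t_0+\tau]\setminus A$. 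Hence $\bar u\in\Pi_{t_0,x_0,\tau,\widetilde u}$, $\|\bar u\|_{\infty}\le K:=\max(R,K_0,|\widetilde u|)$, and the cost changes only on $A$. That this change is $O(\delta')$ is the delicate point: because $A$ meets $\{|u|>R\}$ in a set of measure comparable to the (small) total, and $\bar u$ is uniformly bounded on $A$, the new cost on $A$ is controlled, while $u$ being $\delta'$-optimal prevents its own cost on $A$ from being much smaller — this is exactly where the local Lipschitz continuity of $L$ and of $H$ (Assumptions \ref{Lip-L}, \ref{implicitfunctionassumption}), on which the proof relies as heavily as that of Lemma \ref{effectiveboundednessofvaluelemma}, is used, to compare the two costs on the short window $A$ without letting the factor $\epsilon^{-1}$ act on anything but a set of small measure. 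Letting $\delta'\downarrow0$ gives \eqref{effectiveboundednessofLagrange}; the identity for $\widehat L_{\tau,\epsilon}$ is obtained by the same construction, the trajectory-matching off $A$ ensuring the unfrozen data are untouched there.

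\textbf{Expected main obstacle.} Step 3 is the hard part. One must modify an almost-optimal control so as to \emph{simultaneously} (i) truncate it to a $\widetilde u$-dependent sup-norm and (ii) leave the fast variable $x(s)/\epsilon$ undisturbed except on a set of small measure — any macroscopic-scale slip in the trajectory would be amplified by $\epsilon^{-1}$ and destroy the estimate, so the reroute must exactly preserve the displacement carried over the modified window. Reconciling these two demands is precisely what forces the ``flexibility'' Assumptions \ref{zeroisinrange}, \ref{implicitfunctionassumption}, \ref{convexassumption} together with the coercivity Assumption \ref{Growthassumption}, and it is the only place in the whole argument where they enter in a quantitatively tight way; getting the cost on the excised window genuinely under control (rather than merely bounded) is the step that needs the most care.
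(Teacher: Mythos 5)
Your Steps 1--2 essentially reproduce the paper's a priori bounds: the constant (or constant-dynamics) control gives $L_{\tau,\epsilon}\le \tau L^{*}(|\widetilde u|+\eta(|\widetilde u|))$ uniformly, and coercivity then controls $|\{s:|u(s)|>R\}|$ and $\int_{\{|u|>R\}}|f(x_{0},u(s))|\,ds$. The fatal gap is in Step 3: the surgery as written does not truncate the control. You choose the window $A$ by averaging precisely so that $|A\cap\{|u|>R\}|$ is only an $\ell/(\tau-\ell)$ fraction of $|\{|u|>R\}|$ --- so the exceedance set lies almost entirely \emph{outside} $A$ --- and you then set $\bar u=u$ off $A$. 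Hence $\bar u$ still exceeds $R$ on $\{|u|>R\}\setminus A$, which has positive measure whenever the original exceedance set does, and the claimed bound $\|\bar u\|_{\infty}\le\max(R,K_{0},|\widetilde u|)$ is false; the only available bound on $\bar u$ off $A$ is $\|u\|_{\infty}$, which depends on the competing control rather than on $\widetilde u$. To truncate, one must modify $u$ \emph{on} the exceedance set itself, and there the real difficulty surfaces: excising an arbitrary measurable set, even of tiny measure, shifts the trajectory --- hence the fast variable $x(s)/\epsilon$ --- everywhere downstream unless the displacement is matched locally on each excised piece.

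This is exactly what the paper's Lemma \ref{bdd-control} is engineered to do. It first reduces to controls with piecewise-constant dynamics and oscillation at most $\lambda$ per piece (Lemma \ref{StepfunctionLemma}), so the exceedance set is a finite union of intervals; it then takes the \emph{last} interval $[t_{j},t_{j+1})$ where $|u|\ge R$, replaces the motion there by a traverse of the same displacement $(t_{j+1}-t_{j})v_{j}$ at the guaranteed speed $\delta$ of \eqref{hereisdelta}, and absorbs the resulting time discrepancy by a global time change that slightly speeds up the trajectory where $|u|<N$. Crucially, the paper does not try to show the cost change is $O(\delta')$: it shows the cost strictly \emph{decreases}, because the removed contribution $(t_{j+1}-t_{j})L_{*}(u(\hat{t}_{j}))$ dominates every added cost, each of order $(t_{j+1}-t_{j})|v_{j}|/\delta\le (t_{j+1}-t_{j})L_{*}(u(\hat{t}_{j}))/(\delta\,\gamma(R-\lambda))$, by Assumption \ref{Growthassumption}; finitely many iterations (one per exceedance interval, with no new exceedances created) then finish the proof. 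Your appeal to $\delta'$-optimality to control the cost on $A$ also points the wrong way --- admissibility of $\bar u$ only lower-bounds its cost --- and the residual error in your scheme is $O(\ell)$ with $\ell$ fixed, not $O(\delta')$. As it stands, Step 3 needs to be replaced by an argument of the paper's type rather than repaired.
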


\noindent These $K$ will be referred to throughout the paper.

\begin{remark} These lemmas are not to be taken for granted. In the context of more general control theory, neither  Lemma \ref{StepfunctionLemma}, nor Lemmas \ref{Boundedness lemma2}, \ref{effectiveboundednessofvaluelemma} or even the fact that we are from the outset minimizing over controls in $L^{\infty} \left(  [0,T] \rightarrow \mathbb{R}^{d} \right)$, is something which is justified without strict assumptions. In the calculus of variations setting, we might more generally minimize over the set of measurable controls, $\mathcal{M}$. However there are examples in which 

\begin{equation}
\inf_{u \in \mathcal{M}}  J(t,x,u) <  \inf_{u \in \mathcal{U}}  J(t,x,u).
\end{equation}
\indent Such examples are said to exhibit the Lavrentiev  phenomenon. This shows that a careful choice of  the space of minimizers is required.  A particular one-dimensional example is given in \cite{JohnBall1985}, which shows that for the Lagrangian 

\begin{equation} \label{Lavrentievphenomenon}
L(t,x,u)= (t^{4}-x^{6})^{2} |u|^{s} + \epsilon |u|^{2}
\end{equation}
on the time interval $[-1,1]$, the  Lavrentiev  phenomenon is exhibited for some $\epsilon >0$ and $s \geq 27$, under the boundary conditions $x(-1)=k_{1}, x(1)=k_{2}$ for some $k_{1},k_{2}$.
Furthermore the Lagrangian in \eqref{Lavrentievphenomenon} satisfies every assumption in our hypothesis of Lemma \ref{Boundedness lemma2} except the uniform Lipschitz continuity with respect to time, as given by the Lipschtiz constant $\|L\|_{Lip}$. If we attempted to replicate the proof of Lemma \ref{Boundedness lemma2}, as given in Section $3$ for the Lagrangian in \eqref{Lavrentievphenomenon}, it would fail for that very reason. Thus we see that the results of this paper rely delicately on the assumptions.
\end{remark}

\subsection{Continuity of the discretized control problem and value function}
\indent In this subsection we study the error of using a stationary approximation to the control problem and prove continuity properties of $L_{\tau,\epsilon}$ and $V_{\epsilon}$. These lemmas will then be applied in the next subsection to analyse the continuity of the effective Lagrangian $\widetilde{L}$ and to show that it is almost surely constant in $\Omega$.\\

Lemma \ref{Boundedness lemma2} is key for the following  two lemmas. The first of these will be used, albeit in a different form, to show convergence of the Value functions in Subsection $2.5$. It illustrates the error obtained if we use a stationary approximation.

\begin{lemma} \label{whythisworks}
For $\tau | f^{*}(K)| \leq |\eta(K)| $ where $K=K(\widetilde{u})$ as in Lemma \ref{Boundedness lemma2}, we have
\begin{multline}
  \big | L_{\tau, \epsilon} (t_{0},x_{0},\widetilde{u},\omega) - \widehat{L}_{\tau, \epsilon} (t_{0},x_{0},\widetilde{u},\omega) \big | \\  \leq    \tau^{2} \big [ \|L\|_{Lip} + f^{*} (K)  \|L\|_{Lip}^{K+\eta(K)} \|H\|_{Lip}^{K+\eta(K)}  \big ] + \tau m_{L}^{ K+\eta(K)}(\tau f^{*}(K)). 
\end{multline}

 \end{lemma}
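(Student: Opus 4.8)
The plan is to prove the two one-sided estimates $\widehat{L}_{\tau,\epsilon} \le L_{\tau,\epsilon} + E$ and $L_{\tau,\epsilon} \le \widehat{L}_{\tau,\epsilon} + E$, where $E = \tau^{2}\bigl[\|L\|_{Lip} + f^{*}(K)\|L\|_{Lip}^{K+\eta(K)}\|H\|_{Lip}^{K+\eta(K)}\bigr] + \tau\,m_L^{K+\eta(K)}(\tau f^{*}(K))$ is the claimed bound; adding them gives the estimate on the absolute value. In both directions the idea is identical: fix a control that is $\delta$-optimal for one of the two problems, invoke Lemma~\ref{Boundedness lemma2} to assume it lies in $\mathcal{U}^{K}$ with $K = K(\widetilde{u})$, and then use the local inverse $H$ of Assumption~\ref{implicitfunctionassumption} to manufacture a \emph{feasible} control for the other problem which traces out \emph{exactly the same trajectory}. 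Tracing the same trajectory is the whole point: the unstable fast variable $x(s)/\epsilon$ then enters both integrands identically, so the cost difference only sees the macroscopic arguments $t$, $x$ and the control, and those are controlled by Assumption~\ref{Lip-L}.

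For $\widehat{L}_{\tau,\epsilon} \le L_{\tau,\epsilon} + E$: let $u \in \Pi_{t_0,x_0,\tau,\widetilde{u}} \cap \mathcal{U}^{K}$ be $\delta$-optimal for $L_{\tau,\epsilon}$ and let $x(\cdot)$ solve the frozen dynamics $\dot{x}(s) = f(x_0,u(s))$, $x(t_0)=x_0$. Since $|x(s)-x_0| \le \tau f^{*}(K) \le \eta(K)$ by hypothesis, for each $s$ we may set $\widehat{u}(s) := H(x(s), f(x_0,u(s)))$ with $H$ the inverse at base point $(x_0, u(s))$; then $f(x(s),\widehat{u}(s)) = f(x_0, u(s)) = \dot{x}(s)$, so $x(\cdot)$ is simultaneously the true trajectory driven by $\widehat{u}$, hence $\widehat{u} \in \widehat{\Pi}_{t_0,x_0,\tau,\widetilde{u}}$, and $\widehat{u}(s) \in B_{\eta(K)}(u(s)) \subset U^{K+\eta(K)}$ with $|\widehat{u}(s)-u(s)| \le \|H\|_{Lip}^{K+\eta(K)}\,|x(s)-x_0| \le \|H\|_{Lip}^{K+\eta(K)}\tau f^{*}(K)$ (using Lipschitz equicontinuity of the family $H$ and $H(x_0,u(s))=u(s)$). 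Bounding $\widehat{L}_{\tau,\epsilon}$ by the cost of $\widehat{u}$ and comparing integrands pointwise, the difference $|L(s,x(s),x(s)/\epsilon,\widehat{u}(s),\omega) - L(t_0,x_0,x(s)/\epsilon,u(s),\omega)|$ splits, via Assumption~\ref{Lip-L}, into $\|L\|_{Lip}|s-t_0|$, then $m_L^{K+\eta(K)}(|x(s)-x_0|)$ (the fast variables coincide, so only the macroscopic $x$-increment enters), then $\|L\|_{Lip}^{K+\eta(K)}|\widehat{u}(s)-u(s)|$; integrating over $[t_0,t_0+\tau]$ and inserting the bounds above yields precisely $E$. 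Letting $\delta \to 0$ gives the inequality.

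The reverse inequality is symmetric: take $\widehat{u} \in \widehat{\Pi}_{t_0,x_0,\tau,\widetilde{u}} \cap \mathcal{U}^{K}$ $\delta$-optimal for $\widehat{L}_{\tau,\epsilon}$, let $\widehat{x}(\cdot)$ be its true trajectory (again $|\widehat{x}(s)-x_0| \le \tau f^{*}(K) \le \eta(K)$), and put $u(s) := H(x_0, f(\widehat{x}(s),\widehat{u}(s)))$ with $H$ the inverse at base point $(\widehat{x}(s),\widehat{u}(s))$, so that $f(x_0,u(s)) = \dot{\widehat{x}}(s)$, the frozen trajectory driven by $u$ is $\widehat{x}(\cdot)$, $u \in \Pi_{t_0,x_0,\tau,\widetilde{u}}$, and $|u(s)-\widehat{u}(s)| \le \|H\|_{Lip}^{K+\eta(K)}\tau f^{*}(K)$; the same pointwise splitting of $L$ then gives $L_{\tau,\epsilon} \le \widehat{L}_{\tau,\epsilon} + E$. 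I expect the only genuine obstacle to be the bookkeeping around Assumption~\ref{implicitfunctionassumption}: checking that the constructed trajectories never leave the ball $B_{\eta(K)}$ on which the local inverse is defined (this is exactly what $\tau|f^{*}(K)| \le |\eta(K)|$ buys) and that the base point of $H$ stays in the admissible parameter set $\mathbb{R}^{d}\times U^{K}$ for every $s$; the minor point that $\|H\|_{Lip}^{K}$, $\|L\|_{Lip}^{K}$ and $m_L^{K}$ may be replaced by their values at the enlarged radius $K+\eta(K)$ uses only monotonicity in the radius. Everything else is the routine increment estimate for $L$.
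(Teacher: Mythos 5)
Your proposal is correct and follows essentially the same route as the paper: in each direction you take a near-optimal control in $\mathcal{U}^{K}$ (justified by Lemma \ref{Boundedness lemma2}), use the inverse map $H$ of Assumption \ref{implicitfunctionassumption} to build a feasible control for the other problem tracing out the identical trajectory (so the fast variable cancels), and then estimate the integrand difference via Assumption \ref{Lip-L}, exactly as in the paper's proof. Your version merely spells out both one-sided inequalities and the base points of $H$ more explicitly than the paper does.
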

 \begin{proof}
Take any control $u \in \widehat{\Pi}_{t_{0},x_{0},\tau,y}$. Without loss of generality, we can assume that $\|u\|_{\infty} \leq K$. This implies that $\sup_{s \in [t_{0},t_{0}+\tau]} |x(s)-x_{0}| \leq \tau f^{*}(K)$. Therefore we may use Assumption \ref{implicitfunctionassumption} and deduce that for each $s \in [t_{0},t_{0}+\tau]$ there will exist a $\bar{u} \in  \mathcal{U}^{K+\eta(K)}$ such that
 
 \begin{equation}
 f(x_{0},\bar{u}(s)) =f(x(s),u(s))   
 \end{equation}
 
 \noindent and
 
 \begin{equation}
 |\bar{u}(s) - u(s)| \leq \|H\|^{K+\eta(K)}_{Lip} (|\tau f ^{*}(K)| )
 \end{equation}
this control  follows the same trajectory, in that $\bar{x}(s)=x(s)$. Then 
 \begin{multline} 
  \left | \int_{t_{0}}^{t_{0}+\tau} L \left( t_{0},x_{0}, \frac{x(s)}{\epsilon}, \bar{u}(s), \omega \right) ds - \int_{t_{0}}^{t_{0}+\tau} L \left( s,x(s), \frac{x(s)}{\epsilon}, u(s), \omega \right) ds \right| \\ 
  \leq  \tau^{2} \big [ \|L\|_{Lip} + f^{*} (K)  \|L\|_{Lip}^{K+\eta(K)} \|H\|_{Lip}^{K+\eta(K)}  \big ] + \tau m_{L}^{ K+\eta(K)}(\tau f^{*}(K)) 
  \end{multline}
  
 as the control was arbitrary, this proves that 
  \begin{multline}
\widehat{L}_{\tau, \epsilon} (t_{0}.x_{0},\widetilde{u}, \omega) +\tau^{2} \big [ \|L\|_{Lip} + f^{*} (K)  \|L\|_{Lip}^{K+\eta(K)} \|H\|_{Lip}^{K+\eta(K)}  \big ] + \tau m_{L}^{ K+\eta(K)}(\tau f^{*}(K))  \\ \geq L_{\tau, \epsilon} (t_{0}.x_{0},\widetilde{u},\omega).
  \end{multline} 
  The converse is proved in the same fashion and this  completes the proof.
 \end{proof}

We turn our attention to obtain a modulus of continuity on $L_{\tau, \epsilon}$. 

\begin{lemma} \label{contlemma}

For a given $t_{0} \in [0,T]$, $x_{0},y \in \mathbb{R}^{d}$, $\epsilon>0$ and $\tau_{1}, \tau_{2} > 0$ and $\widetilde{u}_{1}, \widetilde{u}_{2} \in \mathbb{R}^{d}$  are such that $\tau  > \delta^{-1} \big ( |\tau_{1} f( x_{0}, \widetilde{u}_{1}) - \tau_{2} f(x_{0}, \widetilde{u}_{2})| \big ) +\epsilon y $. Then define 

\begin{equation} \label{defofbeta}
   \beta_{1}=\frac{\tau_{1}}{\tau_{2} -\delta^{-1} \big ( |\tau_{1} f( x_{0}, \widetilde{u}_{1}) - \tau_{2} f(x_{0}, \widetilde{u}_{2})| +\epsilon y \big )}, 
   \end{equation}
   \begin{equation}
   \beta_{2}=\frac{\tau_{2}}{\tau_{1} -\delta^{-1} \big ( |\tau_{2} f( x_{0}, \widetilde{u}_{2}) - \tau_{1} f(x_{0}, \widetilde{u}_{1})| +\epsilon y \big )}, 
   \end{equation}
   
   and
   
   \begin{equation}
   \widetilde{\beta}^{*} := \textnormal{argmax}_{\beta_{1},\beta_{2}} | \beta_{i} -1| 
   \end{equation}
   
    \begin{equation}
   \widetilde{\beta}_{*} :=  \textnormal{argmax}_{\beta_{1},\beta_{2}} | 1/ \beta_{i} -1|, 
   \end{equation}
   
 \noindent $\tau:= \tau_{1} \wedge \tau_{2}$ and $K:=K(\widetilde{u}_{1}) \wedge K(\widetilde{u}_{2})$, where $K(\widetilde{u}_{1})$ and $K(\widetilde{u}_{2})$ are as in Lemma \ref{Boundedness lemma2}. Assume $ |\widetilde{\beta}^{*} -1| \leq \eta(K)$.  Then for every $\omega \in \Omega$ almost surely,
 \begin{multline}
 \big |L_{\tau_{1}, \epsilon} (t_{0},x_{0}, u_{1},\omega) -L_{\tau_{2}, \epsilon} (t_{0},x_{0},u_{2},\omega) \circ \varphi_{y}  |
 \\
  \leq \tau L^{*}(K) |1 / \beta_{*} -1 |  + \tau \|L\|_{Lip}^{K+\eta(K)} \|H\|^{K +\eta(K)}_{Lip} f^{*}(K)| \beta^{*} - 1 | +\frac{L^{*}(M)}{\delta}   ( \|f\|^{K}_{Lip} |u_{1} - u_{2}|+\epsilon y).
 \end{multline}

\end{lemma}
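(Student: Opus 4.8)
The plan is to establish the two one-sided inequalities $L_{\tau_1,\epsilon}(t_0,x_0,\widetilde u_1,\omega)\le L_{\tau_2,\epsilon}(t_0,x_0,\widetilde u_2,\omega)\circ\varphi_y+(\mathrm{RHS})$ and its mirror image with the indices $1$ and $2$ interchanged, each time by taking a near-optimal control for the problem on the right and manufacturing from it an admissible competitor for the problem on the left whose trajectory traces out \emph{exactly the same curve in the fast variable}, up to the translation by $y$. Since the two cases are symmetric, the pair $\beta_1,\beta_2$ together with the choices $\widetilde\beta^{*},\widetilde\beta_{*}$ are exactly the device that fuses them into one bound, so I will only describe the first inequality.

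The first move is to absorb the shift. By Assumption~\ref{stationaryergodicassumption}, $L(t_0,x_0,z,u,\varphi_y\omega)=L(t_0,x_0,z+y,u,\omega)$, so $L_{\tau_2,\epsilon}(t_0,x_0,\widetilde u_2,\omega)\circ\varphi_y$ equals the infimum over $u\in\Pi_{t_0,x_0,\tau_2,\widetilde u_2}$ of $\int_{t_0}^{t_0+\tau_2}L(t_0,x_0,x(s)/\epsilon+y,u(s),\omega)\,ds$, that is, the cost of the same frozen problem with the trajectory translated by $\epsilon y$ inside the fast slot. I would pick a control $u$ nearly attaining this infimum; by Lemma~\ref{Boundedness lemma2} (with $K=K(\widetilde u_1)\wedge K(\widetilde u_2)$) we may assume $\|u\|_\infty\le K$, and then its frozen trajectory $x(s)=x_0+\int_{t_0}^{s}f(x_0,u(r))\,dr$ runs from $x_0$ to $x_0+\tau_2 f(x_0,\widetilde u_2)$ with $|x(s)-x_0|\le\tau_2 f^{*}(K)$.

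Next I would build the competitor $\bar u$ on $[t_0,t_0+\tau_1]$ in two pieces. On a detour of total length $d:=\delta^{-1}\big(|\tau_1 f(x_0,\widetilde u_1)-\tau_2 f(x_0,\widetilde u_2)|+\epsilon y\big)$, split between the two ends of the interval, I would use the flexibility~\eqref{hereisdelta} — controls in $U^{M}$ realizing speed $\delta$ in any prescribed direction — to absorb the endpoint discrepancy $\tau_1 f(x_0,\widetilde u_1)-\tau_2 f(x_0,\widetilde u_2)$ together with the translation $\epsilon y$; the standing hypotheses on $\tau$ are exactly what make $\tau_1-d>0$ and keep the denominators of $\beta_1,\beta_2$ positive. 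On the remaining interval of length $\tau_1-d$ I would traverse the translated curve $x(\cdot)+\epsilon y$ after an affine time-reparametrization $\sigma$ of slope $\tau_2/(\tau_1-d)=\beta_2$, so that the required velocity there is $\beta_2 f(x_0,u(\sigma(\cdot)))$. Since $|\widetilde\beta^{*}-1|\le\eta(K)$, Assumption~\ref{implicitfunctionassumption} supplies a control $\bar u(s)\in\mathcal U^{K+\eta(K)}$ with $f(x_0,\bar u(s))=\beta_2 f(x_0,u(\sigma(s)))$ and $|\bar u(s)-u(\sigma(s))|\le\|H\|^{K+\eta(K)}_{Lip}f^{*}(K)\,|\widetilde\beta^{*}-1|$. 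By construction the fast variable of $\bar u$'s trajectory reproduces $x(\cdot)/\epsilon+y$ on the main piece, so the microscopic error there is \emph{zero} — precisely the point that~\eqref{outline-fail} shows one cannot leave to a continuity estimate.

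It remains to bound $\int_{t_0}^{t_0+\tau_1}L(t_0,x_0,\bar x(s)/\epsilon,\bar u(s),\omega)\,ds$ against $L_{\tau_2,\epsilon}(t_0,x_0,\widetilde u_2,\omega)\circ\varphi_y$. On the detour the integrand is at most $L^{*}(M)$, so that piece contributes at most $\tfrac{L^{*}(M)}{\delta}$ times the endpoint discrepancy plus $\epsilon|y|$, which after bounding the discrepancy by $\|f\|^{K}_{Lip}|\widetilde u_1-\widetilde u_2|$ (Assumption~\ref{Lip-f}) gives the last term of the claimed bound. On the main piece the substitution $r=\sigma(s)$ turns the integral into $\beta_2^{-1}\int_{t_0}^{t_0+\tau_2}L(t_0,x_0,x(r)/\epsilon+y,\bar u(\sigma^{-1}(r)),\omega)\,dr$; comparing this with $\int_{t_0}^{t_0+\tau_2}L(\cdots,u(r),\cdots)\,dr$, the Jacobian $\beta_2^{-1}$ contributes the length-mismatch term $\tau L^{*}(K)\,|1/\widetilde\beta_{*}-1|$ (using $|L(\cdots,u,\cdots)|\le L^{*}(K)$), and replacing $\bar u\circ\sigma^{-1}$ by $u$ via the $u$-Lipschitz bound of Assumption~\ref{Lip-L} contributes $\tau\|L\|^{K+\eta(K)}_{Lip}\|H\|^{K+\eta(K)}_{Lip}f^{*}(K)\,|\widetilde\beta^{*}-1|$. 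Passing to the infimum in the left-hand problem and letting the near-optimality gap vanish gives the first inequality, and the mirror inequality is identical with $1\leftrightarrow2$. The step I expect to be the real obstacle is the joint bookkeeping of the three adjustments — endpoint reconciliation, reparametrization, and environment shift — while keeping every auxiliary control inside the bands $U^{K}$, $\mathcal U^{K+\eta(K)}$, $U^{M}$ on which boundedness, the local Lipschitz continuity of $L$, the invertibility through $H$, and Lemma~\ref{Boundedness lemma2} are all simultaneously available; in particular one must check that $|\widetilde\beta^{*}-1|\le\eta(K)$ and the standing hypothesis on $\tau$ genuinely keep $\beta_2 f(x_0,u\circ\sigma)$ in the domain of $H$ and the detour feasible, and that the fast variable is matched on the nose rather than merely approximately.
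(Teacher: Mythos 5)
Your proposal is correct and follows essentially the same route as the paper's proof: an explicit three-piece competitor control (a speed-$\delta$ detour of length $\delta^{-1}(|\tau_1 f(x_0,\widetilde u_1)-\tau_2 f(x_0,\widetilde u_2)|+\epsilon|y|)$ absorbing the environment shift and endpoint discrepancy, plus a $\beta$-reparametrized main piece tracing the identical fast-variable curve, with the control recovered through $H$ under the hypothesis $|\widetilde\beta^{*}-1|\le\eta(K)$), and the same three-term error accounting. The symmetric treatment of the two directions via $\beta_1,\beta_2$ and the reduction to $\mathcal U^{K}$ via Lemma \ref{Boundedness lemma2} also match the paper.
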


\begin{proof}

Without any loss of generality we take $t_{0}=0$. Fix any control $u \in \Pi_{0,x_{0},\tau_{1},\widetilde{u}_{1}}$, and assume $\|u\|_{\infty} \leq K$.  We will adopt a similar strategy of defining a control, $\bar{u}$ which traces out the same trajectory. Define $\bar{u}(s) \in \Pi_{0,x_{0},\tau_{2},\widetilde{u}_{2}}$ as follows. For $s \in [0, \tau_{1}- \epsilon \delta^{-1} |y|]$ define $\bar{u}$ such that

\begin{equation}
f(x_{0},\bar{u}(s)) = \delta \frac{ y}{|y|},
\end{equation}
 this $\delta$ is as in equation \eqref{hereisdelta}. The trajectory of $\bar{u}$, denoted $\bar{x}$,  will satisfy $\bar{x}(\epsilon \delta^{-1}  |y|) = x_{0}$.
%
%
%
For  $s \in [\epsilon \delta^{-1} |y|, \tau_{2} - \delta^{-1}| \tau_{1} f(x_{0},u_{1}) - \tau_{2} f(x_{0},u_{2})| ] $ define $\bar{u}(s)$ as the solution to 

\begin{equation} 
f(x_{0},\bar{u}(s))=\beta_{1} f (x_{0},u(\beta s)).
\end{equation}
 This will reach $\tau_{1} f(x_{0}, u_{1})$ by time $ \tau_{2} - \delta^{-1} |\tau_{1} f(x_{0},u_{1}) - \tau_{2} f(x_{0}, u_{2})|$, and the existence of such a control is guaranteed by Assumption \ref{implicitfunctionassumption} and our assumption $|\beta^{*}-1| \leq  \eta(K)$. in the statement of the lemma. During this time note that 
 
 \begin{equation}
 | \bar{u}(s)- u (\beta_{1} s) | \leq \|H\|_{Lip}^{K+\eta(K)} ( f^{*}(K)|\beta_{1} -1 |).
 \end{equation}
 
 and $\bar{x(t)} = x(\beta_{1} t)$. \\
 For $s \in [  \tau_{2} - \delta^{-1} |f(x_{0},\widetilde{u}_{1})-f(x_{0},\widetilde{u}_{2})|,  \tau_{2}  ] $ we define $\bar{u}(s)$ such that
 
 \begin{equation}
 f(x_{0}, \bar{u}(s)) = \delta \frac{  f(x_{0},\widetilde{u}_{1})-f(x_{0},\widetilde{u}_{2}) }{| f(x_{0},\widetilde{u}_{1})-f(x_{0},\widetilde{u}_{2}) |}
 \end{equation}
which ensures $\bar{x}(\tau_{2})=x_{0}+\tau_{2} f(x_{0},\widetilde{u}_{2})$. Then,

   \begin{equation}
\begin{split}
   &\left|\int_{0}^{\tau_{1}} L\left( 0,x_{0},\frac{x(s)}{\epsilon}, u(s), \omega \right ) ds  - \int_{0}^{\tau_{2}} L \left( 0,x_{0},\frac{\bar{x}(s)}{\epsilon}, \bar{u}(s), \omega \right) ds \right| \\ 
&\quad   \leq \left|  \int_{0}^{\tau_{1} } L \left( 0,x_{0},\frac{x(s)}{\epsilon}, u(s), \omega \right)ds   - \frac{1}{\beta_{1}}  \int_{0}^{ \tau_{1} } L \left( 0,x_{0},\frac{x(s)}{\epsilon}, \bar{u} ( \beta_{1}^{-1} s), \omega \right)ds  \right|  \\
&\qquad +\int_{0}^{\epsilon \delta^{-1} y / |y|} L \left( 0,x_{0}, \frac{\bar{x}(s)}{\epsilon} , \bar{u}(s), \omega \right) ds +  \int^{\tau_{2}}_{\tau_{2} - \delta^{-1} |\tau_{1} f(x_{1},u_{1}) - \tau_{2} f(x_{2}, u_{2})|} L \left( 0,x_{0},\frac{x(s)}{\epsilon},\bar{u}(s), \omega  \right) ds.  \\
&\quad     \leq 
     \tau L^{*}(K) |1 / \beta_{1} -1 |  + \tau \|L\|_{Lip}^{K+\eta(K)} \|H\|^{K+\eta(K)}_{Lip} f^{*}(K)|(\beta_{1} - 1) | + \frac{L^{*}(M)}{\delta}   ( \|f\|^{K}_{Lip} |u_{1} - u_{2}|+\epsilon y)
\end{split}   
\end{equation}
The control $u \in \Pi_{0,x_{0},\tau_{1},\widetilde{u}_{1}}$ was arbitrary.  We can repeat the proof for an arbitrary  $\hat{u} \in \widehat{\Pi}_{0,x_{0},\tau_{2},\widetilde{u}_{2}}$ and acquire a similar bound for involving $\beta_{2}$, and then taking the maximum, arrive at the conclusion.
    \end{proof}

 Finally we also wish to get a uniform continuity bound on $V_{\epsilon}(t,x,\omega)$ uniform in $\epsilon$. For this proof, it similarly essential that we have effective compactness stated in Lemma \ref{effectiveboundednessofvaluelemma}. 

 \begin{lemma} \label{Value function cont}
 Let $K$ be as in as in Lemma \ref{effectiveboundednessofvaluelemma}.  Then for all  $\epsilon > 0$, almost every $\omega$ and $x_{i} \in \mathbb{R}^{d}$,  $t_{i} \in [0,T]$, $ i \in \{ 1,2 \}$, 
 
 \begin{equation}
 \begin{split}
 |V_{\epsilon}(t_{1},x_{1}, \omega) - V_{\epsilon}(t_{2},x_{2},\omega)| 
 &  \leq  \frac{L^{*}(M)}{\delta} |x_{1}-x_{2}| + T \|L\|_{Lip} (|t_{1}-t_{2}|+\delta^{-1} |x_{1}-x_{2}|)  \\ & +
  T( |t_{1}-t_{2}| + \delta^{-1} |x_{1}-x_{2}| ) L^{*}(K) + m^{\psi}( |t_{1}-t_{2}|+ \delta^{-1} |x_{1}-x_{2}| ) 
 \end{split}
 \end{equation} 
 
\end{lemma}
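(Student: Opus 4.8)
The plan is to bound $|V_\epsilon(t_1,x_1,\omega)-V_\epsilon(t_2,x_2,\omega)|$ by constructing, from a near-optimal control for one data pair, an admissible control for the other whose cost is close. By symmetry it suffices to show $V_\epsilon(t_1,x_1,\omega) \le V_\epsilon(t_2,x_2,\omega) + (\text{RHS})$. Without loss of generality assume $t_1 \le t_2$ (the case $t_1 > t_2$ is symmetric after relabelling), and fix a control $u^*$ that is $o(1)$-optimal for $(t_2,x_2)$; by Lemma~\ref{effectiveboundednessofvaluelemma} we may take $\|u^*\|_\infty \le K$, so the associated trajectory $x^*(\cdot)$ on $[t_2,T]$ has speed at most $f^*(K)$.

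The construction for the $(t_1,x_1)$ problem proceeds in two phases. First, a \emph{bridging phase}: starting from $x_1$ at time $t_1$, use the flexibility \eqref{hereisdelta} to travel in a straight line from $x_1$ toward $x_2$ at speed $\delta$, using a control bounded by $M$; this reaches $x_2$ after time $\delta^{-1}|x_1-x_2|$. Second, a \emph{tracking phase}: once at $x_2$, I want to trace out the trajectory $x^*(\cdot)$, but the time budget is now shifted and shortened — we have arrived at time $t_1 + \delta^{-1}|x_1-x_2|$ rather than $t_2$, and we only have until $T$. To handle this, run $u^*$ but stop short: follow $x^*$ for a duration that leaves the endpoint at $x^*(T-r)$ for the appropriate shift $r := (t_2 - t_1) + \delta^{-1}|x_1-x_2|$ (here one uses $t_1 \le t_2$ so that $r \ge 0$, meaning we genuinely truncate rather than need extra time; if instead $t_1 > t_2$ one inserts an additional short constant-control segment, again using \eqref{hereisdelta} with cost controlled by $L^*(M) r$ — but the stated bound already absorbs such a term). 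The total cost incurred along this constructed control is: the bridging cost, bounded by $\delta^{-1}|x_1-x_2| \cdot L^*(M)$; the error from time-shifting the Lagrangian's first argument in the tracking phase, bounded using $\|L\|_{Lip}$ times $T$ times the shift $r$; the error from dropping (or adding) a piece of trajectory of length at most $r$, bounded by $T \cdot r \cdot L^*(K)$ (using $\|u^*\|_\infty\le K$); and the terminal cost discrepancy $|\psi(x^*(T-r)) - \psi(x^*(T))| \le m^\psi(r \cdot f^*(K))$ — though the stated bound writes $m^\psi(|t_1-t_2| + \delta^{-1}|x_1-x_2|)$, absorbing the $f^*(K)$ factor into the modulus (or one simply notes $\psi$ is evaluated at points within distance $r f^*(K)$, and enlarges the modulus accordingly). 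Collecting these yields precisely the four terms on the right-hand side.

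A subtle point is that the tracking phase must genuinely reproduce the \emph{microscopic} variable $x^*(s)/\epsilon$ exactly, not merely approximately — as emphasized in the outline, the fast variable is unstable under perturbation. This is why the bridging phase ends \emph{exactly} at $x_2$ and the tracking phase then replays $u^*$ verbatim (up to the time truncation), so that the trajectory of the constructed control coincides pointwise with $x^*$ on the overlapping time interval and the integrand $L(\cdot,\cdot,x^*(s)/\epsilon,u^*(s),\omega)$ is literally the same; the only discrepancies are in the \emph{first} argument (a time shift, controlled by $\|L\|_{Lip}$, hence the factor $T\|L\|_{Lip}$) and in the range of integration. That the bridging control stays within $U^M$ and hence contributes cost at most $L^*(M)$ per unit time is exactly what Assumptions~\ref{zeroisinrange} and~\ref{implicitfunctionassumption} (via \eqref{hereisdelta}) provide.

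The main obstacle is bookkeeping the time budget correctly in both cases $t_1 \le t_2$ and $t_1 > t_2$ and checking that in every case the constructed control is admissible (defined on all of $[t_1,T]$, with the right initial condition) while its cost is bounded by the stated expression; the geometric/analytic content is light, but one must be careful that the truncation length $r$ and the added-segment length are both $\le (t_2-t_1)_+ + \delta^{-1}|x_1-x_2| \le |t_1-t_2| + \delta^{-1}|x_1-x_2|$ so that each error term matches. Once the one-sided bound is established, swapping the roles of $(t_1,x_1)$ and $(t_2,x_2)$ gives the other inequality with the same constants, completing the proof. Note the bound is manifestly uniform in $\epsilon$ and in $\omega$ (it holds on the almost-sure event where Lemma~\ref{effectiveboundednessofvaluelemma} applies), as required.
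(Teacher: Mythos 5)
Your proposal is correct and follows essentially the same approach as the paper: bridge from $x_1$ to $x_2$ at speed $\delta$ using a control in $U^{M}$ via \eqref{hereisdelta}, then replay the near-optimal control verbatim so the microscopic variable is reproduced exactly, and account for the bridging cost, the time-shift in the Lagrangian's first argument, the truncated/added piece of trajectory, and the terminal-cost discrepancy. Your treatment is in fact more careful than the paper's sketch (which omits the case distinction on the sign of $t_1-t_2$ and glosses over the factor $f^{*}(K)$ inside the modulus $m^{\psi}$ that you correctly flag).
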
 

 \begin{proof}
 The concept and argument of the proof is very similar to that of the previous lemma.  If we start from $x_{1}$, we can move to position $x_{2}$ in time $ \delta^{-1} |x_{1}-x_{2}| $. From here we can trace out the same trajectory. The difference that we pick up is accounted as follows
 \begin{enumerate}
 \item The first term is the cost during the time taken for the control to reach $x$. The cost of this is
 
 \begin{equation}
 \int_{t_{1}}^{t_{1} - \delta^{-1} |x_{1}-x_{2}|} L \left(  s,x(s), \frac{x(s)}{\epsilon}, u(s), \omega  \right) ds \leq \frac{L^{*}(M)}{\delta} |x_{1}-x_{2}|
 \end{equation}
 
 for some control $u(s)$ such that 
 
 \begin{equation} 
 f(x(s),u(s))=  \delta \frac{ x_{2}-x_{1} }{|x_{2}-x_{1}|}.
 \end{equation}

 \item The second term accounts for the difference in the time argument of the Lagrangian throughout the imitating control. This is bounded by $\|L\|_{Lip} (|t_{1}-t_{2}|+\delta^{-1} |x_{1}-x_{2}|) $ 
 
 \item The third term follows from the fact that any control attempting to trace out another control which started at $t$, may fall short of time up to a maximum of $|t_{1}-t_{2}|+ \delta^{-1} |x_{1}-x_{2}|$. However over any period of time, we can assume that the Lagrangian is bounded proportional to $L^{*}(K)$
 
 \item The last term comes from the difference in the terminal cost, which by  Assumption \ref{Terminalcostcont} is less then $m^{\psi}( |t_{1}-t_{2}|+ \delta^{-1} |x_{1}-x_{2}| ) $
 
  \end{enumerate}

  \end{proof}

 \subsection{Subadditive Ergodic theorem and effective Lagrangian}
 In this subsection we state a continuous parameter version of the sub-additive ergodic theorem to deduce \eqref{convergenceofsubaddergotheorem} almost surely and define the effective Lagrangian. \\

 Define the set $Q_{c} = \{ (a,b) \in \mathbb{R}^{2} \: : \:  0 \leq a < b \}$ Let $F=\{  F_{a,b} : (a,b) \in Q_{c} \}$ be a family of integrable real valued functions on our probability space $(\Omega, \mathcal{F}, P)$.  Let $\{ \widetilde{\varphi}_{r}  : r \geq 0 \}$ be a family of measure preserving mappings acting on $\Omega$. The family $F_{a,b}$ is called a continuous sub-additive process if the conditions
  
  \begin{enumerate}
  \item[(C1)]  $F_{a,b}  \circ \widetilde{\varphi}_{r} =   F_{a +r, b + r}$  \:\: ($(a,b) \in Q_{c}, \: l \geq 0$ ) 
  \item[(C2)]  $F_{a,l} \leq F_{a,b} + F_{b,l}$ \: \: ($(a,l),(b,l) \in Q_{c}$)
  \item[(C3)]   $\mathbb{E}(F_{0,b}) \geq \gamma b$ for some $\gamma > -\infty$
   \end{enumerate}
are satisfied and $F_{a,b}(\omega)$ is a measurable map of $Q_{c} \times \Omega \rightarrow \mathbb{R}$ with respect to the product $\sigma$-algebra in $Q_{c} \times \Omega$.  The expection is with respect to the measure $P$.\\\\
 For the statement and proof of the following proposition we refer to \cite{krengel1985ergodic}.
  
\begin{prop} \label{Sub-additive ergodic theorem}
Suppose that $\{ F_{a,b} \}_{(a,b) \in Q_{c}} $ satisfies (C1)-(C3), is continuous in $a,b$ and there exists a $K(a,b)$ such that  $|F_{a,b}(\omega)|  \leq K(a,b)$ a.s. \footnote{In the formulation given in \cite{krengel1985ergodic}, the conditions are weaker. $F_{a,b}$ is assumed to be  separable and has integrable oscillations. We refer the reader to \cite{krengel1985ergodic} for further details.}  Then

\begin{equation}
\lim_{b \rightarrow \infty} b^{-1} F_{0,b}
\end{equation}

\noindent exists a.s. and is $\widetilde{\varphi}_{r}$ invariant for all $r \geq 0$.
 \end{prop}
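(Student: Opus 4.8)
The plan is to deduce this continuous-parameter statement from the classical discrete (Kingman) subadditive ergodic theorem by an integerization--plus--interpolation argument. First I would set $G_n := F_{0,n}$ for $n \in \mathbb{N}$ and note that (C2) together with (C1) gives $G_{m+n} = F_{0,m+n} \le F_{0,m} + F_{m,m+n} = G_m + G_n \circ \widetilde{\varphi}_m$, while the a.s.\ bound $|F_{a,b}| \le K(a,b)$ makes each $G_n$ integrable and (C3) forces $\mathbb{E}[G_n] \ge \gamma n$. Hence $(G_n)_{n \ge 1}$ is a stationary, integrable, discrete subadditive process for the measure-preserving map $\widetilde{\varphi}_1$, and Kingman's theorem yields a $\widetilde{\varphi}_1$-invariant limit $\bar{F} := \lim_{n \to \infty} n^{-1} F_{0,n}$ a.s., lying in $L^1$ with $\mathbb{E}[\bar{F}] = \inf_n n^{-1}\mathbb{E}[F_{0,n}]$ (the expectations being subadditive by (C1)--(C2), and finite because $\mathbb{E}[F_{0,1}] \le K(0,1) < \infty$ and bounded below by $\gamma$).

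Next I would upgrade the limit along integers to a limit along all real $b \to \infty$. For $b \in [n, n+1)$ with $n = \lfloor b \rfloor$, subadditivity gives $F_{0,b} \le F_{0,n} + F_{n,b}$ and $F_{0,n+1} \le F_{0,b} + F_{b,n+1}$, so it suffices to show the unit-length boundary increments are $o(n)$ a.s., uniformly in $b$. By (C1) these increments equal $F_{0,b-n}\circ\widetilde{\varphi}_n$ and $F_{b-n,1}\circ\widetilde{\varphi}_n$, hence are dominated in absolute value by $\Phi \circ \widetilde{\varphi}_n$ where $\Phi := \sup_{0 \le s \le t \le 1} |F_{s,t}|$; using the continuity of $F_{a,b}$ in $(a,b)$ and the (locally bounded) deterministic control $K$, one sees $\Phi \le \sup_{0\le s \le t \le 1} K(s,t) < \infty$, so on a set of full measure $\Phi\circ\widetilde{\varphi}_n$ is uniformly bounded and $n^{-1}\Phi\circ\widetilde{\varphi}_n \to 0$. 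Since $n/b \to 1$, this gives $\limsup_{b\to\infty} b^{-1} F_{0,b} \le \bar{F} \le \liminf_{b\to\infty} b^{-1} F_{0,b}$, i.e.\ $\lim_{b\to\infty} b^{-1} F_{0,b} = \bar{F}$ a.s.

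Finally, for $\widetilde{\varphi}_r$-invariance with arbitrary $r \ge 0$: composing the real-parameter limit with the measure-preserving $\widetilde{\varphi}_r$ and using $F_{r,b+r} = F_{0,b}\circ\widetilde{\varphi}_r$ gives $\bar{F}\circ\widetilde{\varphi}_r = \lim_{b\to\infty} b^{-1} F_{r,b+r}$ a.s.; the split $F_{0,b+r} \le F_{0,r} + F_{r,b+r}$ with $b^{-1}F_{0,r} \to 0$ then forces $\bar{F} \le \bar{F}\circ\widetilde{\varphi}_r$ a.s. Because $\widetilde{\varphi}_r$ preserves $P$ and $\bar{F} \in L^1$, we have $\mathbb{E}[\bar{F}\circ\widetilde{\varphi}_r] = \mathbb{E}[\bar{F}]$, and a nonnegative random variable of zero mean vanishes, so $\bar{F}\circ\widetilde{\varphi}_r = \bar{F}$ a.s.

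The step I expect to be the main obstacle is the interpolation estimate: showing that the unit-length boundary increments of $F$ are negligible after division by $n$, uniformly in the continuous time parameter. This is precisely where the continuity in $(a,b)$, the deterministic majorant $K$, and the stationarity under $\widetilde{\varphi}$ must be combined; everything else is a faithful transcription of the discrete theorem. One can also bypass a self-contained interpolation argument entirely by invoking the continuous-parameter subadditive ergodic theorem of \cite{krengel1985ergodic} directly, which (given its measurability and integrable-oscillation hypotheses are met here) is the route we follow.
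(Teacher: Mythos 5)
The paper does not prove this proposition at all: it is quoted verbatim from the literature, with the sentence ``For the statement and proof of the following proposition we refer to \cite{krengel1985ergodic}'' standing in for a proof. Your closing remark that one may simply invoke Krengel's continuous-parameter subadditive ergodic theorem is therefore exactly what the paper does, and your self-contained reduction to the discrete Kingman theorem is genuinely extra content. That reduction is essentially sound: the integerized process $G_n=F_{0,n}$ is stationary subadditive for $\widetilde{\varphi}_1$ by (C1)--(C2), integrable by the pointwise bound, and bounded below in mean by (C3), so Kingman applies; the interpolation inequalities $F_{0,b}\le F_{0,n}+F_{n,b}$ and $F_{0,n+1}\le F_{0,b}+F_{b,n+1}$ together with (C1) correctly reduce everything to controlling $\Phi\circ\widetilde{\varphi}_n$ with $\Phi=\sup_{0\le s\le t\le 1}|F_{s,t}|$; and the invariance argument (a nonnegative integrable difference with zero mean vanishes) is the standard one.

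The one step you state too quickly is the claim that $\Phi\circ\widetilde{\varphi}_n$ is ``uniformly bounded'' on a set of full measure. As stated, the proposition only gives a pointwise-in-$(a,b)$ majorant $K(a,b)$ with no continuity or local boundedness assumed, so $\sup_{0\le s\le t\le 1}K(s,t)$ could a priori be infinite; continuity of $F$ in $(a,b)$ only gives $\Phi<\infty$ a.s., and an a.s.\ finite $\Phi$ does not by itself make $\Phi\circ\widetilde{\varphi}_n$ uniformly bounded in $n$. You should either (i) add the (harmless, and satisfied in the paper's application, where $K(a,b)=(b-a)L^{*}(K(\widetilde{u}))$) hypothesis that $K$ is bounded on compact parameter sets, which gives a deterministic uniform bound, or (ii) argue that $\Phi\in L^{1}$ and invoke the standard fact that $n^{-1}X_n\to 0$ a.s.\ for any stationary integrable sequence $X_n=\Phi\circ\widetilde{\varphi}_n$. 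This is precisely the role played by the ``integrable oscillations'' hypothesis in Krengel's formulation that the paper's footnote alludes to. With that repair your argument is complete and gives a more elementary, self-contained route than the bare citation in the paper.
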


 To apply the sub-additive ergodic theorem to $L_{\tau,\epsilon}$, we need to reintepret our problem as ``long term averages.''\\

 For any given control, $u(s) \in \Pi_{t,x,\tau,\widetilde{u}}$ we have that by substitution $ \widetilde{s}=s / \epsilon$
 
 \begin{equation} \label{scaleddynamics}
 \frac{x(t+\tau)}{\epsilon} =  \frac{x_{0}}{\epsilon}  +   \int_{t/\epsilon }^{ (t+\tau) / \epsilon } f(x_{0} ,u(\epsilon \widetilde{s})) d\widetilde{s} = : \widetilde{x}(t+\tau) .
 \end{equation}
%
%
After the same substitution $\widetilde{s}$ our object of interest is equivalent to

  \begin{equation} \label{scaling}
L_{\tau, \epsilon}(t_{0},x_{0},\widetilde{u},\omega) = \inf_{u \in \Pi_{t,x,\tau / \epsilon,\widetilde{u}/ \epsilon }} \epsilon  \int_{t/\epsilon}^{(t+\tau)/\epsilon} L(t_{0},x_{0},\widetilde{x}(\widetilde{s}), u(\widetilde{s}),\omega)  d \widetilde{s}.
 \end{equation}  
 The distinguishment between $(\widetilde{x}(s), \widetilde{s})$ and $(x(s), s)$ is superficial, we shall henceforth write $(x(s), s)$.  For our purpose, we then define $F_{a,b}$ as 
 
 \begin{equation}
 F_{a,b} :=  \inf_{u \in \Pi_{t_{0},x_{0}+a f (x_{0}, \widetilde{u}), b-a, \widetilde{u}}} \int_{t_{0}}^{t_{0}+b-a} L(t_{0},x_{0},x(s), u(s),\omega) ds.
 \end{equation}
 
 \noindent Then using equation \eqref{scaling}, for each $\epsilon >0$ this is equivalent to

\begin{equation} \label{weusethiswhenwe talkaboutF}
\frac{1}{\epsilon} L_{\epsilon (b-a), \epsilon}
(\epsilon t_{0}, x_{0}+\epsilon a f (x_{0}, \widetilde{u}),  \widetilde{u}, \omega ) .
\end{equation}
 
 \noindent The family of measure preserving transformations $\{ \widetilde{\varphi}_{r}  : r \geq 0 \}$ is then defined as $\varphi_{rf(x_{0},\widetilde{u})}$. Thus by Assumption $1$,
 
 \begin{align}
  F_{a,b} \circ \widetilde{\varphi}_{r} &= \inf_{\widetilde{u} \in \Pi_{t_{0},x_{0}+a f (x_{0}, \widetilde{u}), b-a, u}} \int_{t_{0}}^{t_{0}+b-a} L(t_{0},x_{0},x(s) , \varphi_{r f(x_{0},\widetilde{u})}\omega) ds \\
 &=\inf_{\widetilde{u} \in \Pi_{t_{0},x_{0}+a f (x_{0}, \widetilde{u}), b-a, \widetilde{u}}} \int_{t_{0}}^{t_{0}+b-a} L(t_{0},x_{0},x(s) + rf(x_{0},\widetilde{u}) , \omega) ds \\
   &= \inf_{u \in \Pi_{t_{0},x_{0}+(a+r) f (x_{0}, \widetilde{u}), b-a, \widetilde{u}}} \int_{t_{0}}^{t_{0}+b-a} L(t_{0},x_{0},x(s), \omega)ds \\
   &=F_{a+r,b+r}
 \end{align} 
 
\noindent  and so (C1) is satisfied. Sub-additivity (C2) is a consequence of the fact that any path from $x_{0}$ to $ x_{0}+a f (x_{0}, \widetilde{u})$ in time $[0,a]$, and then a path from $ x_{0}+a f (x_{0}, \widetilde{u})$ to $x_{0}+bf (x_{0}, \widetilde{u})$ in time $[a,b]$, can be joined to form a path from $x_{0}$ to $bf (x_{0}, \widetilde{u})$ in time $[0,b]$. The condition (C3) follows from the assumption that $L_{*}(0)$ is finite.\\

We now want to show that it satisfies the conditions  of Proposition \ref{Sub-additive ergodic theorem}.  The upper bound $K(a,b)$ can be taken as $(b-a) L^{*}(K(\widetilde{u}))$ where the $K(\widetilde{u})$ is as in equation \eqref{effectiveboundednessofLagrange}. Hence we only need the following  continuity  lemma.

\begin{lemma} \label{contofF_{a,b}}
 For positive real numbers $b_{1},b_{2},a_{1},a_{2}$ such that $|b_{1}-b_{2}|$, $|a_{1}-a_{2}|$ are sufficiently small there exists a constant $C=C(k)$ such that if $ |b_{1}-a_{1}| \wedge |b_{2}-a_{2}|  > k $ 
 
 \begin{equation}
 |F_{a_{1},b_{1}} - F_{a_{2},b_{2}}  \circ \varphi_{y}  | \leq C \big [ |a_{1}-a_{2}| + |b_{1}-b_{2} | + |y| \big ]
 \end{equation}

\end{lemma}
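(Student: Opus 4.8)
The plan is to reduce the statement to Lemma \ref{contlemma}, after two bookkeeping moves: peeling off the lower endpoint $a$ with the cocycle identity (C1), and then invoking the scaling identity \eqref{weusethiswhenwe talkaboutF} for a conveniently small $\epsilon$.

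Write $\ell_i := b_i - a_i$. Since, by (C1) with the transformations $\widetilde{\varphi}_r = \varphi_{r f(x_0, \widetilde{u})}$, one has $F_{a_i, b_i} = F_{0, \ell_i} \circ \widetilde{\varphi}_{a_i}$, composing with $\varphi_y$ and using $\varphi_p \circ \varphi_q = \varphi_{p+q}$ together with the measure-preservation of the $\varphi$'s gives
\[
\bigl| F_{a_1, b_1} - F_{a_2, b_2} \circ \varphi_y \bigr| = \bigl| F_{0, \ell_1} - F_{0, \ell_2} \circ \varphi_{z} \bigr| \quad \text{a.s.}, \qquad z := (a_2 - a_1)\, f(x_0, \widetilde{u}) + y,
\]
with $|z| \le f^{*}(|\widetilde{u}|)\,|a_1 - a_2| + |y|$. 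This is the crucial step: it converts the change in $a$ --- which would otherwise move the frozen base point of the underlying control problem --- into nothing more than an extra translation of the environment, leaving two processes with a common base point $x_0$ and the same $\widetilde{u}$.

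By \eqref{weusethiswhenwe talkaboutF} we have $F_{0, \ell} = \tfrac{1}{\epsilon}\, L_{\epsilon \ell, \epsilon}(\epsilon t_0, x_0, \widetilde{u}, \omega)$ for every $\epsilon > 0$. For the fixed data I would then choose $\epsilon$ small enough that $\epsilon \ell_i\, |f(x_0, \widetilde{u})| \le \eta(|\widetilde{u}|)$ for $i = 1, 2$; this is precisely the regime in which the effective-compactness constant $K = K(\widetilde{u})$ of Lemma \ref{Boundedness lemma2} applies to $L_{\epsilon \ell_i, \epsilon}$, and it costs nothing since $F$ does not depend on $\epsilon$. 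Lemma \ref{contlemma} then applies with $t_0 \mapsto \epsilon t_0$, $x_0 \mapsto x_0$, $\tau_i \mapsto \epsilon \ell_i$, $\widetilde{u}_1 = \widetilde{u}_2 = \widetilde{u}$ and shift $z$; its hypotheses ($|\widetilde{\beta}^{*} - 1| \le \eta(K)$, and $\tau > \delta^{-1}(\cdots)$) hold once $|a_1 - a_2|$, $|b_1 - b_2|$ and $|y|$ are small relative to $\delta k$, using that $\ell_1 \wedge \ell_2 > k$.

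It remains to check that each term of the resulting bound, after multiplication by $\tfrac{1}{\epsilon}$, is $\le C\,(|\ell_1 - \ell_2| + |z|)$ with $C$ independent of $\epsilon$ and of the (possibly large) lengths $\ell_i$; this is where the $\epsilon$-cancellations matter. One has $\beta_1 = \epsilon \ell_1 / \bigl(\epsilon \ell_2 - \delta^{-1}(\epsilon |\ell_1 - \ell_2|\,|f(x_0,\widetilde{u})| + \epsilon |z|)\bigr) = \ell_1 / \bigl(\ell_2 - \delta^{-1}(|\ell_1 - \ell_2|\,|f(x_0,\widetilde{u})| + |z|)\bigr)$, and since $\ell_2 > k$ with the perturbation small the denominator is $\ge \ell_2/2$; hence $(\ell_1 \wedge \ell_2)|\beta_1 - 1| \le 2\bigl[(1 + \delta^{-1} f^{*}(|\widetilde{u}|))|\ell_1 - \ell_2| + \delta^{-1}|z|\bigr]$, and similarly for the $|1/\beta_{*} - 1|$ term, while the last term of Lemma \ref{contlemma} reduces to $\tfrac{L^{*}(M)}{\delta}|z|$. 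Combining with $|\ell_1 - \ell_2| \le |a_1 - a_2| + |b_1 - b_2|$ and the bound on $|z|$ yields the claim, the dependence $C = C(k)$ entering only through the lower bound on the $\beta$-denominators. I expect the main obstacle to be conceptual rather than computational --- recognizing that one must first eliminate the $a$-variable with (C1) before Lemma \ref{contlemma} is applicable, and then tracking the $\epsilon$-cancellations carefully enough to see that nothing degenerates as $\epsilon \to 0$ or as $\ell_i \to \infty$.
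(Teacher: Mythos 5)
Your proof is correct, and for the $b$- and $y$-dependence it coincides with the paper's argument: both invoke Lemma \ref{contlemma} with $\widetilde{u}_{1}=\widetilde{u}_{2}=\widetilde{u}$ through the scaling identity \eqref{weusethiswhenwe talkaboutF}, and your bookkeeping of the $\epsilon$-cancellations in the $\beta_i$ and of the lower bound $\ell_2>k$ on the denominators matches where the paper's $C(k)$ comes from. Where you genuinely diverge is the treatment of the first parameter $a$. The paper handles it by a second control-construction in the spirit of Lemma \ref{contlemma} (steer to the shifted starting point $x_0+af(x_0,\widetilde{u})$ at speed $\delta$, then retrace the trajectory at a compensating faster speed), and only sketches this. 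You instead use the already-verified cocycle property (C1), $F_{a,b}=F_{0,b-a}\circ\widetilde{\varphi}_{a}$, to convert the $a$-shift into an additional environment translation $z=(a_2-a_1)f(x_0,\widetilde{u})+y$ and a change of interval length $|\ell_1-\ell_2|\le|a_1-a_2|+|b_1-b_2|$, both of which Lemma \ref{contlemma} already controls. This buys you a single application of Lemma \ref{contlemma} instead of two separate control constructions, and it makes the $a$-continuity fully rigorous rather than sketched; the only caveat is that your displayed ``equality a.s.'' really holds after the measure-preserving substitution $\omega\mapsto\varphi_{a_1 f(x_0,\widetilde{u})}\omega$, which is harmless here precisely because the target bound is deterministic and almost sure — a point worth making explicit.
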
 

\begin{proof} 
The proof  occurs through using Lemma \ref{contlemma} in the special case where $\widetilde{u}_{1}=\widetilde{u}_{2}=\widetilde{u}$  and the equation \eqref{weusethiswhenwe talkaboutF}. Doing so we observe that 
\begin{equation} \label{Fcont}
\begin{split} 
|F_{0,b_{1}} \circ \varphi_{y} - F_{0,b_{2}}| &  =  \frac{1}{\epsilon} \big |  L_{\epsilon b_{1},
\epsilon}(\epsilon t_{0}, x_{0}, \epsilon b_{1},  \widetilde{u}, \omega ) -L_{\epsilon b_{2},
\epsilon}(\epsilon t_{0}, x_{0}, \epsilon b_{2},  \widetilde{u}, \omega ) \big |   \\  & \leq  
 b L^{*}(K) |1 / \beta_{*} -1 |  + b \|L\|_{Lip}^{K+\eta(K)} \|H\|^{K+\eta(K)}_{Lip} f^{*}(K)|(\beta^{*} - 1) | +  \frac{L^{*}(M)}{\delta}   |y|
 \end{split}
\end{equation}
provided that $|b_{1}-b_{2}|$ and $y$ are sufficiently small so that the hypothesis of Lemma \ref{contlemma} are satisfied. $\beta^{*}$ and $\beta_{*}$  are as in Lemma \ref{contlemma} and $b=b_{1} \wedge b_{2}$.  Note $\beta^{*}$ and $\beta_{*}$ diverges/becomes indeterminant for small values of $b_{1},b_{2}$ and hence the Lipschitz constant $C(k)$ depends on the parameter $k$.  \\

This proves Lipschitz continuity of the object $F_{a,b} \circ \varphi_{y}$ in the $y$ variable and second parameter. The continuity of the first parameter is proved similar to the analysis in Lemma \ref{contlemma}, in that we adjust a control so that it heads towards the point $x_{0}+af(x_{0},\widetilde{u})$ at a speed $\delta $ in the beginning and from there follow the same trajectory at a faster speed  to compensate. \end{proof}

We  may now utilize the Proposition \ref{Sub-additive ergodic theorem} to deduce on a set $\Omega_{0} \subseteq \Omega $ with $P(\Omega_{0})=1$ there exists an $\widetilde{L}_{\tau} (t_{0},x_{0},\widetilde{u},\omega)$ such that

\begin{equation} \label{linear}
\lim_{\epsilon \rightarrow 0} \epsilon F_{0,\tau /\epsilon}=  \lim_{\epsilon \rightarrow 0}  L_{\tau, \epsilon}(t_{0},x_{0}, \widetilde{u}, \omega) 
= \widetilde{L}_{\tau} (t_{0},x_{0},\widetilde{u},\omega)= \tau \widetilde{L}_{1}(t_{0},x_{0},\widetilde{u},\omega)
\end{equation}

\noindent for all $\omega \in \Omega_{0}$. The linear relationship of $\widetilde{L}$ with respect its argument $\tau$ will become important later. 
%

Take any $y \in \mathbb{R}^{d}$ and observe that, from \eqref{Fcont},

\begin{equation} 
\lim_{\epsilon \rightarrow 0} \epsilon  \big | F_{0,\tau /\epsilon} \circ \varphi_{y}  - F_{0,\tau /\epsilon} \big | \leq \lim_{\epsilon \rightarrow 0}  \epsilon C |y| =0.
\end{equation}

\noindent Hence it follows that $\widetilde{L}_{\tau}(t_{0},x_{0},\widetilde{u},\omega)$ is $\varphi_{y}$ invariant. Therefore by ergodicity,  $\widetilde{L}_{\tau}(t_{0},x_{0},\widetilde{u},\omega)$ is almost surely constant in $\Omega$.\\

We then extend this to every rational number and take the countable intersection to conclude the following lemma.

\begin{lemma} \label{coneqn}

      There is a function $\widetilde{L}_{\tau}$ such that, for all $(t,x,\widetilde{u}) \in  [0,T] \cap \mathbb{Q} \times \mathbb{Q}^{d} \times \mathbb{Q}^{d} $, we have an event $\Omega^{'} \subset \Omega$ with $P(\Omega^{'})=1$ such that
   \begin{equation}  \label{subadd}
  \lim_{\epsilon \rightarrow 0} L_{\tau, \epsilon}(t,x, \widetilde{u}, \omega) = \widetilde{L}_{\tau}(t,x, \widetilde{u})  \text{ on } \Omega^{'}.
   \end{equation}
   \end{lemma}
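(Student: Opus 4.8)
\textbf{Proof plan for Lemma \ref{coneqn}.}

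The plan is to promote the almost-sure convergence and almost-sure constancy obtained in the previous paragraph, which holds for each \emph{fixed} triple $(t,x,\widetilde{u})$, to a single event of full probability that works \emph{simultaneously} for all rational triples. First I would fix notation: for each $\epsilon>0$ and each triple $(t,x,\widetilde{u})\in[0,T]\times\mathbb{R}^d\times\mathbb{R}^d$ for which the admissibility condition $|\tau f(x,\widetilde{u})|\le\eta(|\widetilde{u}|)$ of Lemma \ref{Boundedness lemma2} is met, the quantity $L_{\tau,\epsilon}(t,x,\widetilde{u},\omega)$ is well defined. By the discussion preceding the lemma — applying Proposition \ref{Sub-additive ergodic theorem} to the subadditive process $F_{a,b}$ via the identification \eqref{weusethiswhenwe talkaboutF}, using the uniform bound $|F_{a,b}|\le(b-a)L^*(K(\widetilde{u}))$ and the continuity supplied by Lemma \ref{contofF_{a,b}} — for each such fixed triple there is a full-measure event $\Omega_{t,x,\widetilde{u}}$ on which $\lim_{\epsilon\to0}L_{\tau,\epsilon}(t,x,\widetilde{u},\omega)$ exists; moreover, by the $\varphi_y$-invariance argument of \eqref{Fcont} together with ergodicity of the group $\{\varphi_x\}$, this limit is a.s.\ equal to a non-random constant, which I name $\widetilde{L}_\tau(t,x,\widetilde{u})$.

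The second step is the countable-intersection argument. The set $S:=\bigl([0,T]\cap\mathbb{Q}\bigr)\times\mathbb{Q}^d\times\mathbb{Q}^d$ is countable, so $\Omega':=\bigcap_{(t,x,\widetilde{u})\in S}\Omega_{t,x,\widetilde{u}}$ is still of full measure, $P(\Omega')=1$. On $\Omega'$, for every rational triple in $S$ the limit \eqref{subadd} holds and equals the deterministic value $\widetilde{L}_\tau(t,x,\widetilde{u})$. This is exactly the statement of the lemma. One small bookkeeping point to address: the admissibility restriction $|\tau f(x,\widetilde{u})|\le\eta(|\widetilde{u}|)$ means $F_{a,b}$ (hence the convergence \eqref{subadd}) is only defined for $(t,x,\widetilde{u})$ in a certain region; since $\widetilde{L}_\tau$ is linear in $\tau$ by \eqref{linear}, one reduces to $\widetilde{L}_1$ and uses that for any $\widetilde{u}$ the condition holds for all sufficiently small $\tau$, so the statement should be read (as the proof of the main theorem later uses it) for rational triples in the admissible regime, or else one fixes $\tau$ small enough relative to the range of $\widetilde{u}$ under consideration.

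I do not expect a genuine obstacle here — the content was already done in the preceding paragraphs and this lemma is essentially the act of recording it after a countable intersection. The only place demanding any care is making sure the full-measure event does not depend on $\epsilon$: this is automatic because Proposition \ref{Sub-additive ergodic theorem} gives almost-sure existence of $\lim_{b\to\infty}b^{-1}F_{0,b}$ on a single event, and the rescaling \eqref{scaling}–\eqref{weusethiswhenwe talkaboutF} turns $b\to\infty$ into $\epsilon\to0$ without introducing any new randomness. Thus the proof is: invoke the per-triple result established above, take the countable intersection over $S$, and conclude.
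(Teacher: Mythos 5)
Your proposal is correct and is essentially identical to the paper's argument: the paper applies the subadditive ergodic theorem to $F_{a,b}$ for each fixed triple, deduces the limit is $\varphi_y$-invariant and hence a.s.\ constant by ergodicity, and then (in its own words) ``extends this to every rational number and takes the countable intersection.'' Your extra remark about the admissibility restriction and the linearity in $\tau$ is a reasonable piece of bookkeeping that the paper glosses over.
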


   \bigskip
   
  \noindent   For a control $\widetilde{u} \in \mathcal{U}_{\tau}$ we  define the cost function

   \begin{equation} \label{sum}
 J_{\tau}(t,x,\widetilde{u}(s)) = \sum_{i=1}^{N} \widetilde{L}_{\tau}(t_{i},x_{i}, \widetilde{u}_{i})
 \end{equation}

\noindent and value function

 \begin{equation}
 \widetilde{V}_{\tau}(t,x) = \inf_{\widetilde{u} \in \mathcal{U}_{\tau}} J_{\tau}(t,x,\widetilde{u}(s)) .
 \end{equation}
Recalling the linear relationship of $\widetilde{L}_{\tau}$ with respect to $\tau$, we are now a position where we can define the effective Lagrangian for any $\tau >0$.

\begin{equation} \label{deffof1L}
L(t_{0},x_{0}, \widetilde{u}): = \frac{1}{\tau} \widetilde{L}_{\tau} (t_{0},x_{0},\widetilde{u}) =  \widetilde{L}_{\tau} (t_{0},x_{0},\widetilde{u})
\end{equation}

   \begin{lemma} \label{effcontlemma}
   For each $R >  0$, $\widetilde{L}(t,x,\widetilde{u})$ is uniformly continuous on $ [0,T] \cap \mathbb{Q} \times \mathbb{Q}^{d} \times \mathbb{Q}^{d} \cap U^{R}$ and hence can be extended to all real numbers.  
   \end{lemma}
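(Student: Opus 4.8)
The plan is to deduce the uniform continuity of $\widetilde{L}$ from the modulus of continuity for $L_{\tau,\epsilon}$ established in Lemma~\ref{contlemma}, by passing to the limit $\epsilon\to 0$. Concretely, fix $R>0$ and work on the countable set $[0,T]\cap\mathbb{Q}\times\mathbb{Q}^d\times(\mathbb{Q}^d\cap U^R)$ where Lemma~\ref{coneqn} guarantees, on a full-probability event $\Omega'$, the convergence $L_{\tau,\epsilon}(t,x,\widetilde{u},\omega)\to\widetilde{L}_\tau(t,x,\widetilde{u})$. The key point is that, by \eqref{deffof1L}, we may take $\tau=1$ throughout, so there is a single relevant scale $\tau_1=\tau_2=1$ in Lemma~\ref{contlemma}, and the only quantities that vary are the base point $(t_0,x_0)$, the control $\widetilde{u}$, and the ergodic shift $y$. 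First I would specialise Lemma~\ref{contlemma} to $\tau_1=\tau_2=1$ and compare $\widetilde{L}(t,x_1,\widetilde{u}_1)$ with $\widetilde{L}(t,x_2,\widetilde{u}_2)$; since both limits are a.s.\ constant in $\omega$ (as shown just before Lemma~\ref{coneqn}), I may choose $\omega$ in the common full-measure event and simply take $\epsilon\to 0$ on both sides of the inequality in Lemma~\ref{contlemma}.

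Carrying this out, with $\tau_1=\tau_2=1$ one computes that $\beta_1,\beta_2$ (and hence $\widetilde\beta^*,\widetilde\beta_*$) tend to $1$ as $|x_1-x_2|+|\widetilde{u}_1-\widetilde{u}_2|+\epsilon|y|\to 0$: indeed $\beta_i = \tau_i/(\tau_j - \delta^{-1}(|f(x_0,\widetilde u_i)-f(x_0,\widetilde u_j)|+\epsilon y))$, and by Assumption~\ref{Lip-f} the numerator inside $\delta^{-1}(\cdot)$ is bounded by $\|f\|_{Lip}^{K}|\widetilde{u}_1-\widetilde{u}_2|+\epsilon|y|$. So taking $\epsilon\to0$ first, the $\epsilon y$-terms vanish, $|1/\beta_*-1|$ and $|\beta^*-1|$ become controlled by a constant (depending on $K$, $\delta$, $\|f\|_{Lip}^K$) times $|\widetilde u_1-\widetilde u_2|$, and Lemma~\ref{contlemma} yields, for $t$ fixed and rational points,
\begin{equation}
|\widetilde{L}(t,x_1,\widetilde{u}_1)-\widetilde{L}(t,x_2,\widetilde{u}_2)| \le \omega_R\big(|x_1-x_2|+|\widetilde{u}_1-\widetilde{u}_2|\big)
\end{equation}
for an explicit modulus $\omega_R$ built from $L^*(K)$, $L^*(M)$, $\|L\|_{Lip}^{K+\eta(K)}$, $\|H\|_{Lip}^{K+\eta(K)}$, $f^*(K)$, $\delta$ and $\|f\|_{Lip}^K$, where here one must also note that $K=K(\widetilde u_1)\wedge K(\widetilde u_2)$ stays bounded uniformly for $\widetilde u\in U^R$ (this uses that $K(\widetilde u)$ in Lemma~\ref{Boundedness lemma2} can be taken locally bounded, ultimately from the coercivity Assumption~\ref{Growthassumption}). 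For the continuity in the time variable $t$, I would invoke the $\|L\|_{Lip}$-Lipschitz bound in $t$ from Assumption~\ref{Lip-L}: since $L_{\tau,\epsilon}$ is an infimum of integrals of $L$ over an interval of length $\tau$ with the time slot frozen at $t_0$, one gets $|L_{\tau,\epsilon}(t_1,\cdot)-L_{\tau,\epsilon}(t_2,\cdot)|\le \tau\|L\|_{Lip}|t_1-t_2|$, which passes to the limit to give $|\widetilde{L}(t_1,x,\widetilde u)-\widetilde{L}(t_2,x,\widetilde u)|\le\|L\|_{Lip}|t_1-t_2|$ (after dividing by $\tau$). Combining the spatial/control modulus with the time-Lipschitz bound gives joint uniform continuity on the rational grid intersected with $U^R$.

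Finally, a uniformly continuous function on a dense subset of a metric space extends uniquely to a uniformly continuous function on the closure; since $[0,T]\cap\mathbb{Q}\times\mathbb{Q}^d\times(\mathbb{Q}^d\cap U^R)$ is dense in $[0,T]\times\mathbb{R}^d\times \overline{U^R}$, and $\widetilde{L}$ restricted there satisfies a modulus of continuity independent of the grid, the extension exists; letting $R\to\infty$ along integers and noting the extensions agree on overlaps gives a globally defined continuous $\widetilde{L}$ on $[0,T]\times\mathbb{R}^d\times\mathbb{R}^d$. The main obstacle I anticipate is bookkeeping the uniformity of the constants: one must verify that $K(\widetilde u)$ from Lemma~\ref{Boundedness lemma2}, and the various quantities $\eta(K)$, $L^*(K)$, etc., remain bounded as $\widetilde u$ ranges over $U^R$, and that the smallness hypotheses of Lemma~\ref{contlemma} (namely $|\widetilde\beta^*-1|\le\eta(K)$ and $\tau>\delta^{-1}(\cdots)+\epsilon y$) are genuinely satisfiable uniformly once $|x_1-x_2|+|\widetilde u_1-\widetilde u_2|$ is small enough and $\epsilon$ small enough — this is what forces us to send $\epsilon\to 0$ before extracting the modulus, and is the only slightly delicate ordering-of-limits point in the argument.
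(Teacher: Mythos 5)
Your treatment of the control variable and of time is essentially the paper's: with $\tau_1=\tau_2$ Lemma~\ref{contlemma} gives $\beta_1=\beta_2=\big(1-\delta^{-1}|f(x_0,\widetilde u_1)-f(x_0,\widetilde u_2)|\big)^{-1}$, which yields a uniform modulus in $\widetilde u$ on $U^R$, and the $\|L\|_{Lip}|t_1-t_2|$ bound passes to the limit; the density extension at the end is also fine. But there is a genuine gap in how you obtain continuity in the macroscopic base point $x$. Lemma~\ref{contlemma} compares $L_{\tau_1,\epsilon}(t_0,x_0,u_1,\omega)$ with $L_{\tau_2,\epsilon}(t_0,x_0,u_2,\omega)\circ\varphi_y$ at one and the same $x_0$: neither the quantities $\beta_i$ nor the final bound contain any $|x_1-x_2|$ term, so the asserted modulus $\omega_R(|x_1-x_2|+|\widetilde u_1-\widetilde u_2|)$ does not follow from it. Changing the base point from $x_1$ to $x_2$ alters three things at once: the slow (second) argument of $L$, the frozen dynamics $f(x_1,\cdot)$ versus $f(x_2,\cdot)$, and the starting point of the fast variable $x(\cdot)/\epsilon$, which amounts to composing with a translation of the environment whose magnitude does \emph{not} become small.

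The paper closes exactly this gap by a different mechanism: it writes
\begin{equation}
|\widetilde L(t_1,x_1,\widetilde u)-\widetilde L(t_2,x_2,\widetilde u)|
\le \lim_{\epsilon\to 0}\big|L_{1,\epsilon}(t_2,x_2,\widetilde u,\omega)-L_{1,\epsilon}(t_2,x_2,\widetilde u,\varphi_{x_1-x_2}\omega)\big|
+\|L\|_{Lip}|t_1-t_2|+m_L^{|\widetilde u|}(|x_1-x_2|),
\end{equation}
so the slow spatial argument is absorbed by the modulus $m_L$ from Assumption~\ref{Lip-L}, and the environment shift is killed not by smallness but by the fact that $\widetilde L_\tau$ is $\varphi_y$-invariant and hence almost surely constant by ergodicity, so the first term vanishes. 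You do invoke the a.s.\ constancy, but only to select a common full-measure event of $\omega$'s, not to neutralize the translation of the environment induced by moving the starting point; without that step (and without controlling the change in the frozen dynamics via Assumptions~\ref{Lip-f} and~\ref{implicitfunctionassumption}) the spatial part of the modulus is unproven. Note also that this is precisely the place where ergodicity enters the continuity argument, so it cannot be bypassed by a purely deterministic estimate of the type you extract from Lemma~\ref{contlemma}.
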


   \begin{proof}

   In this proof we always take $t_{i},x_{i},u_{i}  \in [0,T] \cap \mathbb{Q}\times \mathbb{Q}^{d} \times \mathbb{Q}^{d} \cap U^{R}$.  Employing Lemma \ref{contlemma} and the $\beta_{i}$ therein, for any fixed $\tau$ we will have 
   
   \begin{equation}
   \beta:=\beta_{1}=\beta_{2} = \frac{1}{1 - \delta^{-1} | f(x_{0}, u_{2})- f(x_{0}, u_{2}) |}.
   \end{equation}
   
\noindent  This quantity is also $\tau$ invariant, and thus, by Lemma \ref{contlemma} we immediately obtain a uniform modulus of continuity in the control variable on $U^{R}$ and in $\tau$. \\
   
The temporal and spatial modulus of continuity are inherited from the original Lagrangian, after taking $\epsilon \rightarrow 0$. Specifically, we have
\begin{multline} \label{contofL_alpha,nspace}
 | \widetilde{L}  (t_{1},x_{1},\widetilde{u})  - \widetilde{L} (t_{2},x_{2},\widetilde{u})  | =  | \widetilde{L}_{1} (t_{1},x_{1},\widetilde{u})  - \widetilde{L}_{1} (t_{2},x_{2},\widetilde{u})  | \\ \leq \lim_{\epsilon \rightarrow 0} | L_{1,\epsilon} (t_{2},x_{2},\widetilde{u}, \omega)  - L_{1,\epsilon} (t_{2},x_{2},\widetilde{u}, \varphi_{x_{1}-x_{2}} \omega)  | +\|L\|_{Lip}(|t_{1}-t_{2}|) + m_{L}^{|\widetilde{u}|} (|x_{1}-x_{2}|)   
\end{multline}

\noindent However as the $\widetilde{L}_{\tau}$ is almost surely constant on $\Omega$, then

\begin{equation}
\lim_{\epsilon \rightarrow 0} | L_{1} (t_{2},x_{2},\widetilde{u}, \omega)  - L_{1} (t_{2},x_{2},\widetilde{u}, \varphi_{x_{1}-x_{2}} \omega)  | =0 \text{ a.s.} 
\end{equation}

Therefore 

\begin{equation}
 | \widetilde{L}  (t_{1},x_{1},\widetilde{u})  - \widetilde{L} (t_{2},x_{2},\widetilde{u})  |  \leq \|L\|_{Lip}(|t_{1}-t_{2}|) + m_{L}^{|\widetilde{u}|} (|x_{1}-x_{2}|).
\end{equation}

\end{proof}

   Having extended the effective Lagrangian to all of $\mathbb{R}^{d}$ we can now define the homogenized control problem. For some control $u \in \mathcal{U}$, let $x(\cdot)$ be given by the state dynamics as in \eqref{statedynamics}. We define the cost functional as
   
   \begin{equation}
   \widetilde{J}(t,x,u) := \int_{t}^{T} \widetilde{L} (s,x(s),u(s)) ds
   \end{equation}
   
 \noindent  and the value function
   
   \begin{equation}
   \widetilde{V}(t,x):= \inf_{u \in \mathcal{U}} \widetilde{J}(t,x,u).
   \end{equation}

\subsection{Coercivity of the effective Lagrangian}

In this section we prove that the effective Lagrangian satisfies the same coercive bounds as the original Lagrangian. 

\begin{lemma} \label{Coercivity Lemma}
For all $t_{0},x_{0},u \in \mathbb{R}^{d}$ and $\epsilon, \tau >0$ we have that

\begin{equation}
L_{\tau, \epsilon} (t_{0},x_{0},u) \geq \tau L_{*}(u)
\end{equation}

\end{lemma}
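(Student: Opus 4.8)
The plan is to bound the integrand in the definition of $L_{\tau,\epsilon}$ from below pointwise by $L_*$ of the control value, and then use the second statement of Assumption~\ref{Growthassumption} (the ``generalized convexity'' inequality \eqref{part2GA}) together with the membership constraint $\int_{t_0}^{t_0+\tau} f(x_0,u(s))\,ds = \tau f(x_0,\widetilde u)$ to pass to a lower bound in terms of $L_*(\widetilde u)$. First I would fix an arbitrary admissible control $u \in \Pi_{t_0,x_0,\tau,\widetilde u}$. By the defining property of $L_*$ in \eqref{part1GA}, namely $L_*(u(s)) \le L_{\inf}(|u(s)|) \le L(t_0,x_0,x(s)/\epsilon,u(s),\omega)$ for a.e.\ $s$, we get
\begin{equation}
\int_{t_0}^{t_0+\tau} L\left(t_0,x_0,\frac{x(s)}{\epsilon},u(s),\omega\right)ds \ge \int_{t_0}^{t_0+\tau} L_*(u(s))\,ds.
\end{equation}

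Next I would invoke \eqref{part2GA} with the particular choice $u_1 = u(s)$ and $u_2 = \widetilde u$ at each $s$, and with $x = x_0$: this gives
\begin{equation}
L_*(u(s)) - L_*(\widetilde u) \ge \Theta(\widetilde u)\cdot\big[f(x_0,u(s)) - f(x_0,\widetilde u)\big].
\end{equation}
Integrating over $s\in[t_0,t_0+\tau]$ and using the constraint $\int_{t_0}^{t_0+\tau} f(x_0,u(s))\,ds = \tau f(x_0,\widetilde u)$, the right-hand side integrates to $\Theta(\widetilde u)\cdot[\tau f(x_0,\widetilde u) - \tau f(x_0,\widetilde u)] = 0$, so that $\int_{t_0}^{t_0+\tau} L_*(u(s))\,ds \ge \tau L_*(\widetilde u)$. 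Combining with the previous display yields $\int_{t_0}^{t_0+\tau} L(t_0,x_0,x(s)/\epsilon,u(s),\omega)\,ds \ge \tau L_*(\widetilde u)$, and since $u$ was an arbitrary element of $\Pi_{t_0,x_0,\tau,\widetilde u}$, taking the infimum gives $L_{\tau,\epsilon}(t_0,x_0,\widetilde u) \ge \tau L_*(\widetilde u)$, which is the claim (with $\widetilde u$ playing the role of the $u$ in the statement).

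I do not expect a serious obstacle here; the only points requiring a little care are measurability (so that $s\mapsto L_*(u(s))$ and $s\mapsto f(x_0,u(s))$ are integrable, which follows from $u\in\mathcal U = L^\infty$, the finiteness bounds of Assumption~\ref{finiteproblem}, and the fact that $L_* \le L_{\inf}$ is bounded below while bounded above along the trajectory) and making sure the constraint defining $\Pi_{t_0,x_0,\tau,\widetilde u}$ is applied exactly so that the $\Theta(\widetilde u)$ term vanishes after integration. Note the argument is entirely independent of $\epsilon$ and of $\omega$, as the statement requires, because $L_*$ is a deterministic lower bound and the cancellation is purely a consequence of the averaged state-dynamics constraint.
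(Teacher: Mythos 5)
Your proof is correct and rests on the same two ingredients as the paper's: the pointwise bound $L \ge L_{\inf}(|u|) \ge L_*(u)$ and the generalized convexity inequality \eqref{part2GA} combined with the averaging constraint defining $\Pi_{t_0,x_0,\tau,\widetilde u}$. The only difference is one of execution: the paper first proves the discrete two-point inequality $\tau_1 L_*(u_1)+\tau_2 L_*(u_2)\ge L_*(u)$ for $\tau_1 f(x_0,u_1)+\tau_2 f(x_0,u_2)=f(x_0,u)$ and then ``iterates and passes to limits,'' whereas you integrate the subgradient-type inequality directly against the constraint so that the $\Theta(\widetilde u)$ term cancels in one step --- a cleaner way of carrying out the same Jensen-type argument (and one that incidentally avoids the paper's typo where $\Theta(u_2)$ should read $\Theta(u)$).
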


\begin{proof}

Without loss of generality, take $\tau=1$ and suppose that  $\tau_{1},\tau_{2} \geq 0$  such that $\tau_{1}+\tau_{2}=1$.  \\\\ Suppose that $u_{1}$ and $u_{2}$ are such that

\begin{equation}
\tau_{1} f(x_{0},u_{1}) + \tau_{2} f(x_{0},u_{2})  = f(x_{0},u)
\end{equation}

\noindent then the essence of the proof is showing that this implies that

\begin{equation}
\tau_{1} L_{*}(u_{1}) + \tau_{2} L_{*}(u_{2})  \geq L_{*} (u).
\end{equation}

The argument is as follows. We have from Assumption $8$ that

\begin{equation}
\tau_{1} L_{*}(u_{1}) - \tau_{1} L_{*}(u) \geq  \tau_{1} \Theta (u_{2}) \cdot \big [ f(x,u_{1})- f(x,u) \big ] 
\end{equation}

\noindent and

\begin{equation}
\tau_{2} L_{*}(u_{2}) - \tau_{2} L_{*}(u) \geq  \tau_{2} \Theta (u_{2})  \cdot \big [ f(x,u_{2})- f(x,u) \big ] .
\end{equation}

\noindent Adding these gives the inequality

\begin{equation}
\tau_{1} L_{*}(u_{1}) + \tau_{2} L_{*}(u_{2})  \geq L_{*} (u).
\end{equation}

\noindent Iterating and passing to limits we can conclude for any $u \in \Pi_{t_{0},x_{0},\tau,\widetilde{u}}$,

\begin{equation}
\int_{t_{0}}^{t_{0}+\tau} L \left( t_{0},x_{0}, \frac{x(s)}{\epsilon}, u(s),\omega \right) ds  \geq \int_{t_{0}}^{t_{0}+\tau} L_{*} (u(s)) ds \geq L_{*}(\widetilde{u}).
\end{equation}
\end{proof}

By taking $\epsilon \rightarrow 0$ we can immediately conclude from this Lemma that $\widetilde{L}_{\tau}(t,x,u) \geq \tau L_{*}(u)$ for every $\tau >0$.
This lemma gives us precisely the same estimates on $\widetilde{L} $, $L_{\tau,\epsilon}$ as $L$, and so we can prove the effective boundedness for the homogenized control problems.

\begin{lemma} \label{effectiveboundednessofvaluelemma2}
There exists an $K >0$ such that for all $t_{0} \in [0,T] ,x_{0} \in \mathbb{R}^{d}$.

\begin{equation} \label{effectiveboundednessofvalue2}
\widetilde{V}^{K}(t_{0},x_{0})=\widetilde{V}(t_{0},x_{0}),
\end{equation}

and

\begin{equation}
\widetilde{V}^{K}_{\tau}(t_{0},x_{0})=\widetilde{V}_{\tau}(t_{0},x_{0}).
\end{equation}

\end{lemma}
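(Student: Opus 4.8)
\textbf{Proof proposal for Lemma \ref{effectiveboundednessofvaluelemma2}.}

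The plan is to mimic the proof of Lemma \ref{effectiveboundednessofvaluelemma}, replacing the original Lagrangian $L$ and its lower bound $L_{\inf}$ by the effective Lagrangian $\widetilde{L}$ and the coercive lower bound just obtained, namely $\widetilde{L}(t,x,u) \ge L_{*}(u)$ (equivalently $\widetilde{L}_{\tau}(t,x,u) \ge \tau L_{*}(u)$ from the remark following Lemma \ref{Coercivity Lemma}). The point is that the homogenized control problem $(\widetilde{L}, f)$ satisfies exactly the same structural hypotheses — Assumptions \ref{finiteproblem}, \ref{Lip-f}, \ref{Lip-L} (with the spatial and temporal moduli inherited via Lemma \ref{effcontlemma}), \ref{zeroisinrange}, \ref{Growthassumption}, \ref{implicitfunctionassumption}, \ref{convexassumption} — that went into Lemma \ref{effectiveboundednessofvaluelemma} for the original problem. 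In particular the key ingredients $M$, $\delta$, the flexibility \eqref{hereisdelta}, and the growth function $\gamma(u) = L_{*}(u)/f^{*}(|u|+\lambda) \to \infty$ are literally unchanged, since $f$ is unchanged and $\widetilde{L}$ obeys the same coercivity. Hence the quantitative argument of Lemma \ref{effectiveboundednessofvaluelemma} goes through verbatim and produces a (possibly different but uniform) constant $K$.

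First I would spell out the continuous statement \eqref{effectiveboundednessofvalue2}. Fix $t_0, x_0$ and take any admissible control $u$ for $\widetilde{J}$. Using effective boundedness of $\widetilde{V}$ we already know $\widetilde{V}(t_0,x_0) \le \psi(x_0) + \text{const}$ (take the control that sends the state to a zero of $f(x,\cdot)$ using \eqref{hereisdelta} and then stays, which costs at most $(L^*(M)/\delta)\,\mathrm{diam} + T\,\widetilde{L}^{*}(M)$, a bound uniform in $t_0,x_0$ because $\psi$ is bounded by Assumption \ref{Terminalcostcont} and $\widetilde{L}^{*}(M) := \sup \widetilde{L}$ on $U^M$ is finite by Assumption \ref{finiteproblem} together with the coercive upper comparison). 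Next, for a competitor $u$ with $\|u\|_\infty$ large on a set of positive measure, split $[t_0,T]$ into the sublevel set $A = \{ s : |u(s)| \le K \}$ and its complement. On $A^c$, using $\widetilde{L}(s,x(s),u(s)) \ge L_*(u(s)) \ge L_{\inf}(|u(s)|) \ge L_{\inf}(K)$ and the growth condition \eqref{part1GA}, one shows the cost on $A^c$ exceeds the trivial upper bound once $K$ is chosen large enough — exactly the argument of Lemma \ref{effectiveboundednessofvaluelemma}. Therefore the infimum defining $\widetilde{V}$ is unchanged if we restrict to $\mathcal{U}^{K}$, giving \eqref{effectiveboundednessofvalue2}. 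The discrete identity $\widetilde{V}^{K}_{\tau} = \widetilde{V}_{\tau}$ is handled the same way: for a discrete control $\widetilde{u} \in \mathcal{U}_\tau$ with some $|\widetilde{u}_i| > K$, the corresponding term $\widetilde{L}_{\tau}(t_i,x_i,\widetilde{u}_i) = \tau\,\widetilde{L}(t_i,x_i,\widetilde{u}_i) \ge \tau L_*(\widetilde{u}_i)$ and the same coercivity estimate dominates the uniform upper bound on $\widetilde{V}_\tau$, so such controls are never optimal.

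The main obstacle — really the only thing that needs care — is confirming that the constants appearing in Lemma \ref{effectiveboundednessofvaluelemma}'s proof depend on $L$ only through quantities that are controlled for $\widetilde{L}$: the trivial upper bound must be uniform in $(t_0,x_0)$, and the lower bound must use only $L_{\inf}$, $\gamma$, and $f^{*}$. Since $\widetilde{L} \ge L_*$ pointwise (hence $\widetilde{L}(t,x,u) \ge L_{\inf}(|u|)$) and $\widetilde{L}$ inherits its upper modulus and local boundedness from $L$ through Lemma \ref{effcontlemma} and the estimates of Lemma \ref{Coercivity Lemma}, both requirements hold, and one may even take the same $K$ as in Lemma \ref{effectiveboundednessofvaluelemma}. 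I would therefore state the proof briefly as: ``Assumptions \ref{finiteproblem}--\ref{convexassumption} hold for the pair $(\widetilde{L},f)$ by Lemmas \ref{effcontlemma} and \ref{Coercivity Lemma}; apply Lemma \ref{effectiveboundednessofvaluelemma} to this control problem,'' and then include the short coercivity computation above for completeness.
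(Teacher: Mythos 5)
Your top-level strategy is the paper's: Lemma \ref{Coercivity Lemma} gives $\widetilde{L}_{\tau}(t,x,u)\ge \tau L_{*}(u)$, so the pair $(\widetilde{L},f)$ (and likewise $L_{\tau,\epsilon}$) satisfies the same coercivity and regularity hypotheses as $(L,f)$, and the paper's proof consists of exactly the observation that the argument establishing Lemma \ref{effectiveboundednessofvaluelemma} --- namely the step-function reduction of Lemma \ref{StepfunctionLemma} followed by the bounded-control replacement of Lemma \ref{bdd-control} --- repeats verbatim for $\widetilde{J}$ and $J_{\tau,\epsilon}$. So the sentence ``apply Lemma \ref{effectiveboundednessofvaluelemma} to the control problem $(\widetilde{L},f)$'' is the right proof.

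However, the ``short coercivity computation'' you offer as the substance of that argument is not what the paper does, and as a standalone argument it has a genuine gap. Splitting $[t_0,T]$ into $A=\{s:|u(s)|\le K\}$ and $A^{c}$ and bounding the cost on $A^{c}$ below by $|A^{c}|\,L_{\inf}(K)$ does not let you discard controls that exceed $K$: a near-optimal control may exceed $K$ only on a set of measure of order $1/L_{\inf}(K)^{2}$, in which case the cost on $A^{c}$ is small, no contradiction with the trivial upper bound arises, and yet the control still fails to lie in $\mathcal{U}^{K}$. To prove $\widetilde{V}^{K}=\widetilde{V}$ one must exhibit, for each such near-minimizer, a control in $\mathcal{U}^{K}$ with no larger cost and, crucially, the \emph{same terminal point} (otherwise $\psi(x(T))$ changes). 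That is precisely what the time-change construction of Lemma \ref{bdd-control} provides: the coercivity \eqref{part1GA} shows the displacement accumulated while $|u|>R$ is small relative to the cost incurred, so that stretch of trajectory can be re-traversed at speed $\delta$ with controls in $U^{M}$ while the remainder is slightly sped up so as to arrive at the same point at the same time. Your discrete-case remark is closer to sound (a single $|\widetilde{u}_{i}|>K$ does contribute a full $\tau L_{\inf}(K)$), but even there replacing $\widetilde{u}_{i}$ moves $x_{i+1}$ and all subsequent points, so the paper routes it through the same replacement lemma rather than a pure cost comparison. In short: keep the first and last paragraphs of your proposal, but replace the ``for completeness'' computation by an appeal to Lemma \ref{bdd-control} as extended to $\widetilde{L}$ and $L_{\tau,\epsilon}$ in the remark following its proof.
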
 

Let us analyse the example with $L_{*}(u)=|u|^{\beta} $ and

\begin{equation}
f(x,u) = \frac{ C u} {\sqrt{|u|^{2}+1} }
\end{equation}

We can transform this into a calculus of variations problem on the open ball $B(0,C)$. In particular for a given  control $u(s) \in \Pi_{t_{0}, x_{0},\tau,\widetilde{u}}$ we may rewrite the Lagrangian using the inverse formula \eqref{inverse}

\begin{equation} 
\int_{t_{0}}^{t_{0}+\tau} L_{*}(u(s)) ds = \int_{t_{0}}^{t_{0} + \tau} |u(s)|^{\beta} ds  = \int_{t_{0}}^{t_{0} + \tau} \frac{|v(s)|^{\beta} }{\Big ( \sqrt{1-\frac{v(s)^{2}}{C^{2}} }\Big )^{\beta}}ds.
\end{equation}

It is elementary to verify that the integrand on the RHS is a convex function, provided that $\beta \geq 1$.  Hence we can conclude that

\begin{equation}
\int_{t_{0}}^{t_{0}+\tau} L_{*}(u(s)) ds  \geq  \tau L_{*}(\widetilde{u}). 
\end{equation}

In order to understand the nature of this $\Theta$, consider the case where $f$ and $L_{*}(\cdot)$ is continuously differentiable and that the Jacobian of $f$ is invertible at every point. Then similar to lemmas in classic convex analysis we can prove that the function $\Theta(\cdot)$ is the gradient of $L_{*}(\cdot)$ multiplied by the inverse of the Jacobian. Given the conclusion that

\begin{equation}
(1-\tau) L_{*}(u_{1}) + \tau L_{*}(u_{2}) \geq L_{*} (\widetilde{u}) 
\end{equation}

for $\tau \in [0,1]$ and $u_{1},u_{2}$ such that $(1-\tau) f( x_{0}, u_{1}) + \tau f(x_{0},u_{2}) = f (x_{0},u)$, we can make a brief calculation to deduce that 

%
%
%

\begin{equation}
L_{*}(u_{2}) - L_{*}(u_{1}) \geq \frac{ L_{*}(u_{1}) - L_{*}(u_{1} + \tau J^{-1} (x_{0}, u_{1}) [ f(x_{0},u_{1} ) - f(x_{0},u_{2}) +o(\tau)] )}{\tau} 
\end{equation}

Taking $\tau \rightarrow 0$ implies that

\begin{equation}
L_{*}(u_{2}) - L_{*}(u_{1})  \geq \nabla L_{*}(u_{1}) \cdot [ J^{-1}(x_{0},u_{1}) ( f(x_{0},u_{2}) - f(x_{0},u_{1}) ]
\end{equation}

\noindent Thus we see that this function $\Theta$ is more easily classified then it make at first seem.

In the generality we are interested in, Coercivity of the Homogenized Lagrangian cannot be taken for granted. Suppose that $f$ was not injective, so for some $\widetilde{u}_{1},\widetilde{u}_{2}$ we had that

\begin{equation} \label{non-injective}
f(x_{0},\widetilde{u}_{1}) = f(x_{0},\widetilde{u}_{2}).
\end{equation}

Then $\Pi_{t_{0}, x_{0},\tau,\widetilde{u}_{1}} = \Pi_{t_{0}, x_{0},\tau,\widetilde{u}_{2}}$. 
From the definition of $\widetilde{L}$ this implies that

\begin{equation}
\widetilde{L}(t_{0},x_{0},\widetilde{u}_{1})= \widetilde{L}(t_{0},x_{0},\widetilde{u}_{2}).
\end{equation}

We know that $L_{*}(u) \rightarrow \infty$. 
Hence if equation \eqref{non-injective} was true and $|\widetilde{u}_{1}| \gg |\widetilde{u}_{2}|$ then it would follow

\begin{equation}
L_{*}(\widetilde{u}_{1}) > \widetilde{L}(t_{0},x_{0},\widetilde{u}_{2}) = \widetilde{L}(t_{0},x_{0},\widetilde{u}_{1}).
\end{equation} 

It is then  clear that Lemma \ref{Coercivity Lemma} must be false. Conversely the second statement of Assumption \ref{Growthassumption} implies something similar to injectivity.  Namely if we Assumption \ref{Growthassumption} and   $f(x_{0},\widetilde{u}_{1}) = f(x_{0},\widetilde{u}_{2})$ then this implies

\begin{equation}
L_{*}(\widetilde{u}_{1}) =  L_{*}(\widetilde{u}_{2}).
\end{equation} 

If we have chosen $L_{*}(\widetilde{u}_{1})$ so that it is a strictly increasing function, and $|\widetilde{u}_{1}| \neq |\widetilde{u}_{2}|$ this is a impossible. Hence we see that encoded into Assumption  \ref{Growthassumption} is that non-injectivity of $f$ is heavily restricted.

   \subsection{Convergence of the Value functions}
    
  In this subsection we prove Theorem \ref{maintheorem1}.

 \begin{lemma} \label{epsilonlemma}
 For every $t \in [0,T], x \in \mathbb{R}^{d}$ and almost all $\omega \in \Omega$
  
  \begin{equation}
  V_{\tau,\epsilon} (t,x, \omega) \rightarrow  V_{\epsilon} (t,x, \omega)
  \end{equation}
   uniformly in $\epsilon$ as  $\tau  \rightarrow 0$.
 \end{lemma}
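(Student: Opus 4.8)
The plan is to show the two-sided inequality $|V_{\tau,\epsilon}(t,x,\omega) - V_\epsilon(t,x,\omega)| \le C(\tau)$ with $C(\tau) \to 0$ as $\tau \to 0$, uniformly in $\epsilon$ and locally uniformly in $(t,x)$. By Lemma \ref{effectiveboundednessofvaluelemma} we may work entirely within the compact control set $\mathcal{U}^K$ (resp. $\mathcal{U}^K_\tau$), so all the constants $L^*(K)$, $f^*(K)$, $m_L^{K+\eta(K)}$, $\|H\|_{Lip}^{K+\eta(K)}$, etc., that appear in the technical lemmas are fixed and finite. This effective compactness is what makes every estimate below uniform in $\epsilon$.

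First I would prove $V_{\tau,\epsilon} \ge V_\epsilon - o(1)$. Given a near-optimal $\widetilde u \in \mathcal{U}^K_\tau$ for $V_{\tau,\epsilon}$, each $L_{\tau,\epsilon}(t_i,x_i,\widetilde u_i,\omega)$ is an infimum over $\Pi_{t_i,x_i,\tau,\widetilde u_i}$; pick a near-optimal control on each block, concatenate them into a single $u\in\mathcal{U}^K$ on $[t,T]$, and let $x^u(\cdot)$ be its (true, unfrozen) trajectory. Comparing $J_\epsilon(t,x,u,\omega)$ with $\sum_i L_{\tau,\epsilon}(t_i,x_i,\widetilde u_i,\omega) + \psi(x_N)$: the block-by-block difference is controlled by Lemma \ref{whythisworks} (freezing the dynamics from $(s,x^u(s))$ to $(t_i,x_i)$ costs $O(\tau^2) + \tau\, m_L^{K+\eta(K)}(\tau f^*(K))$ per block, hence $O(\tau) + T\,\epsilon^{-0}$-free terms after summing over $N \sim T/\tau$ blocks), plus the terminal-cost discrepancy $m^\psi$ of order $m^\psi(\tau f^*(K))$ from $x_N$ versus $x^u(T)$ — provided the endpoints are arranged to match, which is where the defining constraint $\int_{t_i}^{t_{i+1}} f(x_i,u(s))\,ds = \tau f(x_i,\widetilde u_i)$ is used together with Assumption \ref{implicitfunctionassumption} to pass between frozen and true trajectories without accumulating microscopic error. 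A subtlety: the concatenated true trajectory need not land exactly on $x_N$, so one either invokes Lemma \ref{Boundedness lemma2}/Assumption \ref{implicitfunctionassumption} to make a small terminal correction of time $O(\tau f^*(K))$ absorbed into $m^\psi$ and $L^*(K)$ terms, or compares against the free-endpoint value function. Either way the total error is a modulus $C(\tau)\to 0$ independent of $\epsilon$, giving $V_\epsilon(t,x,\omega) \le V_{\tau,\epsilon}(t,x,\omega) + C(\tau)$.

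Second I would prove the reverse, $V_\epsilon \ge V_{\tau,\epsilon} - o(1)$. Take a near-optimal $u\in\mathcal{U}^K$ for $V_\epsilon$. By Lemma \ref{StepfunctionLemma} we may assume $f(x^u(s),u(s))$ is a step function, so that on each partition block $[t_i,t_{i+1})$ the true displacement is $x^u(t_{i+1}) - x^u(t_i)$; define $\widetilde u_i$ via Assumption \ref{convexassumption} + \ref{implicitfunctionassumption} (existence of $\widetilde u$ as in \eqref{tildeu}, so that $f(x_i,\widetilde u_i) = (x_{i+1}-x_i)/\tau$ with $x_i := x^u(t_i)$), giving a discrete control $\widetilde u\in\mathcal{U}_\tau$; Lemma \ref{effectiveboundednessofvaluelemma2} (or a priori bounds on near-optimal $u$) keeps $\widetilde u\in\mathcal{U}^K_\tau$. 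The restriction of $u$ to $[t_i,t_{i+1})$ lies in $\widehat\Pi_{t_i,x_i,\tau,\widetilde u_i}$, so $\widehat L_{\tau,\epsilon}(t_i,x_i,\widetilde u_i,\omega) \le \int_{t_i}^{t_{i+1}} L(s,x^u(s),x^u(s)/\epsilon,u(s),\omega)\,ds$; summing and adding $\psi$, then converting $\widehat L_{\tau,\epsilon}$ to $L_{\tau,\epsilon}$ via Lemma \ref{whythisworks} block by block, yields $J_{\tau,\epsilon}(t,x,\widetilde u,\omega) \le J_\epsilon(t,x,u,\omega) + C(\tau)$, hence $V_{\tau,\epsilon}(t,x,\omega) \le V_\epsilon(t,x,\omega) + C(\tau)$.

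The main obstacle is the endpoint/time-budget bookkeeping in the first direction: concatenating the block-optimal controls for $L_{\tau,\epsilon}$ produces a trajectory that respects the frozen displacements but not the true terminal point, and one must show this mismatch is small uniformly in $\epsilon$ — which is exactly why Assumption \ref{implicitfunctionassumption} (a control tracing a given trajectory with only macroscopically-sized perturbation) and the coercivity-driven effective compactness of Lemma \ref{effectiveboundednessofvaluelemma} are both indispensable, and why the error is a genuine modulus $C(\tau)$ rather than something that could secretly depend on $\epsilon^{-1}$. The rest is summation of the per-block estimates from Lemmas \ref{whythisworks} and \ref{StepfunctionLemma} over $N\sim T/\tau$ blocks, each of size $O(\tau^2)$ plus lower-order moduli, which telescopes to $O(\tau) + T\,m_L^{K+\eta(K)}(\tau f^*(K)) + m^\psi(O(\tau)) \to 0$.
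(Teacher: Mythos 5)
Your proposal is correct and follows essentially the same route as the paper: effective compactness via Lemma \ref{effectiveboundednessofvaluelemma}, near-optimal controls in each direction, the frozen-versus-true dynamics comparison of Lemma \ref{whythisworks} applied block by block, and the step-function plus convexity plus implicit-function construction of $\widetilde{u}_i$, with per-block errors of order $\tau^2$ summing to $o(1)$ uniformly in $\epsilon$. The endpoint subtlety you flag is resolved exactly by your first option: since the reparametrised control traces the identical trajectory, one has $x_N=x(T)$ exactly and the terminal costs coincide, so no $m^{\psi}$ correction is needed.
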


 \begin{proof}
 
Similar to the preceding sections,  the argument is based on imitating the trajectory and showing that the difference in cost is small.\\

Let $\kappa > 0 $ be arbitrary, take a $u \in \mathcal{U}$ such that 

\begin{equation}
J_{\epsilon}(t,x,u,\omega) \leq V_{\epsilon} (t,x,\omega) + \kappa.
\end{equation}

Using lemma \ref{effectiveboundednessofvaluelemma} we can without of generality assume that $\|u\|_{\infty} \leq K$. 
Define an associated approximation $\widetilde{u} $ in $\mathcal{U}_{\tau}$ by iterative construction. Set

 \begin{equation}
 t_{1}= t_{0} + \tau  \wedge T
 \end{equation}
 
 \noindent To keep notation consistent we will set $x_{0}:=x(t_{0})$ We will show that there is a $\widetilde{u}_{1}\in U^{K}$ such that

 \begin{equation}
 f(x_{0},\widetilde{u}_{1})=\frac{x(t_{1})-x}{\tau}.
 \end{equation}
 We can, without any loss of generality, suppose that the control is a step function taking values $u_{k}$ at times $  [\bar{t}_{k}, \bar{t}_{k+1}) $  (see \cite{royden1988real} Proposition 22, Page 69). Then 
 
 \begin{equation}
 x(t_{1}) - x(t_{0}) = \sum_{k=1}^{N} \int_{\bar{t}_{k}}^{\bar{t}_{k+1}}  f(x(s),u_{k}) ds.
 \end{equation}
 
\noindent So, stipulating that $\tau >0$ is sufficiently small so that $\eta( K) \leq \tau f^{*}(K)$, we can use Assumption \ref{implicitfunctionassumption} to deduce that there exist $\bar{u}_{k}$ such that $f(x(s),u_{k})=f(x_{0},\bar{u}_{k})$ at each $[\bar{t}_{k} , \bar{t}_{k+1}]$. 
 
 \begin{equation}
 x(t_{1}) - x(t_{0}) = \sum_{k=1}^{N} \int_{\bar{t}_{k}}^{\bar{t}_{k+1}}  f(x_{0},\bar{u}_{k})ds = \sum_{k=1}^{N} [\bar{t}_{k+1}-\bar{t}_{k}] f(x_{0},u_{k}).
 \end{equation}
 
\noindent Divide both sides by $\tau $ and use that fact that the sum of $\bar{t}_{k}-\bar{t}_{k+1}$ is $\tau$ and $f(x_{0},U^{K})$ is a convex set to derive the existence of a $\widetilde{u}_{1}$ such that 

\begin{equation}
 f(x_{0},\widetilde{u}_{1})=\frac{x(t_{1})-x}{\tau}.
 \end{equation}
 
%
%
%
%
  We repeat this procedure and thus define

 \begin{equation} \label{howtogetcontrol1}
   t_{i}:=t_{i-1}+\tau \wedge T
 \end{equation}
 
Similarly there exists $\widetilde{u}_{i}$ such that
\begin{equation} \label{howto getcontrol2}
  f(x_{i},\widetilde{u}_{i})=\frac{x(t_{i})-x_{i-1}}{\tau }
 \end{equation}

\noindent and we define $x_{i}=x(t_{i})$. We terminate on the first $N$ such that $t_{N}=T$.  These $\widetilde{u}_{i}$ then define a $\widetilde{u} $ in $\mathcal{U}_{\tau}$.\\ 
  The approximate effective Lagrangian was defined to be the infinmum of all paths starting at $x_{i}$ and ending at $x_{i+1}$ according to the frozen dynamics $f(x_{i},u)$. Therefore the approximate effective Lagrangian will be smaller than the given control, $u$, over $[t_{i}, t_{i+1})$, except perhaps some small amount occurring from the difference in the dynamics if we trace out the same control, and a difference in a time argument and space argument. This is precisely what occurs in Lemma \ref{whythisworks}. The inequality deduced there implies  (in light of the fact that $t_{i+1}-t_{i} \leq \tau$)
  \begin{multline}
L_{\tau, \epsilon} (t_{i},x_{i},\widetilde{u}_{i},\omega)  - \tau^{2}  \big [ \|L\|_{Lip} + f^{*} (K)  \|L\|_{Lip}^{K+\eta(K)} \|H\|_{Lip}^{K+\eta(K)}  \big ] - \tau m_{L}^{ K+\eta(K)}(\tau f^{*}(K))  \\ \leq \int_{t_{i}}^{t_{i+1}}  L(s,x(s),\frac{x(s)}{\epsilon}, u(s), \omega) ds.
   \end{multline}
 \
  
 \noindent   Summing over $i$  we conclude
  \begin{multline} \label{ConvofV}
J_{\tau, \epsilon}(t_{0},x_{0}, \widetilde{u}, \omega)- T \tau \big [ \|L\|_{Lip} + f^{*} (K)  \|L\|_{Lip}^{K+\eta(K)} \|H\|_{Lip}^{K+\eta(K)}  \big ] - T m_{L}^{ K+\eta(K)}(\tau f^{*}(K) ) \\   \leq V_{\epsilon}(t_{0},x_{0},\omega)+\kappa.
  \end{multline}

   Conversely for a given $\kappa>0$, suppose $\widetilde{u}=\{\widetilde{u}_{i} \}_{i=1}^{N} \in \mathcal{U}_{\tau}$ is such that
   
   \begin{equation}
   J_{\tau,\epsilon}(t,x,\widetilde{u},\omega) \leq V_{\tau, \epsilon} (t,x,\omega) +\frac{\kappa}{2}.
   \end{equation}
   
 \noindent  We know by Lemma \ref{effectiveboundednessofvaluelemma} that we can without loss of generality assume that $|\widetilde{u}_{i}| \leq K$ for every $i$, uniformly in $\tau, \epsilon$. \\\\
\indent Let $u_{i}$ be a sequence of controls such that, for each $i \leq N$, 
     
     \begin{equation}
    \int_{t_{i}}^{t_{i+1}}  L\left(t_{i},x_{i}, \frac{x(s)}{\epsilon} ,u_{i}(s),\omega\right)ds  \leq  \widetilde{L}_{\tau, \epsilon} (t_{i},x_{i},\widetilde{u_{i}}) + \frac{\kappa}{2} (t_{i+1}-t_{i}).
     \end{equation}
We know by Lemma \ref{effectiveboundednessofvaluelemma} that for every $\widetilde{u}_{i}$ there is a $\widetilde{K}(\widetilde{u}_{i})$  such that we may assume $\|u_{i}\|_{\infty} \leq \widetilde{K}(\widetilde{u}_{i})$. However as $\sup_{i \leq N} | \widetilde{u}_{i}| \leq K$ then we may strengthen this to $\sup_{i \leq N} \|u_{i}\|_{\infty} \leq \widetilde{K} (K)=:\widehat{K}$.   

Hence
   \begin{multline} \label{othersideofinequality}
 \widetilde{L}_{\tau,\epsilon} (x_{i},u_{i},\omega)  -  \tau^{2} \big [ \|L\|_{Lip} + f^{*} (K)  \|L\|_{Lip}^{\widehat{K}+\eta(\widehat{K})} \|H\|_{Lip}^{\widehat{K}+\eta(\widehat{K})}  \big ] - \tau m_{L}^{ \widehat{K}+\eta(\widehat{K})}(\tau f^{*}(\widehat{K}) +\eta(\widehat{K})) + \frac{\kappa}{2}  \\ \geq \int_{t_{i}}^{t_{i+1}} L\left(s,x(s),\frac{x(s)}{\epsilon}, u(s), \omega\right)ds.
   \end{multline}

\noindent As $\kappa > 0$ was arbitrary, summing up over $i$ we get the converse of \eqref{ConvofV}. Thus taking  $K^{*}:= \widehat{K} \wedge K$ such that
\begin{align} \label{ThisV} 
& \big | V_{\tau, \epsilon} (t,x, \omega) - V_{\epsilon} (t,x ,\omega ) | \\
& \leq T \tau \big [ \|L\|_{Lip} + f^{*} (K^{*})  \|L\|_{Lip}^{K^{*}+\eta(K^{*})} \|H\|_{Lip}^{K^{*}+\eta(K^{*})}  \big ] + T m_{L}^{ K^{*}+\eta(K^{*})}(\tau f^{*}(K^{*}) )+ \kappa.
\end{align}
 Observing the the RHS is independent of $\epsilon$ completes the proof.
   \end{proof}

We now show that the discrete homogenized control problem converges to the continuous homogenized problem as $\tau \rightarrow 0$.

 \begin{lemma} \label{The lemma that uses step functions} 

For each $t \in [0,T], x \in \mathbb{R}^{d}$
   \begin{equation}
\lim_{\tau \rightarrow 0} \widetilde{V}_{\tau} (t,x)  = \widetilde{V} (t,x) .
     \end{equation}
     
 \end{lemma}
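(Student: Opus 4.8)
The plan is to prove the two inequalities $\widetilde{V}(t,x) \le \liminf_{\tau\to 0}\widetilde{V}_\tau(t,x)$ and $\limsup_{\tau\to 0}\widetilde{V}_\tau(t,x) \le \widetilde{V}(t,x)$ separately, exploiting that both value functions are governed by the \emph{same} effective Lagrangian $\widetilde{L}$ and the same state dynamics $f$, and that $\widetilde{L}$ satisfies (by Lemma \ref{Coercivity Lemma} and the subsequent remark, together with Lemma \ref{effcontlemma}) the same boundedness, coercivity and continuity properties that the original data satisfy. In particular, Lemma \ref{effectiveboundednessofvaluelemma2} lets us restrict attention to controls bounded by a fixed $K$ in both problems, so all moduli of continuity $m_L^K$, $\|L\|_{Lip}$, $f^*(K)$ that appear will be the ones associated to this fixed $K$.

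For the direction $\limsup_{\tau\to0}\widetilde V_\tau(t,x)\le \widetilde V(t,x)$: fix $\kappa>0$ and pick a control $u\in\mathcal U^K$ with $\widetilde J(t,x,u)\le \widetilde V(t,x)+\kappa$. Following the construction in the proof of Lemma \ref{epsilonlemma}, I would invoke Lemma \ref{StepfunctionLemma} (applied to the homogenized control problem, whose data satisfy Assumptions 2, 3, 6 by Lemmas \ref{Coercivity Lemma} and \ref{effcontlemma}) to assume $f(x(s),u(s))$ is a step function, then build $\widetilde u=\{\widetilde u_i\}\in\mathcal U_\tau$ via $f(x_i,\widetilde u_i)=(x(t_i)-x_{i-1})/\tau$ using Assumption \ref{convexassumption} and Assumption \ref{implicitfunctionassumption} exactly as in \eqref{howto getcontrol2}. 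On each interval $[t_i,t_{i+1})$ one compares $\widetilde L_\tau(t_i,x_i,\widetilde u_i)=(t_{i+1}-t_i)\widetilde L(t_i,x_i,\widetilde u_i)$ — which, being the infimum over $\Pi$ with frozen dynamics, is bounded above by the cost of the ``trace-out'' control — with $\int_{t_i}^{t_{i+1}}\widetilde L(s,x(s),u(s))\,ds$, picking up an error controlled by $\tau\,m_L^{K+\eta(K)}(\tau f^*(K)) + \tau^2(\|L\|_{Lip}+\cdots)$ just as in Lemma \ref{whythisworks}; summing over the $\sim T/\tau$ intervals gives a total error that is $O(m_L^{K+\eta(K)}(\tau f^*(K)) + \tau)$, which $\to 0$ as $\tau\to0$. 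Hence $J_\tau(t,x,\widetilde u)\le \widetilde J(t,x,u)+o(1)\le \widetilde V(t,x)+\kappa+o(1)$, and letting $\tau\to0$ then $\kappa\to0$ gives the claim.

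For the reverse direction $\widetilde V(t,x)\le\liminf_{\tau\to0}\widetilde V_\tau(t,x)$: fix $\kappa>0$ and, for each small $\tau$, choose $\widetilde u^\tau=\{\widetilde u^\tau_i\}\in\mathcal U^K_\tau$ with $J_\tau(t,x,\widetilde u^\tau)\le\widetilde V_\tau(t,x)+\kappa$. For each $i$ pick, via the definition of $\widetilde L_\tau$ as an infimum, a near-optimal control $u_i$ on $[t_i,t_{i+1})$ for the frozen problem (with $\|u_i\|_\infty$ bounded by a fixed $\widehat K=\widetilde K(K)$ again by Lemma \ref{effectiveboundednessofvaluelemma2}); concatenate the $u_i$ into a single $u^\tau\in\mathcal U$, noting that the concatenated trajectory $x^\tau(\cdot)$ follows the \emph{true} (not frozen) dynamics, so one must pay the Lemma \ref{whythisworks}-type error for unfreezing $(t,x)$ on each piece, plus use the spatial/temporal modulus of $\widetilde L$ to replace $\widetilde L(t_i,x_i,\cdot)$ by $\widetilde L(s,x^\tau(s),\cdot)$; all these errors again sum to $o(1)$. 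This yields $\widetilde J(t,x,u^\tau)\le J_\tau(t,x,\widetilde u^\tau)+o(1)\le \widetilde V_\tau(t,x)+\kappa+o(1)$, hence $\widetilde V(t,x)\le \widetilde V_\tau(t,x)+\kappa+o(1)$; letting $\tau\to0$ and $\kappa\to0$ finishes the proof.

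\textbf{Main obstacle.} The delicate point is the second (lower) direction: one must check that concatenating the per-interval near-optimal frozen controls $u_i$ actually produces an admissible control for the \emph{true} homogenized dynamics on $[t,T]$ — that the true trajectory $x^\tau(t_i)$ stays close to the nodes $x_i$ so that the unfreezing estimate of Lemma \ref{whythisworks} applies with a uniform constant, and that the accumulated discrepancy between $x^\tau(t_i)$ and $x_i$ does not blow up over the $O(T/\tau)$ steps. Controlling this requires the flexibility built into Assumption \ref{implicitfunctionassumption} (to correct the trajectory back onto the nodes at speed $\delta$, as in the proof of Lemma \ref{contlemma}) and the coercivity of $\widetilde L$ (to keep the corrected controls bounded), exactly the ingredients already assembled above; but verifying that the correction cost is itself $o(1)$ after summation is the part that needs genuine care rather than routine estimation.
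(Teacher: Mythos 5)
Your overall two-inequality structure is right, and your first direction (reduce a near-optimal $u$ for $\widetilde V$ to a step-dynamics control via Lemma \ref{StepfunctionLemma}, build $\widetilde u\in\mathcal U_\tau$ through \eqref{howto getcontrol2}, compare costs interval by interval) is essentially the paper's ``converse'' half. But there is a genuine gap in how you handle the other inequality, and it stems from a misreading of what $\widetilde L_\tau$ is. You treat $\widetilde L_\tau(t_i,x_i,\widetilde u_i)$ as ``the infimum over $\Pi$ with frozen dynamics'' of a cost built from $\widetilde L$, and accordingly propose to extract per-interval near-optimal controls $u_i$, concatenate them, and pay a Lemma \ref{whythisworks}-type unfreezing error. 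That variational representation is never established in the paper and is not the definition: $\widetilde L_\tau$ is the deterministic almost-sure limit of $L_{\tau,\epsilon}$ furnished by the subadditive ergodic theorem, and by \eqref{linear} and \eqref{deffof1L} it equals $\tau\,\widetilde L(t_i,x_i,\widetilde u_i)$. Consequently $J_\tau(t,x,\widetilde u)=\sum_i \tau\,\widetilde L(t_i,x_i,\widetilde u_i)$ is simply a Riemann sum of the effective Lagrangian along the piecewise-linear nodal trajectory; there are no microscopic controls hidden inside it to extract, no concatenation to perform, and no accumulated discrepancy between $x^\tau(t_i)$ and $x_i$ to control. The entire ``main obstacle'' you describe is an artifact of this misidentification, and pursuing your route would first require proving the variational representation of $\widetilde L_\tau$ in terms of $\widetilde L$ --- extra work the lemma does not need.

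The actual argument for $\widetilde V(t,x)\le \widetilde V_\tau(t,x)+o(1)$ is the easy half: given a near-optimal $\widetilde u\in\mathcal U_\tau$ with $|\widetilde u_i|\le K$ (Lemma \ref{effectiveboundednessofvaluelemma2}), use Assumption \ref{implicitfunctionassumption} to define $u\in\mathcal S$ with $f(x(s),u(s))=f(x_i,\widetilde u_i)$ on each $[t_i,t_{i+1})$, so that the continuous trajectory \emph{coincides exactly} with the piecewise-linear discrete one and $|u(s)-\widetilde u_i|\le \tau f^*(K)\|H\|_{Lip}^{K}$. Then $|\widetilde J(t,x,u)-J_\tau(t,x,\widetilde u)|$ is bounded by the uniform modulus of continuity of $\widetilde L$ from Lemma \ref{effcontlemma} (in $t$, $x$ and $u$), summed over $O(T/\tau)$ intervals each contributing $O(\tau)\cdot(\text{modulus at scale }\tau)$, which is $o(1)$. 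Also note that in the direction you did set up correctly, the per-interval error for the finitely many $\tau$-intervals that straddle a jump of the step-dynamics control must be handled separately (they contribute $N\tau L^*(K)$ in total, harmless since $N$ is fixed before $\tau\to0$); your estimate glosses over these.
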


 \begin{proof}  Let $\widetilde{u} = \{ \widetilde{u}_{i} \}_{i=1}^{N} \in \mathcal{U}_{\tau}$, with time associated intervals $[t_{i},t_{i+1})$ and spatial points $\widetilde{x}_{i}$. As usual we can use Lemma \ref{effectiveboundednessofvaluelemma} to assume that  for some $K$ we have $|\widetilde{u}_{i}| \leq K$ for every $i$. We can define a $u \in \mathcal{S}$ such that $f(x_{i},\widetilde{u}_{i}) =f(x(s),u(s)) $ on $ s \in[t_{i},t_{i+1})$. Using Assumption \ref{implicitfunctionassumption} this will satisfy $|\widetilde{u_{i}}-u(s)| \leq \tau f^{*}(K) \|H\|^{K}_{Lip} $.
Then we have that

 \begin{equation} \label{partitionofindia}
 \widetilde{J}(t,x,u) = \int_{t}^{T} \widetilde{L} (s,x(s),u(s))=  \sum_{i=1}^{N} \int_{t_{i}}^{t_{i+1}} \widetilde{L} (s,x(s),u(s))ds.
 \end{equation}
Because 
\begin{equation} 
|\widetilde{L} (s,x(s),u(s)) - \widetilde{L}(t_{i},x_{i},\widetilde{u}_{i}) | \leq \tau \|\widetilde{L}\|_{Lip}^{K+\eta(K)}  \|H\|_{Lip}^{K+\eta(K)} + \tau \|L\|_{Lip} + m_{L}^{K+\eta(K)} (\tau f^{*}(K )) 
\end{equation}
  and by definition of $\widetilde{L}$ in \eqref{deffof1L} we have

  \begin{equation}
 \int_{t_{i}}^{t_{i+1}} \widetilde{L} (t_{i},x_{i},\widetilde{u}_{i})ds = \tau  \widetilde{L} (t_{i},x_{i},\widetilde{u}_{i})ds
 =  \widetilde{L}_{\tau} (t_{i},x_{i},\widetilde{u}_{i}).
 \end{equation}
 Then,
 \begin{equation}
 |\widetilde{J}_{\tau} (t,x,\widetilde{u}) - \widetilde{J}(t,x,u)| \leq \tau \|\widetilde{L}\|_{Lip}^{K+\eta(K)}  \|H\|_{Lip}^{K+\eta(K)} + \tau \|L\|_{Lip} + m_{L}^{K+\eta(K)} (\tau f^{*}(K)  )
 \end{equation}
In particular if given a $\kappa > 0$ and a control $\widetilde{u}$ such that 
 
 \begin{equation}
 \widetilde{J}_{\tau} (t,x,u) \leq \widetilde{V}_{\tau}(t,x) + \kappa
 \end{equation}
then this proves that
 
 \begin{equation}  \label{Lemmathatusesstepfunctiononesideofineq1}
 \widetilde{V}_{\tau}(t,x)  +  \tau \| \widetilde{L}\|_{Lip}^{K+\eta(K)}  \|H\|_{Lip}^{K+\eta(K)} + \tau \|L\|_{Lip} + m_{L}^{K+\eta(K)} (\tau f^{*}(K ))  + \kappa \geq  \widetilde{V}(t,x).
 \end{equation}

Conversely, fix a control $u \in \mathcal{U}$. Without loss of generality we may assume that $u \in \mathcal{S}$. We now define a $\widetilde{u} \in \mathcal{U}_{\tau}$ precisely as in the previous lemma through equations \eqref{howtogetcontrol1}, \eqref{howto getcontrol2} and denote the sequence of controls in $\mathcal{U}$  as $\widetilde{u}_{i}$ and it's time partitions as $[\widetilde{t}_{j},\widetilde{t}_{j+1}]$. If we have for some $j, i$ that $ [\widetilde{t}_{j},\widetilde{t}_{j+1}] \subset [t_{i},t_{i+1}]$, then it follows that
 $|\widetilde{u_{i}}-u(s)| \leq  \tau f(K) \|H\|^{K}_{Lip} $ as $f(x_{i},u_{i})=f(x(s),u(s))$ in this time period, which implies that during this time
 
 \begin{equation} 
 |\widetilde{L} (s,x(s),u(s)) - \widetilde{L}(t_{i},x_{i},u_{i}) | \leq \tau \| \widetilde{L}\|_{Lip}^{K}  \big \|H\|_{Lip}^{K} +\tau \|L\|_{Lip} + m_{L}^{K+\eta(K)} (\tau f^{*}(K) )
 \end{equation}
 
 \noindent We want to argue that the vast majority of $j$ falls within such an $i$. 
 
 Let $N$ be the number of partitions of the control $u$. Then observe that all but $N$ of the $j$ must satisfy $[\widetilde{t}_{j},\widetilde{t}_{j+1}] \subset [t_{i},t_{i+1}]$ and on each $j$ we have that $\widetilde{t}_{j+1}-\widetilde{t}_{j} \leq \tau$. Then
 \begin{equation} \label{stepcostbound}
 |\widetilde{J}(t,x,u) - \widetilde{J}_{\tau} (t,x,\widetilde{u})|  \leq T \tau \big ( \| \widetilde{L}\|_{Lip}^{K}  \big \|H\|_{Lip}^{K} + \|L\|_{Lip} +N L^{*}(K) \big ) + m_{L}^{K+\eta(K)} (\tau f^{*}(K)) 
 \end{equation}
 
 \noindent In particular if we have for a given $\kappa >0$ that 
 
 \begin{equation}
 \widetilde{J}(t,x,u) \leq \widetilde{V}(t,x) +\kappa.
 \end{equation}
 
\noindent Then we deduce that
 \begin{equation} \label{Lemmathatusesstepfunctiononesideofineq2}
 \widetilde{V}(t,x) + T \tau \big ( \| \widetilde{L}\|_{Lip}^{K}  \big \|H\|_{Lip}^{K} + \|L\|_{Lip} +N L^{*}(K) \big ) + m_{L}^{K+\eta(K)} (\tau f^{*}(K)) 
+ \kappa \geq \widetilde{V}_{\tau}(t,x).
 \end{equation}

 \noindent We now take $\tau \rightarrow 0$, as $\kappa$ was arbitrary, this concludes the proof. 
 \end{proof}

   \begin{remark}
 In the proofs of Lemma's \ref{epsilonlemma} and \ref{The lemma that uses step functions}, the terminal cost function plays no role. The discrete trajectory and continuous are always such that $x_{N}=x(T)$. This will also be true in the Lemmas  \ref{StepfunctionLemma} and \ref{effectiveboundednessofvaluelemma} which will be proved later.
  \end{remark}

We can now combine this to prove the main result.


\begin{proof}[Proof of Theorem 1.1] For all $t \in [0,T]$, $x \in \mathbb{R}^{d} $ and almost all $\omega \in \Omega$ we have from the triangle inequality,
 \begin{align} \label{triineq}
&  \big | V_{\epsilon}(t,x, \omega) - \widetilde{V} (t,x) \big | \\
&\quad  \leq | V_{\epsilon} (t,x,\omega) -V_{\tau, \epsilon} (t,x) \big | +  \big | V_{\tau, \epsilon} (t,x)-  \widetilde{V}_{\tau}(t,x) \big |+\big |\widetilde{V}_{\tau} (t,x) - \widetilde{V} (t,x)\big |. 
\end{align}

For the second term, 
 \begin{equation}
  \lim_{\epsilon \rightarrow 0} \big | \widetilde{V}_{\tau, \epsilon} (t,x)-  \widetilde{V}_{\tau}(t,x) \big | =0
 \end{equation}
\noindent  as $L_{\tau, \epsilon}(t_0,x_0,\widetilde{u},\omega) \rightarrow L_{\tau}(t_0,x_0,\widetilde{u})$ for each $(t_0,x_0, \widetilde{u})$ almost surely and $L_{\tau}$ is uniformly continuous.

 Taking $\epsilon \rightarrow 0$ and using Lemma \ref{epsilonlemma}, we have then that, on the event $\Omega^{'}$,
\begin{equation} 
 \limsup_{\epsilon \rightarrow 0} \big | V_{\epsilon}(t,x) - \widetilde{V} (t,x) \big |
  \leq \big | \widetilde{V} (t,x)-  \widetilde{V}_{\tau} (t,x) \big |+\limsup_{\epsilon \rightarrow 0} |V_{\epsilon}(t,x)-V_{\tau, \epsilon}(t,x,\omega)|.
 \end{equation}
 
 \noindent As $\tau$ is arbitrary, we have the result by Lemmas \ref{epsilonlemma}, \ref{The lemma that uses step functions}. Uniform convergence  on compact sets follows from equicontinunity  of $V_{\epsilon}(t,x)$.  \end{proof}

 \subsection{Homogenization of the Hamiltonian Jacobi equation }
  In this subsection we detail the equivalence of homogenizing our control problem, to homogenizing  the associated class of Hamilton Jacobi equation. This thereby proves Theorem \ref{maintheorem2}. \\\\
  
If we define the Hamiltonians
\begin{align}
\mathcal{H} \left(t,x,\frac{x(s)}{\epsilon}, p, \omega\right)
&:=\sup_{v \in \mathbb{R}^{d}} \left\{ - f(x,v) \cdot p - L\left( t,x,\frac{x}{\epsilon}, v, \omega \right) \right\},\\
 \widetilde{\mathcal{H}} (t,x,p)&:= \sup_{v \in \mathbb{R}^{d}} \left\{ - f(x,v) \cdot p - \widetilde{L}(t,x,v) \right\},
\end{align}
then it is a well known fact that the value functions $V_{\epsilon}$ and $\widetilde{V}$ are the unique viscosity solutions to the following Hamilton-Jacobi equations: \footnote{We refer the reader to \cite{fleming2006controlled} page 104 for details. We mention that the continunity of the Lagrangians, state dynamics and effective compactness of the control space are essential ingredients.} 
\begin{align}
-\frac{\partial V_{\epsilon} }{\partial t} +\mathcal{H} \left(t,x,\frac{x(s)}{\epsilon}, D_{x} V_{\epsilon}, \omega\right)&=0, \:\:\:\: V_{\epsilon}(T,x)=\psi(x)\\
-\frac{\partial \widetilde{V}}{\partial t} +\widetilde{\mathcal{H}} (t,x, D_{x} \widetilde{V})&=0 , \:\:\:\: \widetilde{V}(T,x)=\psi(x)
\end{align}

Thus, having proved that $V_{\epsilon} \rightarrow \widetilde{V}$ uniform on compact subsets, Theorem \ref{maintheorem2} is immediate.

\section{Proof of technical lemmas}\label{Tech}

\subsection{Approximation by piecewise constant dynamics} 

\textit{Proof of Lemma \ref{StepfunctionLemma}.} \\ \textbf{Step 1}:  Without loss of generality take $t=0$. Our first step is to prove that we can approximate the value function using controls that are step functions. Fix a $(t,x) \in [0,T] \times \mathbb{R}^{d}$ and a control $u \in \mathcal{U}$. We know from standard analysis (e.g see \cite{royden1988real}, Proposition 22, page 69.) that for any given $\kappa >0$ we can take $\bar{u}$ to be a step function such that, 

\begin{equation} 
|\bar{u}(s)-u(s)| \leq \kappa
\end{equation}
except perhaps on a set of measure less then $ \kappa$ and  $  \|\bar{u}\|_{\infty} \leq \|u\|_{\infty}$. Denoting $\bar{x}(\cdot)$ as the trajectory of the control $\bar{u}(\cdot)$ we have that 
\begin{align}
\big | J(0,x,u)-J(0,x,\bar{u}) \big | &\leq \int_{0}^{\bar{T}} |\bar{L}(s,x(s),u(s))-\bar{L}(s,\bar{x}(s),\bar{u}(s)) |ds\\
 \label{ineqofcontrol} 
&\leq  \int_{0}^{\bar{T}} m_{\bar{L}}^{\|u\|_{\infty}}  (|\bar{x}(s)- x(s)| )ds + \kappa \bar{T} \|L\|^{\|u\|_{\infty}}_{Lip} + \kappa  L^{*}(\|u\|_{\infty}).
\end{align}
We then apply Gronwall's inequality on $|\bar{x}(s)- x(s)|$. Specifically for each $t \in [0, \bar{T}]$,
\begin{align}
|\bar{x}(s) -x(s) | & \leq \int_{0}^{\bar{T}} |\bar{f}(x(r),u(r)) - \bar{f}(\bar{x}(r), \bar{u}(r) ) |dr \\ 
& \leq \|f\|^{\|u\|_{\infty}}_{Lip} \int_{0}^{\bar{T}}  (|\bar{x}(r)-x(r)| + |\bar{u}(r)-u(r)|) dr \\
& \leq  \|f\|^{\|u\|_{\infty}}_{Lip} \int_{0}^{\bar{T}}  (|\bar{x}(r)-x(r)| ) dr + \kappa (T+1).
\end{align}
Therefore $|\bar{x}(s)-x(s) |  \leq  \kappa (\bar{T}+1)\text{exp}(\|f\|^{\|u\|_{\infty}}_{Lip}(T+1)) $ for all $t$ substituting this into  equation \eqref{ineqofcontrol}, we deduce that

\begin{equation}
\big | J(0,x,u)-J(0,x,\bar{u}) \big |  \leq\bar{T} m_{L}^{\|u\|_{\infty}} \big ( \kappa | (\bar{T}+1)\text{exp}(\|f\|^{\|u\|_{\infty}}_{Lip}(\bar{T}+1)) | \big ) +  \kappa \bar{T} \|L\|^{\|u\|_{\infty}}_{Lip} + \kappa  L^{*}(\|u\|_{\infty}).
\end{equation} 

\noindent This completes step one.\\

\noindent\textbf{Step 2}: In this step we use Assumption \ref{implicitfunctionassumption} to show that step one can be converted into the statement about step functions. Let $\bar{u}(s)$ be as above with $\bar{u}(s)= \bar{u}_{i}$ on $[t_{i}, t_{i+1})$. For $ i  \leq N$, define the quantity $\widetilde{v_{i}}$ as

\begin{equation}
 \widetilde{v}_{i} =\frac{x(t_{i+1}) - x(t_{i})}{t_{i+1}-t_{i}} =  \frac{1}{t_{i+1}-t_{i}} \int_{t_{i}}^{t_{i+1}} \bar{f}(x(s),u(s))ds.
\end{equation}
Note that this is the average of the velocity produced by the original control, $u$. Without loss of generality we can take the  time interval of the step function $\bar{u}(s)$ so small so that that $\sup_{i \leq N} | t_{i+1}-t_{i}| f^{*}(\|u\|_{\infty}) \leq \eta(\|u\|_{\infty})$. Then using Assumption \ref{implicitfunctionassumption} and \ref{convexassumption} there exists a control $\widetilde{u}$ such that
 
 \begin{equation}
 \bar{f}(\widetilde{x}(s), \widetilde{u}(s)) = \widetilde{v_{i}}
 \end{equation}
for $s \in [t_{i}, t_{i+1})$, where $\widetilde{x}(s)$ is piecewise linear trajectory from $x(t_{i})$ to $x(t_{i+1})$. We then extend this control to all $[0,\bar{T}]$. Observe in particular that $x(t_{i})=\widetilde{x}(t_{i})$ and thus $\widetilde{x}$ will also satisfy the same terminal conditions (and have identical terminal cost). Then we have that
 
 \begin{align}
|x(t) - \widetilde{x}(t) |  & \leq |\bar{x}(t) - x(t)| + |\widetilde{x}(t)-\bar{x}(t)|  \\
& \leq \|f\|^{\|u\|_{\infty}}_{Lip} \int_{0}^{t} \big (|\bar{x}(s)- \widetilde{x}(s)| +|\bar{u}(s)- \widetilde{u}(s)| \big ) ds + \kappa (\bar{T}+1)  \text{exp}(\|f\|^{\|u\|_{\infty}}_{Lip}(\bar{T}+1)\big ).
 \end{align}
 
 
 \noindent By Gronwall's inequality
 \begin{equation} \label{boundontraject}
 |\bar{x}(t)- \widetilde{x}(t) | \leq  \Big (  \int_{0}^{t} |\bar{u}(s)-\widetilde{u}(s)|ds+ \kappa (\bar{T}+1) \Big ) \exp(2 \|f\|^{\|u\|_{\infty}}_{Lip}  (\bar{T}+1)).
 \end{equation}
Thus we want to acquire a bound on $|\bar{u}(s)-\widetilde{u}(s)|$. But, because $\bar{u}$ is a step function, then we have that, except perhaps on a set of measure less then $\kappa$ (using $|\bar{u}(s) - u(s)| \leq \kappa$),

\begin{equation}
 \big | \bar{f} (x(s),u(s))  - f(\bar{x}(t_{i}), u_{i}) \big | \leq \|f\|_{Lip}^{\|u\|_{\infty}}  |t_{i+1}-t_{i} | (f^{*}(\|u\|_{\infty}) + \kappa)
\end{equation}

This implies that, if we integrate over $[t_{i}, t_{i+1})$ we get 

\begin{equation}
|  \bar{f}(\bar{x}(t_{i}), u_{i}) - \widetilde{v}_{i}  | \leq \|f\|^{\|u\|_{\infty}}_{Lip} ( |t_{i+1}-t_{i} | f^{*}(\|u\|_{\infty} )+\kappa )+ \kappa .
\end{equation}
Thus,
\begin{align}
|   \bar{f} (x(s), u_{i}) - \widetilde{v}_{i} | & \leq \|f\|_{Lip}^{\|u\|_{\infty}}|t_{i+1}-t_{i}| f^{*}(\|u\|_{\infty} )+ |  f(\bar{x}(t_{i}), u_{i}) - \widetilde{v}_{i}  | \\
& \leq  2\|f\|_{Lip}^{\|u\|_{\infty}}|t_{i+1}-t_{i}| f^{*}(\|u\|_{\infty} ) +  \kappa (\|f\|^{\|u\|_{\infty}}_{Lip} +1).
\end{align}
Then we can utilize Assumption \ref{implicitfunctionassumption} to deduce that
\begin{equation}
|\bar{u}(s)-\widetilde{u}(s) | \leq \|H\|_{Lip}^{\|u\|_{\infty}}  \big [ 2\|f\|_{Lip}^{\|u\|_{\infty}}|t_{i+1}-t_{i}| f^{*}(\|u\|_{\infty} )  + \kappa (\|f\|^{\|u\|_{\infty} }+1) ].
\end{equation}

\noindent Choosing these functions such that $\sup_{i} |t_{i+1}-t_{i}| $ and $\kappa$ tends to $0$ and placing them in equation \eqref{boundontraject} and then the equivalent of equation \eqref{ineqofcontrol}  with $\bar{x}$ replaced by $\widetilde{x}$ gives the result. \hfill$\square$\\

An important point is that this lemma applies to the value function $V_{\epsilon}$ as it can be interpreted as the value function associated with the Lagrangian
\begin{equation}
\bar{L}_{\epsilon}(t,x,u):=L\left(t,x,\frac{x}{\epsilon},u,\omega\right)
\end{equation}
which is uniformly continuous for each fixed $\epsilon$. Although its modulus of continuity deteriorates as $\epsilon \rightarrow 0$, this is not an issue. The effect of the deteriorating modulus of continuity is that the step functions will gain in  complexity. However, in the case where we use then for effective boundedness this does not become a problem. In the other case when we use it to prove Lemma \ref{The lemma that uses step functions}, the Lagrangian $\widetilde{L}$ is uniformly continuous.

In particular, what will we do in the proof of Lemmas \ref{effectiveboundednessofvaluelemma} and \ref{Boundedness lemma2}  is, for any control $u_{n} \in \mathcal{S}$, we will take the time intervals $[t_{i}, t_{i+1})$, to be so small that  $|u_{n}(s)-u_{n}(t)| \leq \lambda$ for all $t,s \in [t_{i}, t_{i+1})$ for each $i$, where the term $\lambda$ is as in Assumption \ref{Growthassumption}. Our ability to do this depends on the function $H$ and consequently will depend also on $\|u\|_{\infty}$.

\subsection{Approximation by bounded control}

\begin{lemma}
\label{bdd-control}
Let $[t_0,t_0+h]\subset [t,T]$ and $u\in\mathcal{U}$ be such that
\begin{align}
& f(x(s),u(s))=\sum_{i=0}^N v_{i} 1_{[t_{i}, t_{i+1})}(s),\\
& \max_{0\le j \le N}\sup_{s,r\in [t_j,t_{j+1}]}|u(s)-u(r)|\le \lambda,\label{osc}\\ 
& \int_{t_0}^{t_0+h} L\left(s,x(s), \frac{x(s)}{\epsilon}, u(s)\right)ds \le Wh \label{BOUND}
\end{align}
for some $W>0$, where $\lambda$ is as in Assumption \ref{Growthassumption}. Then for sufficiently large $R>0$ depending only on $W$, there exists a control $u_R\in \mathcal{U}^R$ such that 

\begin{equation}
  \int_{t_0}^{t_0+h}  L\left(s,x_R(s), \frac{x_R(s)}{\epsilon}, u_R(s)\right)ds \leq \int_{t_0}^{t_0+h} L\left(s,x(s), \frac{x(s)}{\epsilon}, u(s)\right)ds
\end{equation}
and $x(t_0+h)=x_R(t_0+h)$. 
\end{lemma}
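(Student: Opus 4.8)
The plan is to construct $u_R$ by "truncating" the given control $u$ on the set where it is large, replacing it there with a control that moves along (roughly) the same trajectory but at bounded speed, and showing that the coercivity built into Assumption~\ref{Growthassumption} makes this replacement only decrease the cost while still reaching $x(t_0+h)$. First I would use the bound \eqref{BOUND} together with $L\ge L_{*}\circ u \ge 0$ (via Lemma~\ref{Coercivity Lemma}'s pointwise ingredient, i.e. $L(t,x,y,u,\omega)\ge L_{*}(u)$) to control the "bad set" $E=\{s\in[t_0,t_0+h] : |u(s)|> R\}$: on $E$ we have $L_*(u(s))\ge L_{\inf}(R)$, so $|E|\, L_{\inf}(R)\le \int_E L_{*}(u(s))\,ds\le Wh$, giving $|E|\le Wh/L_{\inf}(R)$, which is small once $R$ is large (using Assumption~\ref{Growthassumption} part~\eqref{part1GA}, $L_{\inf}(R)\to\infty$). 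Since $u$ is, up to the piecewise-constant-velocity structure \eqref{step-f} and the oscillation bound \eqref{osc}, controlled blockwise, the bad set is a union of subintervals and the displacement accumulated on $E$ is $\int_E f(x(s),u(s))\,ds$, whose length is at most $f^{*}(\cdot)|E|$ — but that is the wrong estimate since $f^{*}$ is evaluated at the large controls; instead I would estimate the displacement on a bad block using the coercivity ratio $\gamma(u)=L_{*}(u)/f^{*}(|u|+\lambda)\to\infty$, so that the speed available, $f^*(|u(s)|+\lambda)$, is dominated by $L_*(u(s))/\gamma(R)$, hence the total bad-block displacement is at most $\gamma(R)^{-1}\int_E L_*(u(s))\,ds\le Wh/\gamma(R)$, which is again small for large $R$.

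Next I would define $u_R$: on the good set (where $|u(s)|\le R$ already, adjusted so that the whole block is good since oscillation is $\le\lambda$, so one can take the threshold slightly above $R$ and keep blocks intact) set $u_R=u$; on each bad block, discard the original control and replace it by a control that (a) first steers, at speed $\delta$ in the direction given by Assumption~\ref{zeroisinrange}/\ref{implicitfunctionassumption} (the $\delta,M$ flexibility of \eqref{hereisdelta}), to realize the \emph{same} net displacement that the original control achieved on that block — this is possible because that displacement is small (bounded by $Wh/\gamma(R)$ summed appropriately, hence $\le\eta(M)$ for $R$ large) — and (b) then sits still using the zero-speed control $u\in U^{\widetilde M}$ from Assumption~\ref{zeroisinrange} for the remaining time in the block. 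This keeps $x_R(t_i^{\text{block end}})=x(t_i^{\text{block end}})$, so by concatenation over all blocks $x_R(t_0+h)=x(t_0+h)$, and $\|u_R\|_\infty\le \max(R, M)=:R'$, which is the desired bounded control (rename $R'$ back to $R$, adjusting constants).

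The cost comparison is the crux. On good blocks the integrands of the two sides agree. On a bad block $[a,b]$: the new cost is at most $(b-a)L^{*}(M)$ (the replacement control lives in $U^{M}$, so $L\le L^{*}(M)$ there), while the old cost is at least $\int_a^b L_{*}(u(s))\,ds \ge \int_a^b L_{\inf}(R)\,ds=(b-a)L_{\inf}(R)$ — wait, more carefully, I want the new cost on the bad part to be dominated by the old cost on the bad part, so I need $(b-a)L^*(M)\le \int_a^b L_*(u(s))\,ds$, which follows if $L_{\inf}(R)\ge L^{*}(M)$, true for $R$ large since $L_{\inf}(R)\to\infty$ and $M$ is a fixed constant depending only on $\widetilde M,\eta$. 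Summing the good-block equality and the bad-block inequality yields $\int L(\cdots u_R\cdots)\le \int L(\cdots u\cdots)$ exactly as claimed, with $R$ depending only on $W$ (through $L_{\inf}(R)\ge L^*(M)$ and $Wh/\gamma(R)\le\eta(M)$ — note $h\le T$ is bounded).

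The main obstacle I anticipate is the simultaneous bookkeeping of \emph{time} and \emph{displacement} on the bad blocks: I need the replacement control to (i) fit in the time budget $(b-a)$ of the block — which requires the needed displacement $\le \delta(b-a)$, i.e. that the available fraction of time for the "steering" phase suffices — and (ii) realize exactly the original net displacement so that trajectories rejoin. If a single bad block is short but the original control produced a displacement on it that exceeds $\delta(b-a)$, the naive construction fails; one resolves this by borrowing time from the \emph{preceding} or following portion of the trajectory (or by noting that, by the coercivity estimate, the total bad displacement is $O(Wh/\gamma(R))$ while the total bad time is $O(Wh/L_{\inf}(R))$, and $\gamma(R)\to\infty$ faster is not automatic — here one uses that $f^*$ is evaluated at $|u|+\lambda$, so on a block the speed is genuinely controlled by $f^*(R+\lambda)\le L_*(R)/\gamma(R)\le L_{\inf}(R)/\gamma(R)$ pointwise). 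The cleanest route is probably to do the truncation block-by-block but allow the steering phase to spill into the zero-control time, proving $|E|\cdot f^*(R+\lambda) \le (\text{small})$ and hence the rejoining is feasible within the allotted time for $R$ large; this is where I would spend the care, and it is exactly the "effective compactness via coercivity" mechanism flagged in Section~1.2.
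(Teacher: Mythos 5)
Your overall strategy is the right one and matches the paper's in spirit (use coercivity to show the ``bad'' set is small in both measure and displacement, replace the control there by a cheap slow control realizing the same net displacement, and compare costs via $L^*(M)\le L_{\inf}(R)$). But the obstacle you flag at the end is precisely the gap, and neither of your proposed fixes closes it. Your block-local construction needs the steering phase to fit inside the block: realizing displacement $(b-a)|v_i|$ at speed $\delta$ takes time $(b-a)|v_i|/\delta$, which fits in $[a,b]$ only if $|v_i|\le\delta$. The hypotheses give no such pointwise speed bound: a very short block carrying an enormous control can have $|v_i|=|f(x(t_i),u(t_i))|$ arbitrarily large while still satisfying $(b-a)L_*(u(\hat t_i))\le Wh$, so only the \emph{product} $(b-a)|v_i|\le Wh/\gamma(R-\lambda)$ is small. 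Your fallback estimate $f^*(R+\lambda)$ does not apply either, because on the bad set $|u(s)|>R$ and the relevant speed is $f^*(|u(s)|+\lambda)$, which is not controlled by $f^*(R+\lambda)$.

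The paper resolves exactly this by a global time change rather than a block-local swap: it treats one bad interval at a time (the last one), proves that the set $\{|u|<N\}$ has measure at least $h/2$ for $N=N(W)$, and then compresses time on that set by a factor $1-\beta$ with $\beta=(t_{j+1}-t_j)|v_j|/(\delta\zeta_N(h))$, thereby freeing exactly the extra time $(t_{j+1}-t_j)|v_j|/\delta$ needed to traverse the bad block at speed $\delta$. This has a second consequence your proposal misses: once time is borrowed, the integrands on the good set no longer agree (the trajectory is time-shifted by up to $O(\beta h+(t_{j+1}-t_j)|v_j|/\delta)$ and the control is perturbed to $H(x,\tfrac{1}{1-\beta}f(x,u))$), so one must estimate these errors via $\|L\|_{Lip}$, $\|L\|_{Lip}^N$ and $\|H\|_{Lip}^N$ and show they are all proportional to $(t_{j+1}-t_j)|v_j|/\delta$, hence dominated by the saved cost $(t_{j+1}-t_j)L_*(u(\hat t_j))$ because $L_*(u(\hat t_j))/|v_j|\ge\gamma(R-\lambda)\to\infty$. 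Finally the procedure is iterated, using that each pass removes the last exceedance without creating new ones. To complete your proof you would need to carry out this time-borrowing construction and the attendant error estimates explicitly; as written, the feasibility of your replacement control and the claimed equality of costs on the good blocks are both unjustified.
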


\begin{proof}

We assume $t_0=0$ without loss of generality. The assumption $L_*(0)>-\infty$ implies that the Lagrangian $L$ is bounded from below. Since we are dealing with a finite time horizon problem, we assume $L\ge 0$ by adding a constant if necessary.

Let $[t_{j},t_{j+1})$ be the last time period such that $\max_{s\in[t_j,t_{j+1}]} |u(s)| \geq R $. If there is no such time period for $R$ sufficiently large, there is nothing to prove. Define $\hat{t}_j: = \text{argmin}_{s \in [t_j, t_{j+1}]} |u(s)|$. By \eqref{osc}, we have $\sup_{s\in[t_j,t_{j+1}]}|u(s)|\le |u(\hat{t}_j)|+\lambda$ and hence
\begin{equation}
|v_j|=|f(x(t_j),u(t_j))|
\le \frac{L_*(|u(\hat{t}_j)|)}{\gamma(|u(\hat{t}_j)|)}
\le \frac{L_*(|u(\hat{t}_j)|)}{\gamma(R-\lambda)}, 
\label{coercivity-v_j}
\end{equation}
where $\gamma$ is defined in Assumption \ref{Growthassumption}. 
We divide the proof into 7 steps. \\

\noindent\textbf{Step 1}: Fix an $N>0$ and set
\begin{equation}
\zeta_N(r):=| \{ s \leq r \colon |u(s)| \leq N \} |.
\end{equation}
If we take $N$ sufficiently large depending on $W$, we have $\zeta_N(h) \geq h/2$. 
\begin{proof}[Proof of Step 1]
Observe that
\begin{equation}
\int_{\{ s \leq r \colon |u(s)| > N \}}L_\epsilon(s; x(s), \frac{x(s)}{\epsilon}, u(s))ds
\ge L_*(N) (h-\zeta_N(h)). \label{step1-lower}
\end{equation}
On the other hand, since $L$ is assumed to be non-negative,  
\begin{equation}
\int_{\{ s \leq r \colon |u(s)| \leq N \}}L\left(s,x(s), \frac{x(s)}{\epsilon},u(s)\right)ds
 \le Wh.\label{step1-upper}
\end{equation}
Combining \eqref{step1-lower} and \eqref{step1-upper}, we arrive at
\begin{equation}
h-\zeta_N(h) \le \frac{Wh}{L_*(N)}  
\end{equation} 
and, since $\lim_{N\to\infty} L_*(N)=\infty$, we are done. 
\end{proof}

\noindent\textbf{Step 2}: We can make
\begin{equation}
\frac{1}{h}\int_{t_{j}}^{t_{j+1}} f(x(s),u(s))ds =(t_{j+1}-t_j)\frac{v_j}{h}
\end{equation}
as small as we wish by choosing $R$ large depending only on $W$. 
\begin{proof}[Proof of Step 2]
By the same way as in the proof of Step 1, we obtain
\begin{equation} 
\begin{split}
 (t_{j+1}-t_j)L_*(|u(\hat{t}_j)|)
 & \le \int_{t_j}^{t_{j+1}} L_{*}(|u(s)|) ds  
\le Wh.
\end{split}
\label{compareineq}
\end{equation}
Using this bound in \eqref{coercivity-v_j}, we obtain
\begin{equation}
(t_{j+1}-t_j)|v_j| \le \frac{Wh}{\gamma(R-\lambda)}
\end{equation}
and thanks to Assumption \ref{Growthassumption}, we are done.
\end{proof}

\noindent\textbf{Step 3}: In this step, we construct a time-change function. For $N<R-\lambda$ and $\beta>0$, define 
\begin{equation}
\rho(s)=\int_0^s \left(1_{r\not\in[t_j,t_{j+1})}-\beta 1_{u(r)<N}
 +\frac{|v_j|}{\delta}1_{r\in[t_j,t_{j+1})}\right)dr.
 \end{equation}
 
 If $s \geq t_{j+1}$ this is equal to
 
 \begin{equation}
 s-\beta\zeta_N(s)+(t_{j+1}-t_j)\frac{|v_j|}{\delta}.
\end{equation}

We choose 
\begin{equation}
\beta=(t_{j+1}-t_j)\frac{|v_j|}{\delta\zeta_N(h)}
\label{def-beta}
\end{equation}
so that $\rho(h)=h$. By Steps 1 and 2, we can make $\beta$ as small as we wish by choosing $R>0$ large depending only on $W$ and we always assume $\beta<1$. Our time-change $\sigma$ is defined as the inverse function of $\rho$. Note that $\rho(h)=h$ implies $\sigma(h)=h$. Noting also that $\min_{r\in[t_j,t_{j+1})}|u(r)|\ge R-\lambda \ge N$, we have 
\begin{equation}
\begin{split}
\frac{d\sigma}{ds}(s)&=\left(\frac{d\rho}{ds}(\sigma(s))\right)^{-1}\\
& = \frac{1_{\sigma(s)\not\in[t_j,t_{j+1})}}{1-\beta 1_{u(\sigma(s))<N}}
+\frac{\delta}{|v_j|}1_{\sigma(s)\in[t_j,t_{j+1})}.
\end{split}
\label{sigma'}
\end{equation}
The time-changed trajectory $x\circ\sigma$ has the velocity
\begin{equation}
\begin{split}
\frac{d x(\sigma(s))}{ds}
&=\frac{dx}{ds}(\sigma(s))\frac{d\sigma}{ds}(s)\\
&= f(x(\sigma(s)),u(\sigma(s)))\frac{d\sigma}{ds}(s),
\end{split}
\end{equation}
which means that we speed up by the factor $(1-\beta)^{-1}$ when 
$|u(\sigma(s))|<N$ and change the speed to $\delta$ when $\sigma(s)\in [t_j,t_{j+1})$.\\

\noindent\textbf{Step 4}: When $R>0$ is sufficiently large depending on $W$, there exists a control $\bar{u}\in \mathcal{U}$ which satisfies the following: 
\begin{enumerate}
 \item The associated trajectory is $x\circ\sigma$, that is, 
\begin{equation}
 f(x(\sigma(s)),\bar{u}(s))=f(x(\sigma(s)),u(\sigma(s)))\frac{d\sigma}{ds}(s), 
\label{changed-velocity}
\end{equation}
 \item when $\sigma(s)\in[t_j,t_{j+1})$, $|\bar{u}(s)|\le M$ ,
 \item when $|u(\sigma(s))|<N$, 
\begin{equation}
|u(\sigma(s))-\bar{u}(s)|\le \frac{\beta}{1-\beta} \|H\|_{Lip}^N f^*(N)
\end{equation}
and in particular $|\bar{u}(s)|\le R$.
\end{enumerate}
\begin{remark}\label{no-new-exceedance}
By the second condition, the control is made small on the last interval where it exceeded $R$. The third condition ensures that we have not created a new point where the control exceeds $R$, i.e., $|\bar{u}|>R$ only on $\bigcup_{i<j}\sigma^{-1}([t_i,t_{i+1}))$. 
\end{remark}

\begin{proof}[Proof of Step 4]
First, when $\sigma(s)\not\in[t_j,t_{j+1})$ and $|u(\sigma(s))|\ge N$, we have 
$\frac{d\sigma}{ds}(s)=1$ and we can take $\bar{u}(s)=u(\sigma(s))$. 

Second, when $\sigma(s)\in[t_j,t_{j+1})$, since the right hand side is on $\delta\mathbb{S}^d$, we can use \eqref{hereisdelta} to find $\bar{u}(s)\in U^M$ satisfying~\eqref{changed-velocity}. 
Since $M$ is the constant fixed in \eqref{hereisdelta}, we have $|\bar{u}(s)|\le R$ by choosing $R>M$. 

Finally, when $\sigma(s)\not\in[t_j,t_{j+1})$ and $|u(\sigma(s))|\le N$, the right hand side of~\eqref{changed-velocity} is a small perturbation of $f(x(\sigma(s)),u(\sigma(s)))$. Hence we can use Assumption~6 to find $\bar{u}(s)\in U$ satisfying~\eqref{changed-velocity}: 
specifically, the $R$ in Assumption~\ref{implicitfunctionassumption} is set to be $N$ and then we take our $R$ so large (i.e., $\beta$ small) that 
\begin{equation}
\frac{1}{1-\beta}f(x(\sigma(s)),u(\sigma(s)))\in B_{\eta(N)}(f(x(\sigma(s)),u(\sigma(s)))). 
\end{equation}
Then we can define the desired control as 
\begin{equation}
 \bar{u}(s)=H\left(x(\sigma(s)), \frac{1}{1-\beta}f(x(\sigma(s)),u(\sigma(s)))\right).
\end{equation}
Recalling the identity $H(x(\sigma(s)), f(x(\sigma(s)),u(\sigma(s))))=u(\sigma(s))$ and the Lipschitz continuity of $H$ in Assumption~\ref{implicitfunctionassumption}, we have 
\begin{equation}
\begin{split}
 |u(\sigma(s))-\bar{u}(s)|
&\le \|H\|_{Lip}^N\left(\frac{1}{1-\beta}-1\right)
 |f(x(\sigma(s)),u(\sigma(s)))|\\
& \le \|H\|_{Lip}^N\frac{\beta}{1-\beta} f^*(N).
\end{split}
\end{equation}
Taking $R$ large makes $\beta$ small and we can conclude $|\bar{u}(s)|\le R$. 
\end{proof}

\noindent\textbf{Step 5}: 
The following hold:
\begin{align}
 &|\{s\colon \sigma(s)\in [t_j,t_{j+1})\}|
 =(t_{j+1}-t_j)\frac{|v_j|}{\delta},\\
 &\sup_{s\in[0,h]}|\sigma(s)-s| 
 \le \frac{\beta}{1-\beta} h+(t_{j+1}-t_j)\frac{|v_j|}{\delta}. 
 \label{mod-sigma}
\end{align}
\begin{proof}[Proof of Step 5]
The first claim is a consequence of the fact that $x\circ\sigma$ travels the distance $(t_{j+1}-t_j)|v_j|$ in speed $\delta$. 
For the proof of the second claim, by \eqref{sigma'}, we know that $\frac{d\sigma}{ds}(s)\neq 1$ only if $u(\sigma(s))<N$ or $\sigma(s)\in [t_j,t_{j+1})$. Therefore, 
\begin{equation}
\begin{split}
|\sigma(s)-s| &\le \int_0^s \left|\frac{d\sigma}{ds}(s)-1\right|ds\\
&\le \left(\frac{1}{1-\beta}-1\right)\zeta_N(h)
 + \left|\frac{\delta}{|v_j|}-1\right||\{s\colon \sigma(s)\in [t_j,t_{j+1})\}|.
\end{split}
\label{sigma-s}
\end{equation}
Substituting the first claim and $\zeta_N(h)\le h$, we obtain~\eqref{mod-sigma}.
\end{proof}

\noindent\textbf{Step 6}: When $R>0$ is sufficiently large depending on $W$, 
\begin{equation}
\int_{0}^{h} L(s,x(s),u(s))ds 
\ge \int_{0}^{h} L(s,\bar{x}(s),\bar{u}(s))ds.
\end{equation}

\begin{proof}[Proof of Step 6]
We assume $\frac{\beta}{1-\beta}<2\beta$ by choosing $R>0$ large enough. 
The left hand side is bounded from below by
\begin{equation}
 (t_{j+1}-t_j) L_{*}(u(\hat{t}_j)) 
+ \int_{0}^{t_{j}} L(s,x(s),u(s)) ds
+ \int_{t_{j+1}}^{h} L(s,x(s),u(s)) ds. 
\label{LHS}
\end{equation}
Making the substitution $s=\sigma(r)$, recalling $x(\sigma(r))=\bar{x}(r)$ and noting that $\sigma'(r) \ge 1$ on the above two domains of integration, we can bound the second term from below by
\begin{equation} \label{hithere1}
 \int_{0}^{\sigma^{-1}(t_{j})}  L(\sigma(r),\bar{x}(r),u(\sigma(r)))\sigma'(r) dr
 \geq \int_{0}^{\sigma^{-1}(t_{j})} L(\sigma(r),\bar{x}(r),u(\sigma(r)))dr
\end{equation}
Applying the same argument to the third term in \eqref{LHS}, we find that the sum of two integrals in \eqref{LHS} is bounded from below by 
\begin{equation}
 \int_{[0,\sigma^{-1}(t_j))\cup[\sigma^{-1}(t_{j+1}),h]} L(\sigma(r),\bar{x}(r),u(\sigma(r)))dr.
\end{equation}
Using Steps 4 and 5, we can evaluate the error of replacing $\sigma(r)$ and $u(\sigma(r))$ by $r$ and $\bar{u}(r)$ in this integral and obtain
\begin{equation}
\begin{split}
&\int_{[0,\sigma^{-1}(t_j))\cup[\sigma^{-1}(t_{j+1}),h]} L(\sigma(r),\bar{x}(r),u(\sigma(r)))dr\\
&\quad \ge 
\int_{[0,\sigma^{-1}(t_j))\cup[\sigma^{-1}(t_{j+1}),h]} L(r,\bar{x}(r),\bar{u}(r))dr\\
&\qquad
-h\|L\|_{Lip}\left(2\beta h+(t_{j+1}-t_j)\frac{|v_j|}{\delta}\right)
-h\|L\|_{Lip}^N \times 2\beta \|H\|_{Lip}^N f^*(N)\\
&\quad =\int_{[0,\sigma^{-1}(t_j))\cup[\sigma^{-1}(t_{j+1}),h]} L(r,\bar{x}(r),\bar{u}(r))dr
-c(W)(t_{j+1}-t_j)\frac{|v_j|}{\delta},
\end{split} 
\end{equation}
where in the last line we have used the definition \eqref{def-beta} of $\beta$ and Step 1. (Recall that $N$ depends only on $W$.)
On the remaining interval $[\sigma^{-1}(t_j),\sigma^{-1}(t_{j+1}))$, we have $|\bar{u}|\le M$ by Step 4 and hence combining with Step 5,  
\begin{equation}
 \int_{[\sigma^{-1}(t_j),\sigma^{-1}(t_{j+1}))} L(r,\bar{x}(r),\bar{u}(r))dr
\le L^*(M)(t_{j+1}-t_j)\frac{|v_j|}{\delta}.
\end{equation}
Substituting all the above estimates to \eqref{LHS}, we find that 
\begin{equation}
\begin{split}
\int_{0}^{h} L(s,x(s),u(s))ds 
& \ge \int_0^h L(r,\bar{x}(r),\bar{u}(r))dr\\
&\quad
 + (t_{j+1}-t_j) \left(L_{*}(u(\hat{t}_j)) 
 -(c(W)+L^*(M))\frac{|v_j|}{\delta}\right).
\end{split}
\end{equation}
By \eqref{coercivity-v_j} and Assumption \ref{Growthassumption}, the second term on the right hand side is positive for sufficiently large $R>0$ depending only on $W$.
\end{proof}

\noindent\textbf{Step 7}: We can apply the above procedure to $\bar{u}$ again and continue recursively. As we noticed in Remark \ref{no-new-exceedance}, this procedure always decreases the number of intervals where the control exceeds $R$. Therefore, after finitely many steps, we end up with a new control $u_R\in \mathcal{U}^R$ that gives a smaller cost than the original one.
\end{proof}

\begin{remark}
As we proved in section $2.5$, the same estimates on the homogenized control problem as well as the discrete problems are applicable. Hence we can easily extend Lemma \ref{bdd-control} to control problems with Lagrangians $\widetilde{L}, L_{\tau,\epsilon}$. 
\end{remark}

\noindent Using Lemma \ref{bdd-control} we can now readily prove Lemmas \ref{effectiveboundednessofvaluelemma},  \ref{Boundedness lemma2} and \ref{effectiveboundednessofvaluelemma2}.\\

\begin{proof}[Proof of Lemma \ref{Boundedness lemma2}] For the object $\widehat{L}_{\tau, \epsilon}(t_{0},x_{0},\widetilde{u},\omega) $,  if we consider the control $u(s)$ such that $f(x(s),u(s))=f(x_{0},\widetilde{u})$,  then we conclude that

\begin{equation}
\widehat{L}_{\tau, \epsilon}(t_{0},x_{0},\widetilde{u},\omega)  \leq h L^{*}(|\widetilde{u}|+\eta(|\widetilde{u}|)).
\end{equation}
by taking $W= L^{*}(|\widetilde{u}|+\eta(|\widetilde{u}|))$. If $u_{n}$ is a sequence of controls such that

\begin{equation}
\lim_{n \rightarrow \infty} \int_{t_{0}}^{t_{0}+h} L(t_{0},x_{0}, \frac{x(s)}{\epsilon}, u_{n}(s))ds = \widehat{L}_{\tau, \epsilon}(t_{0},x_{0},\widetilde{u},\omega)\leq h L^{*}(|\widetilde{u}|+\eta(|\widetilde{u}|))
\end{equation}
which we can without loss of generality take $u_{n} \in \mathcal{S}$ by Lemma \ref{StepfunctionLemma}, then by Lemma \ref{bdd-control} we find a sequence of controls $\bar{u}_{n}(s)$ such that $\bar{u}_{n}(s) \in \mathcal{U}^{K_{1}}$ for some $K_{1}$ depending only on $L^{*}(|\widetilde{u}|+\eta(|\widetilde{u}|))$ and 
 
 \begin{equation}
 \int_{t_{0}}^{t_{0}+h} L(t_{0},x_{0}, \frac{x(s)}{\epsilon}, \bar{u}_{n}(s))ds \leq \int_{t_{0}}^{t_{0}+h} L(t_{0},x_{0}, \frac{x(s)}{\epsilon}, u_{n}(s))ds 
 \end{equation}
  and therefore we conclude
 
 \begin{equation}
 \widehat{L}_{\tau, \epsilon}(t_{0},x_{0},\widetilde{u},\omega) =  \widehat{L}^{K1}_{\tau, \epsilon}(t_{0},x_{0},\widetilde{u},\omega).
 \end{equation}
  \\
  It is trivial to repeat the proof of Lemma \ref{bdd-control} for the frozen dynamics $f(x_{0},\widetilde{u})$. Then, taking $W=L^{*}(|\widetilde{u}|)$ we can show for some $K_{2}$

 \begin{equation}
L_{\tau, \epsilon}(t_{0},x_{0},\widetilde{u},\omega)=  L^{K_{2}}_{\tau, \epsilon}(t_{0},x_{0},\widetilde{u},\omega)
 \end{equation}
 
\noindent  Taking $K=K_{1} \wedge K_{2}$ and observing that this $K$ depends only on $\widetilde{u}$ completes the proof. \end{proof}

\begin{proof}[Proof of Lemma \ref{effectiveboundednessofvaluelemma} and \ref{effectiveboundednessofvaluelemma2}]:  
Take a sequence of controls $u_{n} \in \mathcal{S}$ such that 

\begin{equation}
\lim_{n \rightarrow \infty} J_{\epsilon}(t,x,u_{n},\omega) = V_{\epsilon}(t,x).
\end{equation}

\noindent Clearly there exists an $M>0$ such that $J_{\epsilon}(t,x,u_{n},\omega) \leq M$ for all $n$. Therefore we can conclude that there exists a $K> 0$ and sequence of controls $\bar{u}_{n} \in \mathcal{U}^{K}$ such that

\begin{equation}
 J_{\epsilon}(t,x,\bar{u}_{n},\omega) \leq  J_{\epsilon}(t,x,u_{n},\omega)  
\end{equation}

\noindent in part owing to the fact that $x_{n}(T)= \bar{x}_{n}(T)$ and hence $\psi( x_{n}(T))=\psi( \bar{x}_{n}(T))$.  Thus

\begin{equation}
V^{K}(t,x,\omega) \leq \lim_{n \rightarrow \infty} J_{\epsilon}(t,x,\bar{u}_{n},\omega) = V(t,x,\omega)
\end{equation}

The reverse inequality is trivial and hence this completes the proof of Lemma \ref{effectiveboundednessofvaluelemma}. Adopting the same argument for $\widetilde{J}$ and $J_{\tau,\epsilon}$ we can immediately obtain Lemma \ref{effectiveboundednessofvaluelemma2}
\end{proof}

\section{Acknowledgments}

This research was supported by  a Monbukagakusho scholarship administered by the Japanese Ministry of Education, Culture, Sports, Science and Technology. I would also like to thank Ryoki Fukushima for his helpful insight and guidance.

\bibliography{Homogenization_of_some_determinstic_control_problem}

\begin{thebibliography}{10}

\bibitem{fleming2006controlled}
W.~Fleming and H.~Soner, {\em Controlled Markov Processes and Viscosity
  Solutions}.
\newblock Stochastic Modelling and Applied Probability, Springer New York,
  2006.

\bibitem{lions1986homogenization}
P.-L. Lions, G.~Papanicolaou, and S.~R. Varadhan, ``Homogenization of
  hamilton-jacobi equations,'' {\em unpublished}, 1986.

\bibitem{Ish99}
H.~Ishii, ``Almost periodic homogenization of {Hamilton}-{Jacobi} equations,''
  {\em International Conference on Differential Equations}, vol.~1, 09 2000.

\bibitem{Eva89}
L.~C. Evans, ``The perturbed test function method for viscosity solutions of
  nonlinear {P}{D}{E},'' {\em Proceedings of the Royal Society of Edinburgh:
  Section A Mathematics}, vol.~111, no.~3-4, p.~359–375, 1989.

\bibitem{LS03}
P.-L. Lions and P.~Souganidis, ``Correctors for the {Homogenization} of
  {Hamilton}-{Jacobi} {Equations} in the {Stationary} {Ergodic} {Setting},''
  {\em Communications on Pure and Applied Mathematics}, vol.~56, pp.~1501 --
  1524, 10 2003.

\bibitem{souganidis1999stochastic}
P.~E. Souganidis, ``Stochastic homogenization of {Hamilton}--{Jacobi} equations
  and some applications,'' {\em Asymptotic Analysis}, vol.~20, no.~1,
  pp.~1--11, 1999.

\bibitem{Rezakhanlou2000}
F.~Rezakhanlou and J.~E. Tarver, ``Homogenization for {Stochastic}
  {Hamilton}-{Jacobi} {Equations},'' {\em Archive for Rational Mechanics and
  Analysis}, vol.~151, pp.~277--309, Apr 2000.

\bibitem{Zil17}
B.~Ziliotto, ``Stochastic {Homogenization} of {Nonconvex} {Hamilton}-{Jacobi}
  {Equations}: {A} {Counterexample},'' {\em Communications on Pure and Applied
  Mathematics}, 12 2015.

\bibitem{feldman2017homogenization}
W.~M. Feldman and P.~E. Souganidis, ``Homogenization and {Non}-{Homogenization}
  of certain {Non}-{Convex} {Hamilton}-{Jacobi} {Equations},'' {\em Journal de
  Math{\'e}matiques Pures et Appliqu{\'e}es}, 2017.

\bibitem{lions2005homogenization}
P.-L. Lions and P.~E. Souganidis, ``Homogenization of “viscous”
  {Hamilton}--{Jacobi} equations in stationary ergodic media,'' {\em
  Communications in Partial Difference Equations}, vol.~30, no.~3,
  pp.~335--375, 2005.

\bibitem{CPA:CPA20137}
E.~Kosygina, F.~Rezakhanlou, and S.~R.~S. Varadhan, ``Stochastic homogenization
  of {Hamilton}-{Jacobi}-{Bellman} equations,'' {\em Communications on Pure and
  Applied Mathematics}, vol.~59, no.~10, pp.~1489--1521, 2006.

\bibitem{JST17}
W.~Jing, P.~E. Souganidis, and H.~V. Tran, ``Stochastic homogenization of
  viscous superquadratic {Hamilton}--{Jacobi} equations in dynamic random
  environment,'' {\em Research in the Mathematical Sciences}, vol.~4, p.~6, Jan
  2017.

\bibitem{a.w.j.stoddart1967}
A.~Stoddart, ``Existence of optimal controls,'' {\em Pacific Journal Of
  Mathematics}, vol.~20, no.~1, 1967.

\bibitem{a.f.filippov1962}
A.~Filippov, ``On {Certain} {Questions} in the {Theory} of {Optimal}
  {Control},'' {\em Journal of the Society for Industrial and Applied
  Mathematics Series A Control}, vol.~1, no.~1, 1962.

\bibitem{roxin1962}
E.~Roxin, ``The existence of optimal controls.,'' {\em Michigan Math. J.},
  vol.~9, no.~2, pp.~109--119, 1962.

\bibitem{JohnBall1985}
J.~Ball and V.~J.~Mizel, ``One-dimensional {Variational} {Problems} whose
  {Minimizers} do not {Satisfy} the {Euler}-{Lagrange} {Equation},'' {\em
  Archive for Rational Mechanics and Analysis}, vol.~90, pp.~325--388, 12 1985.

\bibitem{krengel1985ergodic}
U.~Krengel and A.~Brunel, {\em Ergodic Theorems}.
\newblock De Gruyter Studies in Mathematics, De Gruyter, 1985.

\bibitem{royden1988real}
H.~Royden, {\em Real Analysis}.
\newblock Mathematics and statistics, Macmillan, 1988.

\end{thebibliography}

\end{document}